\newtheorem{thm}{Theorem}[section]
\newtheorem{lem}[thm]{Lemma}
\newtheorem{prop}[thm]{Proposition}
\newtheorem{cor}[thm]{Corollary}
\theoremstyle{definition}
\newtheorem{rmk}{Remark}
\newtheorem{defn}{Definition}
\providecommand{\coker}{\operatorname{coker}}
\providecommand{\Ext}{\operatorname{Ext}}
\providecommand{\holim}{\operatorname{holim}}
\providecommand{\deg}{\operatorname{deg}}
\providecommand{\ZLOC}{\mathbb Z_{(3)}}
\providecommand{\QTWO}{Q(2)_{(3)}}
\newcommand{\Z}{\mathbb Z}
\newcommand{\N}{\mathbb N}
\providecommand{\LMAX}{\ell^{0,m}}
\providecommand{\LMAXB}{\ell^{1,m}}
\providecommand{\HOT}{\textrm{lower order terms}}
\providecommand{\BMF}{\mathcal B_{TMF}}
\providecommand{\floor}[1]{\left\lfloor#1
\right\rfloor}
\providecommand{\ceil}[1]{\left\lceil#1
\right\rceil}
\providecommand{\psitwo}{\psi_{[2]}}
\begin{document}


\title{On the Bousfield-Kan spectral sequence
for $\QTWO$}

\author{Donald M.\ Larson\thanks{Penn State
Altoona, 3000 Ivyside Park, Altoona PA, 16601; 
\texttt{dml34@psu.edu}}}

\maketitle

\begin{abstract}
We compute the $E_2$-term
of the Bousfield-Kan spectral sequence
converging to the homotopy groups
of the semi-cosimplicial $E_{\infty}$
ring spectrum $\QTWO$.
This $3$-local
spectrum was constructed by M.\ Behrens 
using degree 2 isogenies of elliptic curves,
and its localization
with respect to the 
2nd Morava $K$-theory $K(2)$
is ``one half'' of the
$K(2)$-local sphere at the prime 3.
The computation in this paper uses
techniques developed in 
the author's previous work 
\cite{Larson:ANSS} on the Adams-Novikov spectral 
sequence for $\QTWO$, and provides
another gateway to the homotopy ring $\pi_*\QTWO$.
\end{abstract}

\section{Introduction}\label{Introduction}
 
In his work on the $K(2)$-local sphere,
Behrens (\cite{Beh:Mod,Beh:Bldg}) 
constructs a $p$-local $E_{\infty}$
ring spectrum $Q(N)_{(p)}$ for each prime $p$ and
positive integer $N$ not divisible by $p$.
For fixed $p$ and $N$, this spectrum
is the homotopy limit of 
a semi-cosimplicial diagram $Q(N)_{(p)}^{\bullet}$ 
of the form
\begin{equation}\label{bullet}
Q(N)_{(p)}^{\bullet}:\quad 
TMF_{(p)}\Rightarrow 
TMF_0(N)_{(p)}\vee TMF_{(p)}\Rrightarrow
TMF_0(N)_{(p)}
\end{equation}
where $TMF_{(p)}$ and $TMF_0(N)_{(p)}$ are the 
$p$-localizations
of topological modular forms
and its analog for
$\Gamma_0(N)\subset\operatorname{SL}_2(\Z)$,
respectively.  The arrows in \eqref{bullet} 
denote alternating sums of two (resp.\ three)
coface maps, each defined
in terms of degree $N$ isogenies
of elliptic curves.

In the case $p=3$ and $N=2$,
the $K(2)$-localization
of \eqref{bullet} is a
reinterpretation of the
$K(2)$-local sphere resolution constructed by
Goerss, Henn, Mahowald, and Rezk
\cite{GHMR}.  Moreover, Behrens
(\cite{Beh:Mod}, Theorem 2.0.1) has shown that
\begin{equation}\label{cofiberseq}
DL_{K(2)}\QTWO\xrightarrow{DL_{K(2)}\eta}
L_{K(2)}S
\xrightarrow{L_{K(2)}\eta}L_{K(2)}\QTWO
\end{equation}
is a cofiber sequence, where $L_{K(2)}$ is
Bousfield localization with respect to $K(2)$,
$\eta$ is the unit map of $\QTWO$,
and $D$ is the $K(2)$-local Spanier-Whitehead
duality functor.  Therefore, the $K(2)$-local sphere
decomposes in terms of $L_{K(2)}\QTWO$ and 
$DL_{K(2)}\QTWO$, and the homotopy groups of these
three spectra are intertwined.  
The analog of \eqref{cofiberseq}
for $p=5$ and $N=2$
is known to be a cofiber sequence,
and is conjectured to be so
for all primes $p$ and corresponding $N$ (\cite{Beh:Bldg}, Conjecture 1.6.1).
The case $p=2$ is addressed by Behrens and
K.\ Ormsby in \cite{Behrens:Ormsby}.

In this paper,
we leverage the cosimplicial structure of 
$\QTWO$ 
and compute the $E_2$-term of the Bousfield-Kan
spectral sequence (BKSS) converging
to $\pi_*\QTWO$. 
We shall denote the $E_2$-term
of this spectral sequence by 
$\,_{BK}E_2^{s,t}\QTWO$.  
Our computation gives explicit descriptions
of the elements in this $E_2$-term
up to an ambiguity in a torsion $\ZLOC$-module
which we denote $U^*\subset\,_{BK}E_2^{1,*}\QTWO$.
Throughout this paper, $\nu_3(x)$ will
denote the 3-adic valuation of a (3-local)
integer $x$.
The following
is our main theorem.
\begin{thm}\label{main}
The Bousfield-Kan $E_2$-term for $\QTWO$ is
given by
\[
\,_{BK}E_2^{0,t}\QTWO=
\begin{cases}
\ZLOC,& t=0,\\
(\pi_tTMF_{(3)})_T, & t\neq0,
\end{cases}
\]
$\,_{BK}E_2^{1,t}\QTWO=(\pi_tTMF_{(3)})_T\oplus M^1$ where
\[M^1=
\begin{cases}
{\displaystyle \bigoplus_{n\in\N}(\ZLOC\oplus\Z/(3)),}&
t=0,\\
{\displaystyle\bigoplus_{n\in\N}\Z/(3)},&
t=4m, m<0,\\
\Z/(3^{\nu_3(3m)})\oplus
{\displaystyle\left(\bigoplus_{n\in\N}\Z/(3)\right)}, &
t=4m, m>0,\\
{\displaystyle\bigoplus_{n\in\N}\Z/(3)}, & t=4m+2, m\leq0,\\
U^t\oplus
{\displaystyle\left(\bigoplus_{n\in\N}\Z/(3)\right)},& t=4m+2, m\geq1, m\equiv13\ (27),\\
\Z/(3^{\nu_3(6m+3)})\oplus
{\displaystyle\left(\bigoplus_{n\in\N}\Z/(3)\right)}, & t=4m+2, m\geq1, m\nequiv13\ (27),\\
0, &\text{otherwise},
\end{cases}
\]
\[
\,_{BK}E_2^{2,t}\QTWO=
\begin{cases}
{\displaystyle\bigoplus_{n\in\N}(\ZLOC\oplus\Z/(3))},
& t=0,\\
{\displaystyle\bigoplus_{n\in\N}(\Z/(3)\oplus\Z/(3^{\nu_3(3m)}))},
& t=4m,m\neq0,\\
{\displaystyle\bigoplus_{n\in\N}(\Z/(3)\oplus
\Z/(3^{\nu_3(6m+3)}))},
& t=4m+2,m\leq0,\\
{\displaystyle\left(\bigoplus_{n\in\N}(\Z/(3)\oplus
\Z/(3^{\nu_3(6m+3)}))\right)\bigg/\sim},
& t=4m+2,m\geq1,m\equiv13\ (27),\\
{\displaystyle\bigoplus_{n\in\N}(\Z/(3)\oplus
\Z/(3^{\nu_3(6m+3)}))},
& t=4m+2,m\geq1,m\nequiv13\ (27)\\
0, &\text{otherwise}
\end{cases}
\]
where $\sim$ denotes a single relation among
the generators, and $\,_{BK}E_2^{s,t}\QTWO=0$
for $s\geq3$.
\end{thm}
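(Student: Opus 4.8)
The plan is to identify $\,_{BK}E_2^{s,t}\QTWO$ with the cohomology of the normalized cochain complex of homotopy groups associated to the semi-cosimplicial spectrum $\qbullet$, and then to compute that cohomology one cosimplicial degree at a time using the explicit descriptions of the coface maps obtained in \cite{Larson:ANSS}. The first step is the standard identification of the BKSS $E_2$-term: for a semi-cosimplicial spectrum $X^\bullet$ one has $E_1^{s,t}=\pi_t X^s$ with $d_1$ the alternating sum of the maps induced by the cofaces, so that $\,_{BK}E_2^{s,t}=H^s(\pi_t X^\bullet)$. For $\qbullet$ this is the cohomology of the three-term complex
\[
\pi_t TMF_{(3)}\xrightarrow{\ \partial^0\ }
\pi_t TMF_0(2)_{(3)}\oplus\pi_t TMF_{(3)}
\xrightarrow{\ \partial^1\ }
\pi_t TMF_0(2)_{(3)},
\]
concentrated in cosimplicial degrees $0,1,2$. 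This immediately yields $\,_{BK}E_2^{s,t}\QTWO=0$ for $s\geq3$, and reduces the theorem to computing $\ker\partial^0$, $\ker\partial^1/\operatorname{im}\partial^0$, and $\coker\partial^1$ as $\ZLOC$-modules.

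Next I would assemble the differentials explicitly. The cofaces are built from the two natural maps $TMF_0(2)_{(3)}\to TMF_{(3)}$ arising in Behrens's construction --- the forgetful map and the degree-$2$ quotient (isogeny) map --- whose effects on homotopy are the identity and the operation $\psitwo$. I would write $\partial^0$ and $\partial^1$ in matrix form with entries assembled from $1$ and $\psitwo$ exactly as in \cite{Larson:ANSS}, where the action of $\psitwo$ on both $\pi_* TMF_{(3)}$ and $\pi_* TMF_0(2)_{(3)}$ was determined. With these in hand, the degree-$0$ computation is the most transparent: $\,_{BK}E_2^{0,t}=\ker\partial^0$ is the torsion subgroup $(\pi_t TMF_{(3)})_T$ for $t\neq0$, since $\partial^0$ is injective on the torsion-free modular-forms part, while the unit in $\pi_0$ survives to give the extra $\ZLOC$ in degree $t=0$.

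The bulk of the work is then the linear algebra over $\ZLOC$ computing the image and cokernel in degrees $1$ and $2$. I would decompose $\pi_* TMF_{(3)}$ and $\pi_* TMF_0(2)_{(3)}$ into their torsion-free ``modular forms'' part, concentrated in degrees $\equiv 0\pmod 4$, and their $3$-torsion part, and carry out the kernel/image/cokernel computations separately on each, organizing the answer by the residue of $t$ modulo $4$. On the free part the computation reduces to understanding $\psitwo$ on modular forms via $q$-expansions; the $3$-adic valuations $\nu_3(3m)$ and $\nu_3(6m+3)$ appearing in the statement, together with the congruence condition $m\equiv13\ (27)$, are precisely the number-theoretic fingerprints of this isogeny action, and I would extract them by tracking the valuations of the relevant $q$-expansion coefficients. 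The free-part contributions then emerge as the cyclic quotients $\Z/(3^{\nu_3(3m)})$ and $\Z/(3^{\nu_3(6m+3)})$, while the $\bigoplus_{n\in\N}\Z/(3)$ summands come from the torsion families that are permuted by the cofaces.

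I expect the main obstacle to be the middle cohomology $\,_{BK}E_2^{1,t}$, specifically its torsion. Whereas the free part yields clean cyclic quotients via valuation computations, the action of $\psitwo$ on certain $3$-torsion classes in $\pi_* TMF_0(2)_{(3)}$ is not fully pinned down by the methods of \cite{Larson:ANSS}, and this is exactly the source of the undetermined submodule $U^*\subset\,_{BK}E_2^{1,*}$ in the degrees $t=4m+2$ with $m\geq1$, $m\equiv13\ (27)$, and correspondingly of the single relation $\sim$ in $\,_{BK}E_2^{2,t}$ in those same degrees. Resolving these completely would require a finer determination of the degree-$2$ isogeny on torsion than is presently available; accordingly, the theorem is stated up to that ambiguity, and my proof would isolate $U^*$ and the relation $\sim$ cleanly so that the remaining summands are computed unconditionally.
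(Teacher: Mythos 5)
Your overall skeleton is sound: identifying $\,_{BK}E_2^{s,t}\QTWO$ with the cohomology of the three-term complex $\pi_tTMF_{(3)}\to B_{t/2}\oplus\pi_tTMF_{(3)}\to B_{t/2}$ is exactly Proposition \ref{BKSSETWO}, the vanishing for $s\geq3$ is immediate from it, and your description of the $0$-line agrees with Proposition \ref{deltazeroresults}. But there are two concrete gaps. First, you describe the differentials as ``assembled from $1$ and $\psitwo$,'' omitting the self-map $\psi_d$ of $\pi_*TMF_0(2)_{(3)}=B$ coming from the dual isogeny. The combination $h=\psi_d+1$ is the engine of the entire computation in the $B$-direction: its kernel and cokernel (Proposition \ref{hprop}), obtained by diagonalizing $\psi_d$ on the eigenvector basis $\{a_{i,j},b_{i,j}\}$ of Lemma \ref{bdirectsum}, are what produce every $\bigoplus_{n\in\N}\ZLOC$, every $\bigoplus_{n\in\N}\Z/(3)$, and the cyclic groups $\Z/(3^{\nu_3(3m)})$ and $\Z/(3^{\nu_3(6m+3)})$ in the statement. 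A plan that tracks only $1$ and $\psitwo$ cannot reach the answer, and ``linear algebra over $\ZLOC$'' on these infinite-rank modules is only tractable because of that eigenvector decomposition, which your proposal neither supplies nor replaces. Note also that the paper does not attack $H^1$ of the total complex head-on: it filters $\mathcal G_*^{\bullet}$ so that everything reduces to kernels and cokernels of $g=\psitwo-1$ and $h=\psi_d+1$ together with two connecting maps $\delta^0$, $\delta^1$ induced by $\phi_q-\phi_f$ and $-\phi_f$; a direct route must additionally justify the splitting of the extension \eqref{hone}, which is not automatic.

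Second, you have misidentified the source of the ambiguity $U^*$ and of the relation $\sim$. There are no ``$3$-torsion classes in $\pi_*TMF_0(2)_{(3)}$'': by Lemma \ref{ANSSFORTMF}(b) that ring is $B$, which is torsion-free, and $\psitwo$ plays no role in this issue. The indeterminacy arises from the \emph{torsion-free} part of $\pi_*TMF_{(3)}$: for $m>0$ with $m\equiv13\ (27)$, the class $\delta^1(D^m_{\ell^{1,m}})$ --- the image under $-\phi_f$ of a monomial $c_4^{a}c_6\Delta^{\ell}$, projected to $\coker h$ --- is a nonzero combination of the generators $B_v^m$ whose coefficients' $3$-divisibility is not pinned down by the methods of \cite{Larson:ANSS} (see Case 5 and Lemma \ref{LEMMAFIVE}). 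Isolating $U^*$ and $\sim$ therefore requires exactly the matrix analysis of $\delta^1\big|_{W^{1,m}}$ that your sketch defers, not a finer determination of the degree-$2$ isogeny on torsion.
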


In Section \ref{outline} we list 
the results that comprise our proof
of Theorem \ref{main}.  In Section \ref{ANSS_SECTION}
we set up the BKSS for $\QTWO$, along with
several algebraic tools needed for pursuing
the $E_2$-term.  Sections \ref{theghmaps} and
\ref{connecting} are the most technical, and
contain the computations needed to finish the proof of
Theorem \ref{main}.  Finally, in Section
\ref{higherdiffs} we briefly examine the structure of 
the differentials on the $E_2$-page and
beyond in the BKSS.

\section{Statement of main results}
\label{outline}
This section outlines our strategy for
proving Theorem \ref{main}.  Our approach
resembles that from our previous work
\cite{Larson:ANSS} on the Adams-Novikov
$E_2$-term for $\QTWO$.  In Propositions
\ref{BKSSETWO} -- \ref{gandh} below, there
are references to two graded $\ZLOC$-algebras,
$B$ and $\Gamma$; these algebras piece together
to form an elliptic curve Hopf algebroid
that will be defined in Section \ref{ANSS_SECTION}.
This Hopf algebroid is the key algebraic
object underlying our computation.

To begin,
we express the Bousfield-Kan $E_2$-term
for $\QTWO$ algebraically.

\begin{prop}\label{BKSSETWO}
The BKSS $E_2$-term for $\QTWO$ is
the cohomology of a semi-cosimplicial
abelian group.  More precisely,
it has the form
\[
\,_{BK}E_2^{s,t}\QTWO=H^s(
\pi_tTMF_{(3)}\Rightarrow B_{t/2}
\times\pi_tTMF_{(3)}\Rrightarrow
B_{t/2}
)
\]
where the coface maps are induced
by the corresponding maps in \eqref{bullet}.
\end{prop}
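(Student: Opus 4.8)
The plan is to identify the Bousfield-Kan spectral sequence's $E_2$-term with the cohomology of an explicit semi-cosimplicial abelian group by applying homotopy to the semi-cosimplicial diagram $\qbullet$ levelwise. Recall that for a (semi-)cosimplicial spectrum, the BKSS has $E_1^{s,t}$ given by the homotopy groups of the $s$-th term, with $d_1$ induced by the alternating sum of coface maps, and $E_2^{s,t}$ is the cohomology of this cochain complex. So the first step is to write down $\pi_t$ applied to each spectrum appearing in \eqref{bullet}: namely $\pi_t TMF_{(3)}$ in cosimplicial degree $0$, then $\pi_t\bigl(TMF_0(2)_{(3)}\vee TMF_{(3)}\bigr)$ in degree $1$, and $\pi_t TMF_0(2)_{(3)}$ in degree $2$. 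Since homotopy groups send wedges to direct sums, the degree-$1$ term is $\pi_t TMF_0(2)_{(3)}\oplus\pi_t TMF_{(3)}$.

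The second step is to translate the elliptic homotopy groups into the algebraic language of the Hopf algebroid $(B,\Gamma)$ promised in Section \ref{ANSS_SECTION}. The key input is the identification $\pi_t TMF_0(2)_{(3)}\cong B_{t/2}$, reflecting that $B$ is the graded ring of modular forms (or its $3$-local integral analog) attached to $\Gamma_0(2)$, placed in even internal degrees so that a weight-$k$ form sits in topological degree $2k$. With this, the degree-$1$ term becomes $B_{t/2}\times\pi_t TMF_{(3)}$ and the degree-$2$ term becomes $B_{t/2}$, matching the claimed semi-cosimplicial abelian group
\[
\pi_tTMF_{(3)}\Rightarrow B_{t/2}\times\pi_tTMF_{(3)}\Rrightarrow B_{t/2}.
\]
I would justify this identification by citing the construction of $TMF_0(2)_{(3)}$ via the Hopf algebroid and the collapse (or known structure) of its descent spectral sequence in the relevant range, so that $\pi_* TMF_0(2)_{(3)}$ is concentrated in even degrees and given by $B$.

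The third step is to check that the coface maps of $\qbullet$ induce, on homotopy, precisely the coface maps of this semi-cosimplicial abelian group—this is where the degree-$2$ isogeny structure enters. Each coface is built from the two maps $TMF_{(3)}\to TMF_0(2)_{(3)}$ (the forgetful map and the $[2]$-isogeny/quotient map) together with the structural maps encoded in $\Gamma$. I would observe that applying $\pi_t$ is functorial, so the alternating-sum description of the arrows in \eqref{bullet} passes directly to alternating sums on homotopy groups, yielding the cochain differentials. Taking cohomology of the resulting complex then gives $\,_{BK}E_2^{s,t}\QTWO$ by the definition of the BKSS $E_2$-page.

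The main obstacle will be the homotopy identification itself: ensuring that $\pi_* TMF_0(2)_{(3)}$ really is even-concentrated and equal to $B$ (so that no odd-degree contributions or higher descent differentials obstruct the clean statement), and confirming that $\pi_* TMF_{(3)}$ appears as written rather than as some associated-graded object of a nontrivial filtration. Concretely, the subtlety is that the $E_1$-term of the BKSS is genuinely $\pi_t$ of each level, so I must verify that the descent/elliptic spectral sequences computing these individual homotopy groups behave well enough—either collapsing or having their output absorbed into $B$ and $\pi_t TMF_{(3)}$—for the levelwise homotopy to be as claimed. Once this input is in hand, the identification of differentials and the passage to cohomology are formal, so the crux is entirely the computation of $\pi_* TMF_0(2)_{(3)}$ and its compatibility with the isogeny-induced coface maps.
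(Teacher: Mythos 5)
Your proposal matches the paper's own argument: the paper likewise identifies the BKSS $E_1$-term with the levelwise homotopy groups of $\QTWO^{\bullet}$ (citing Behrens), takes cohomotopy to get $E_2$, and supplies the key input $\pi_{2k}TMF_0(2)_{(3)}=B_k$ via the collapse of the Adams--Novikov spectral sequence for the discrete Hopf algebroid $(B,B)$ (Lemma \ref{ANSSFORTMF}(b)). The ``main obstacle'' you flag --- even-concentration of $\pi_*TMF_0(2)_{(3)}$ and its identification with $B$ --- is exactly the point the paper resolves with that lemma, so your approach is essentially identical.
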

For notational convenience,
we put
\begin{equation}\label{gbullet}
\mathcal G_*^{\bullet}:=
(\pi_*TMF_{(3)}\Rightarrow B_{*/2}
\times\pi_*TMF_{(3)}\Rrightarrow
B_{*/2})
\end{equation}
so that $\,_{BK}E_2^{s,t}\QTWO=H^s\mathcal G_t^{\bullet}$
by Proposition \ref{BKSSETWO}.
The following proposition describes
a two-stage filtration that we use to compute 
$H^*\mathcal G_*^{\bullet}$.

\begin{prop}\label{filtration}
There is a filtration 
$\mathcal G_*^{\bullet}=F^0
\supset F^1\supset F^2$ of 
$\mathcal G_*^{\bullet}$
inducing a short exact sequence
$0\to C'\to \mathcal G_*^{\bullet}\to C''\to 0$,
where 
\[
C'=(0\to B_{*/2}
\xrightarrow{h}
B_{*/2}),\quad
C''=(\pi_*TMF_{(3)}\xrightarrow{g}\pi_*TMF_{(3)}\to0).
\]
The resulting long exact sequence in
cohomology is
\[
0\to H^0\mathcal G_*^{\bullet}\to\ker g
\xrightarrow{\delta^0}
\ker h\to H^1\mathcal G_*^{\bullet}\to\coker g
\xrightarrow{\delta^1}
\coker h\to H^2\mathcal G_*^{\bullet}\to0
\]
so that $H^0\mathcal G_*^{\bullet}=\ker\delta^0$,
$H^2\mathcal G_*^{\bullet}=\coker\delta^1$,
and $H^1\mathcal G_*^{\bullet}$ lies in the
short exact sequence
\begin{equation}\label{hone}
0\to\coker\delta^0\to H^1\mathcal G_*^{\bullet}
\to\ker\delta^1\to0.
\end{equation}
Moreover, 
$\coker\delta^0$
is concentrated in $t$-degree 0
and $\ker\delta^1$ is free of rank 1
in $t$-degree 0, so that the sequence
\eqref{hone} splits in $t$-degree 0
and $H^1\mathcal G_*^{\bullet}\cong\ker\delta^1$
in nonzero $t$-degrees.
\end{prop}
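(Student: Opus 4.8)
The plan is to realize the filtration concretely on the semi-cosimplicial abelian group $\mathcal{G}_*^{\bullet}$ of \eqref{gbullet}. By Proposition \ref{BKSSETWO} each term splits as a direct sum: degree $0$ is $\pi_*TMF_{(3)}$, degree $1$ is $B_{*/2}\oplus\pi_*TMF_{(3)}$, and degree $2$ is $B_{*/2}$. I would take $F^1$ to be the subobject that is $0$ in degree $0$, the summand $B_{*/2}$ in degree $1$, and all of $B_{*/2}$ in degree $2$, and set $F^2=0$ and $F^0=\mathcal{G}_*^{\bullet}$. The first point to check is that $F^1$ is a subcomplex, and this is automatic: the second differential $\partial^1$ carries the $B_{*/2}$-summand of degree $1$ into degree $2$, which is all of $B_{*/2}$, with no constraint. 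Writing the two differentials componentwise as $\partial^0(x)=(\partial^0_B(x),g(x))$ and $\partial^1(b,x)=h(b)+\partial^1_\pi(x)$ then identifies $F^1$ with $C'=(0\to B_{*/2}\xrightarrow{h}B_{*/2})$ and the quotient $F^0/F^1$ with $C''=(\pi_*TMF_{(3)}\xrightarrow{g}\pi_*TMF_{(3)}\to0)$, giving the short exact sequence of complexes asserted in the statement and simultaneously defining $g$ and $h$ as the indicated components of the coface differentials.

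Next I would feed $0\to C'\to\mathcal{G}_*^{\bullet}\to C''\to0$ into the long exact sequence in cohomology. Since $C'$ is a two-term complex sitting in cosimplicial degrees $1$ and $2$, one has $H^0C'=0$, $H^1C'=\ker h$, and $H^2C'=\coker h$; since $C''$ sits in degrees $0$ and $1$, one has $H^0C''=\ker g$, $H^1C''=\coker g$, and $H^2C''=0$. Substituting these into the long exact sequence produces exactly the displayed six-term sequence, with connecting maps $\delta^0$ and $\delta^1$. Exactness at the two ends gives $H^0\mathcal{G}_*^{\bullet}=\ker\delta^0$ and $H^2\mathcal{G}_*^{\bullet}=\coker\delta^1$ immediately, and the four-term stretch around $H^1$ yields \eqref{hone} by the standard bookkeeping: the image of $\ker h\to H^1\mathcal{G}_*^{\bullet}$ is $\coker\delta^0$ and the image of $H^1\mathcal{G}_*^{\bullet}\to\coker g$ is $\ker\delta^1$. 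A useful preliminary, obtained by tracing the zig-zag defining the connecting homomorphism, is the pair of explicit formulas $\delta^0=\partial^0_B|_{\ker g}$ and $\delta^1[x]=[\partial^1_\pi(x)]$; these are what the degreewise analysis requires.

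The substance of the proposition is the ``moreover'' clause, and this is where I expect the real work. Neither assertion can be read off formally, since $\coker\delta^0$ is a quotient of $\ker h\subseteq B_{*/2}$ and so carries no a priori degreewise bound. My plan is to combine the explicit descriptions of $g$ and $h$ from Proposition \ref{gandh} with the formulas $\delta^0=\partial^0_B|_{\ker g}$ and $\delta^1[x]=[\partial^1_\pi(x)]$ above, working one $t$-degree at a time. The claim that $\ker\delta^1$ is free of rank $1$ in $t$-degree $0$ is the easier half: because the coface maps are maps of ring spectra, $g$ vanishes on $\pi_0TMF_{(3)}=\ZLOC$, so $\coker g=\ZLOC$ there, and it remains to check that $\delta^1$ kills the generator, i.e.\ that $\partial^1_\pi(1)$ lies in the image of $h$. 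The claim that $\coker\delta^0$ is concentrated in $t$-degree $0$ is the harder half and the main obstacle: it requires showing $\partial^0_B\colon\ker g\to\ker h$ is surjective in every nonzero $t$-degree, a uniform statement that cannot be seen formally and that draws on the detailed behavior of the coface maps on the elliptic curve Hopf algebroid established in Sections \ref{theghmaps} and \ref{connecting}. Granting these two computations, the rest is immediate: surjectivity of $\delta^0$ in nonzero degrees collapses \eqref{hone} to $H^1\mathcal{G}_*^{\bullet}\cong\ker\delta^1$ there, while freeness, hence projectivity, of $\ker\delta^1$ in $t$-degree $0$ splits \eqref{hone} in that degree.
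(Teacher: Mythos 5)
Your formal skeleton is correct and is exactly the paper's: the sub/quotient decomposition of $\mathcal G_*^{\bullet}$ into $C'=(0\to B\xrightarrow{h}B)$ and $C''=(\pi_*TMF_{(3)}\xrightarrow{g}\pi_*TMF_{(3)}\to0)$, the identification $H^*C'=(0,\ker h,\coker h)$ and $H^*C''=(\ker g,\coker g,0)$, the six-term exact sequence, and the identification of $\delta^0$ as the $B$-component of the zeroth coface differential restricted to $\ker g$ (in the paper, $(\phi_q-\phi_f)|_{\ker g}$) and of $\delta^1$ as induced by the $\pi_*TMF_{(3)}$-component of the first (namely $-\phi_f$). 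The paper likewise defers the ``moreover'' clause to the computations of Sections \ref{theghmaps} and \ref{connecting}, so deferring is not itself a gap.

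The gap is in your assessment of that deferred clause, and it rests on a false statement: $\pi_0TMF_{(3)}$ is \emph{not} $\ZLOC$. Since $\Delta^{\pm3}$, $3\Delta$, $3\Delta^2$ are in $\pi_*TMF_{(3)}$, the degree-zero homotopy is the infinite-rank free $\ZLOC$-module of Corollary \ref{TMFZERO}, spanned by $1, 3c_4^3\Delta^{-1}, 3c_4^6\Delta^{-2}, c_4^9\Delta^{-3},\ldots$. Consequently $\coker g$ in $t$-degree $0$ is all of $\pi_0TMF_{(3)}$, and the assertion that $\ker\delta^1$ is free of rank $1$ there is not ``check that $\delta^1$ kills the generator''; it is the substantive claim that $\delta^1$ is injective modulo $\ZLOC\{1\}$ on infinitely many generators $C_v^0$. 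This is the content of Eq.\ \eqref{delzerojmf}, the echelon-type matrix \eqref{deltazeromatrix}, and Lemma \ref{deltaoneintro}(b), and your proposed route would not produce it. Conversely, you have overestimated the other half: $\coker\delta^0$ is concentrated in $t$-degree $0$ for the trivial reason that its ambient module $\ker h=\ZLOC\{a_{-i,i}:i\geq1\}\subset B_0$ already is (Proposition \ref{hprop}(a)); no surjectivity of $\delta^0$ in nonzero degrees needs to be verified, because there is nothing there to hit. So the difficulty is inverted relative to where it actually lies.
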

Leveraging the filtration given in 
Proposition \ref{filtration} requires
computing the kernels and cokernels
of the maps $g$ and $h$.  We do so
by applying some technical results from
our previous work \cite{Larson:ANSS}
on the ANSS for $\QTWO$.  
Throughout this paper, $A_T$ will denote
the torsion subgroup of an abelian group $A$.
\begin{prop}\label{gandh}
As $\ZLOC$-modules,
\begin{align*}
\ker g&=\pi_0TMF_{(3)}\oplus
(\pi_*TMF_{(3)})_T,\\
\ker h&=\bigoplus_{n\in\mathbb N}
\ZLOC,
\end{align*}
and
\begin{align*}
\coker g&=\pi_*TMF_{(3)}/\sim_{\coker g},\\
\coker h&=
\left(\bigoplus_{n\in\N}\ZLOC\right)\oplus
\left(\bigoplus_{\substack{i<j\in\Z\\i+j\neq0}}
\mathbb Z/(3^{\nu_3(i+j)+1})
\right)\oplus\left(
\bigoplus_{i<j\in\Z}
\mathbb Z/(3^{\nu_3(2i+2j+1)+1})
\right)
\end{align*}
where $\sim_{\coker g}$ denotes the set of all
relations of the form $3^{\nu_3(t)+1}\cdot x=0$
for $x\in\pi_*TMF_{(3)}$ represented by an element 
in $\Ext^{0,t}_{\Gamma}
(B,B)$ with $t\neq0$.
\end{prop}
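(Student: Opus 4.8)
The plan is to realize $g$ and $h$ as the explicit operations on $\pi_*TMF_{(3)}$ and $B_{*/2}$ induced by the degree $2$ isogeny, and then to reduce each of the four computations to linear algebra over $\ZLOC$ using formulas established in \cite{Larson:ANSS}. From the description of the coface maps in \eqref{bullet} (recalled in Section \ref{ANSS_SECTION}), $g$ is the component of the alternating sum of coface maps with source and target $\pi_*TMF_{(3)}$, and $h$ is the analogous component on $B_{*/2}$; in the Hopf-algebroid language both are assembled from the right unit together with the transfer attached to $\psitwo$. The essential inputs are then the action of $\psitwo$ on modular forms and on an explicit $\ZLOC$-basis of $B_{*/2}$, both of which are computed in \cite{Larson:ANSS}.

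For the pair $\ker g$, $\coker g$ I would stratify $\pi_*TMF_{(3)}$ into its torsion submodule $(\pi_*TMF_{(3)})_T$ and its torsion-free quotient, which is the ring of modular forms $\bigoplus_t\Ext^{0,t}_{\Gamma}(B,B)$ and is concentrated in degrees $t\equiv0\ (4)$. The operation $\psitwo$ scales a weight-$k$ modular form by $2^k$, so on the torsion-free part in degree $t=2k$ the map $g$ is multiplication by $2^k-1$, while $g$ vanishes on torsion. Since $\nu_3(2^k-1)=\nu_3(t)+1$ for $t=2k\equiv0\ (4)$ (a lifting-the-exponent computation) and $g$ is zero in degree $0$, this gives $\ker g=\pi_0TMF_{(3)}\oplus(\pi_*TMF_{(3)})_T$ at once, and presents $\coker g$ as $\pi_*TMF_{(3)}$ modulo exactly the relations $3^{\nu_3(t)+1}\cdot x=0$ for $x$ represented in $\Ext^{0,t}_{\Gamma}(B,B)$, $t\neq0$, namely $\sim_{\coker g}$.

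For the pair $\ker h$, $\coker h$ I would fix the monomial $\ZLOC$-basis of the torsion-free module $B_{*/2}$ (allowing negative powers of the discriminant, so that the weight exponents range over $\Z$) and write the matrix of $h$ using the isogeny formulas from \cite{Larson:ANSS}. Separating the basis into its two natural families and performing a unimodular change of basis makes $h$ diagonal, with diagonal entries of the form $4^{i+j}-1$ and $4^{2i+2j+1}-1$ up to units. The entries with $i+j=0$ vanish, and these contribute the infinite free summand $\bigoplus_{n\in\N}\ZLOC$ to both $\ker h$ and $\coker h$; the remaining entries are nonzero, contributing nothing to the kernel, and their $3$-adic valuations $\nu_3(4^{i+j}-1)=\nu_3(i+j)+1$ (for $i+j\neq0$) and $\nu_3(4^{2i+2j+1}-1)=\nu_3(2i+2j+1)+1$ produce, via the Smith normal form, the two torsion families in $\coker h$.

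The main obstacle is the $\coker h$ computation. In contrast to $g$, the map $h$ is not diagonal in the obvious basis, so the work is to exhibit the unimodular change of basis that diagonalizes it and to verify that the resulting entries have precisely the asserted form. The two index families reflect the two orbit types of basis monomials under $\psitwo$, and pinning down the $3$-adic valuations reduces to a lifting-the-exponent analysis of the integers $4^N-1$. Carrying this out carefully, and confirming that no additional relations arise, is the delicate step; the needed isogeny formulas and the identification of the torsion-free quotient with $\Ext^{0,*}_{\Gamma}(B,B)$ are provided by \cite{Larson:ANSS}.
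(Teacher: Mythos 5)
Your proposal is correct and follows essentially the same route as the paper: one shows that $g=\psitwo-1$ acts on a class with Adams--Novikov internal degree $t$ as multiplication by $2^t-1$ (which is also why $g$ kills the $3$-torsion, since that torsion lives in nonzero even $t$ where $3\mid 2^t-1$), and one diagonalizes $h$ on the eigenbasis $\{a_{i,j},b_{i,j}\}$ of \cite{Larson:ANSS}, whose eigenvalues have $3$-adic valuations $\nu_3(i+j)+1$ and $\nu_3(2i+2j+1)+1$ by the lemma $\nu_3(4^n-1)=\nu_3(n)+1$. The one slip is your framing of $h$ as built from a ``transfer attached to $\psitwo$'': in fact $h=\psi_d+1$, where $\psi_d$ is the separate dual-isogeny map $q_2\mapsto-2q_2$, $q_4\mapsto q_2^2-4q_4$ (so that $h(a_{i,j})=(1-4^{i+j})a_{i,j}$ and $h(b_{i,j})=(1+2\cdot 4^{i+j})b_{i,j}$), though the diagonal entries you state, up to units in $\ZLOC$, are nonetheless the correct ones.
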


The following theorem describes 
$H^*\mathcal G_*^{\bullet}$.  
Using 
results from \cite{Larson:ANSS} once again,
we
prove this result by computing the connecting homomorphisms
$\delta^0$ and $\delta^1$ from Proposition \ref{filtration}.
\begin{thm}\label{cohomology}
\begin{enumerate}[(a)]
\item $H^0\mathcal G_*^{\bullet}=
\ZLOC\oplus(\pi_*TMF_{(3)})_T$
\item 
${\displaystyle
H^1\mathcal G_*^{\bullet}=\left(\bigoplus_{n\in\N}\ZLOC
\right)\oplus\left(\bigoplus_{n\in\N}\Z/(3)\right)
\oplus(\pi_*TMF_{(3)})_T\oplus\left(
\bigoplus_{m>0}\Z/(3^{\nu_3(3m)})
\right)
}$ \\
${\displaystyle\qquad\qquad\qquad\qquad
\quad\oplus\left(
\bigoplus_{\substack{m>0\\m\nequiv13\ (27)}}
\Z/(3^{\nu_3(6m+3)})
\right)\oplus U^*}$
\item 
${\displaystyle H^2\mathcal G_*^{\bullet}=
\left(\bigoplus_{n\in\N}\ZLOC\right)
\oplus\left(\bigoplus_{n\in\N}\Z/(3)\right)
\oplus\left(
\bigoplus_{m\neq0}\bigoplus_{n\in\N}\Z/(3^{\nu_3(3m)})
\right)\oplus
\left(
\bigoplus_{m\leq0}\bigoplus_{n\in\N}\Z/(3^{\nu_3(6m+3)})
\right)
}$\\
${\displaystyle\qquad\quad\oplus\left(
\bigoplus_{\substack{m>0\\m\nequiv13\ (27)}}
\bigoplus_{n\in\N}\Z/(3^{\nu_3(6m+3)})
\right)\oplus\left(
\bigoplus_{\substack{m>0\\m\equiv13\ (27)}}\left(\bigoplus_{n\in\N}
(\Z/(3)\oplus\Z/(3^{\nu_3(6m+3)}))\right)\bigg/\sim
\right)
}$
\end{enumerate}
\end{thm}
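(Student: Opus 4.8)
The plan is to feed the explicit modules of Proposition~\ref{gandh} into the long exact sequence of Proposition~\ref{filtration}, reducing the computation of $H^*\mathcal G_*^{\bullet}$ to an analysis of the two connecting homomorphisms $\delta^0\colon\ker g\to\ker h$ and $\delta^1\colon\coker g\to\coker h$. Once these are understood, parts (a)--(c) follow from the three identifications already recorded in Proposition~\ref{filtration}: namely $H^0\mathcal G_*^{\bullet}=\ker\delta^0$, $H^2\mathcal G_*^{\bullet}=\coker\delta^1$, and the short exact sequence \eqref{hone} expressing $H^1\mathcal G_*^{\bullet}$ in terms of $\coker\delta^0$ and $\ker\delta^1$.

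I would dispose of $\delta^0$ first. Since $\ker h=\bigoplus_{n\in\N}\ZLOC$ is torsion-free, the summand $(\pi_*TMF_{(3)})_T$ of $\ker g$ maps to $0$, and it only remains to evaluate $\delta^0$ on the free summand $\pi_0TMF_{(3)}=\ZLOC$, which sits in $t$-degree $0$. Tracing the connecting homomorphism through the filtration of Proposition~\ref{filtration}---concretely, lifting the unit class, applying the cosimplicial differential, and identifying the result inside $\ker h$---shows that this class also maps to $0$, so $\delta^0=0$. This gives $\ker\delta^0=\ker g=\ZLOC\oplus(\pi_*TMF_{(3)})_T$, which is part~(a), and at the same time identifies $\coker\delta^0$ with all of $\ker h=\bigoplus_{n\in\N}\ZLOC$, concentrated in $t$-degree $0$ as asserted in Proposition~\ref{filtration}.

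The heart of the argument is the computation of $\delta^1\colon\coker g\to\coker h$. Here I would use the descriptions of $\coker g$ and $\coker h$ from Proposition~\ref{gandh} in terms of the $\Ext$-groups of the elliptic curve Hopf algebroid, and compute $\delta^1$ one internal degree at a time from the coface maps of \eqref{bullet} together with the technical isogeny identities established in \cite{Larson:ANSS}. The kernel $\ker\delta^1$ then produces, through \eqref{hone}, the higher-degree classes of $H^1\mathcal G_*^{\bullet}$: the surviving $TMF$-torsion $(\pi_*TMF_{(3)})_T$, the cyclic families $\Z/(3^{\nu_3(3m)})$ in degrees $t=4m$ and $\Z/(3^{\nu_3(6m+3)})$ in degrees $t=4m+2$, the ubiquitous copies of $\Z/(3)$, and the module $U^*$, which arises exactly in the degrees $t=4m+2$ with $m\equiv13\ (27)$ where the isogeny relations fail to pin $\delta^1$ down completely. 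Dually, $\coker\delta^1$ is $H^2\mathcal G_*^{\bullet}$, and reading off the quotient of $\coker h$ by the image of $\delta^1$ yields the summands of part~(c), with the single relation $\sim$ again appearing in the exceptional congruence class.

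It then remains to assemble $H^1\mathcal G_*^{\bullet}$ from \eqref{hone}. In nonzero $t$-degree the sequence collapses to the isomorphism $H^1\mathcal G_*^{\bullet}\cong\ker\delta^1$ of Proposition~\ref{filtration}, so the families identified above are precisely the higher-degree classes, while in $t$-degree $0$ the splitting adjoins $\coker\delta^0=\bigoplus_{n\in\N}\ZLOC$, accounting for the free summand of part~(b). The main obstacle throughout is the explicit evaluation of $\delta^1$: keeping track of the $3$-adic valuations that govern the orders of the cyclic summands, and above all isolating the exceptional behavior in the class $m\equiv13\ (27)$, which is responsible simultaneously for the ambiguity $U^*$ in $H^1\mathcal G_*^{\bullet}$ and for the relation $\sim$ in $H^2\mathcal G_*^{\bullet}$.
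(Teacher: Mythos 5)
Your overall scaffolding (feed Proposition~\ref{gandh} into the long exact sequence, compute the two connecting maps, read off $H^0=\ker\delta^0$, $H^2=\coker\delta^1$, and assemble $H^1$ from \eqref{hone}) is exactly the paper's strategy, and your description of the $\delta^1$ analysis, while only a plan, points at the right technical inputs. But your treatment of $\delta^0$ contains a genuine error. You assert that the free summand of $\ker g$ is ``$\pi_0TMF_{(3)}=\ZLOC$'' and that $\delta^0$ vanishes on it. Neither is true: $\pi_0TMF_{(3)}$ is a free $\ZLOC$-module of \emph{countably infinite} rank, with basis $\{1,3C_1^0,3C_2^0,C_3^0,\ldots\}$ (Corollary~\ref{TMFZERO} --- these are the $c_4^n c_6^{\epsilon}\Delta^{\ell}$ monomials of internal degree $0$), and the explicit formula \eqref{delzerojmf} shows $\delta^0(C_v^0)\neq0$ for every $v\geq1$; the matrix \eqref{deltazeromatrix} has a unit or $3\cdot(\text{unit})$ as the leading entry of each such column. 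If $\delta^0$ were zero, part~(a) would read $H^0\mathcal G_*^{\bullet}=\pi_0TMF_{(3)}\oplus(\pi_*TMF_{(3)})_T$, an infinite-rank free module plus torsion, contradicting the single copy of $\ZLOC$ in the statement. The actual content of Proposition~\ref{deltazeroresults} is that $\delta^0$ kills only the unit class and the $TMF$-torsion, which is what produces $\ker\delta^0=\ZLOC\{1\}\oplus(\pi_*TMF_{(3)})_T$.

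The same error propagates into your $H^1$. You identify $\coker\delta^0$ with all of $\ker h=\bigoplus_{n\in\N}\ZLOC$, but the correct cokernel is $\ZLOC\{a_{-i,i}:i\text{ odd}\}\oplus\Z/(3)\{\delta^0(C_v^0):v\geq1,\,3\nmid v\}$: the columns of \eqref{deltazeromatrix} whose leading entries are $3u_k$ rather than $u_k$ contribute $\Z/(3)$ summands to the cokernel. These $\Z/(3)$'s are precisely the torsion in $t$-degree $0$ of $H^1\mathcal G_*^{\bullet}$ (the $\bigoplus_{n\in\N}(\ZLOC\oplus\Z/(3))$ in Theorem~\ref{main} at $t=0$), so your version of $\coker\delta^0$ loses them. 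To repair the argument you need the explicit evaluation \eqref{delzerojmf} of $\delta^0$ on the basis of Corollary~\ref{TMFZERO} and the resulting change of basis for $\ker h$; a degree argument alone does not suffice, since both source and target of $\delta^0\big|_{\pi_0TMF_{(3)}}$ are concentrated in $t$-degree $0$.
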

\begin{proof}[Proof of Theorem \ref{main}.]\let\qed\relax
Our proof of Theorem \ref{cohomology}
will reveal that the generators of $\ZLOC$
summands lie in $\,_{BK}E_2^{*,0}\QTWO$,
the generators of $\Z/(3^{\nu_3(3m)})$ 
(resp.\ $\Z/(3^{\nu_3(6m+3)})$)
lie in $\,_{BK}E_2^{*,4m}\QTWO$
(resp.\ $\,_{BK}E_2^{*,4m+2}\QTWO$),
and the direct sums of countably many copies of $\Z/(3)$
lie in the bidegrees
Theorem \ref{main} indicates.  
The relation $\sim$ appearing in Theorem \ref{main}
and Theorem \ref{cohomology}(c) is defined
below in Lemma \ref{LEMMAFIVE}(a).
Coupled with
these facts, Propositions \ref{BKSSETWO} and
\ref{filtration} and Theorem \ref{cohomology}
together prove Theorem \ref{main}.
\end{proof}

\section{$\pmb{\QTWO}$ and its Bousfield-Kan
spectral sequence}
\label{ANSS_SECTION}
In this section we set up the Bousfield-Kan
spectral sequence for $\QTWO$ and its underlying
algebraic apparatus.  In particular,
we prove Propositions \ref{BKSSETWO}
and \ref{filtration}.

\subsection{The homotopy of $\pmb{TMF_{(3)}}$
and $\pmb{TMF_0(2)_{(3)}}$}
We begin by formally defining $\QTWO$
in terms of the semi-cosimplicial diagram \eqref{bullet}
with $p=3$, $N=2$.  The coface maps will be defined
later in this section in terms of algebraic data.
\begin{defn}\label{cosimplicial} The spectrum
$\QTWO$ is given by
\[
\QTWO:=\holim(TMF_{(3)}\Rightarrow TMF_0(2)_{(3)}\vee 
TMF_{(3)}
\Rrightarrow TMF_0(2)_{(3)}),
\]
that is, $\QTWO=\holim\QTWO^{\bullet}$.
\end{defn}
The homotopy rings $\pi_*TMF_{(3)}$
and $\pi_*TMF_0(2)_{(3)}$ will play a key
role in our computation, 
and they both arise from the data in
an elliptic curve Hopf algebroid 
over $\ZLOC$ that we shall denote
$(B_*,\Gamma_*)$ (we will often
suppress the grading and simply write $(B,\Gamma)$).  
This Hopf
algebroid co-represents the groupoid
of non-singular elliptic curves over 
$\ZLOC$ with Weierstrass equation
\begin{equation}\label{Weier}
y^2=4x(x^2+q_2x+q_4)
\end{equation}
and isomorphisms $x\mapsto x+r$ that preserve
this Weierstrass form. 
\begin{defn}
As graded $\ZLOC$-algebras,
\begin{align*}
B_*&:=\ZLOC[q_2,q_4,\Delta^{-1}]/
(\Delta=q_4^2(16q_2^2-64q_4)),\\
\Gamma_*&:=B_*[r]/(r^3+q_2r^2+q_4r)
\end{align*}
where $q_2\in B_2$, $q_4\in B_4$
(hence $\Delta\in B_{12}$), and
$r\in\Gamma_2$.
\end{defn} 
The following lemma gives Adams-Novikov-style
spectral sequences converging to $\pi_*TMF_{(3)}$
and $\pi_*TMF_0(2)_{(3)}$, respectively
(see, e.g., \cite{Beh:Mod}, Corollary 1.4.2).
\begin{lem}\label{ANSSFORTMF}
\begin{enumerate}[(a)]
\item The Adams-Novikov spectral sequence (ANSS) converging
to $\pi_*TMF_{(3)}$ has the form
\begin{equation}\label{ANSSETWO}
\,_{AN}E_2^{s,t}TMF_{(3)}:=\Ext^{s,t}_{\Gamma}(B,B)
\Rightarrow\pi_{2t-s}TMF_{(3)}
\end{equation}
where $\Ext^{*,*}_{\Gamma}(B,B)$ is the 
cohomology of $(B,\Gamma)$.
\item The ANSS converging
to $\pi_*TMF_0(2)_{(3)}$ has the form
\[
\,_{AN}E_2^{s,t}TMF_0(2)_{(3)}:=
\Ext_B^{s,t}(B,B)\Rightarrow\pi_{2t-s}
TMF_0(2)_{(3)}
\]
and therefore collapses at the $E_2$-page, yielding 
\[
\pi_{2k}TMF_0(2)_{(3)}=B_k 
\]
for all $k\in\Z$.  In particular, 
$\pi_*TMF_0(2)_{(3)}$ is concentrated
in dimensions congruent to 0 modulo 4.
\end{enumerate}
\end{lem}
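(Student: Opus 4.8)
The plan is to obtain both spectral sequences as descent spectral sequences for the Goerss--Hopkins--Miller sheaf of $E_\infty$-rings, and then to read off the $E_2$-terms from an explicit Hopf-algebroid presentation. By the Goerss--Hopkins--Miller theorem there is a sheaf $\mathcal O^{\mathrm{top}}$ of $E_\infty$-ring spectra, in the \'etale topology, on the moduli stack $\mathcal M_{ell}$ of elliptic curves with $TMF = \mathcal O^{\mathrm{top}}(\mathcal M_{ell})$, and a parallel sheaf on the stack $\mathcal M_0(2)$ of elliptic curves equipped with a $\Gamma_0(2)$-structure with $TMF_0(2) = \mathcal O^{\mathrm{top}}(\mathcal M_0(2))$; localizing at $3$ produces the two spectra in the statement. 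Its homotopy sheaf satisfies $\pi_{2t}\mathcal O^{\mathrm{top}} = \omega^{\otimes t}$ and its odd homotopy sheaves vanish, so the descent (hypercohomology) spectral sequence for global sections takes the form $H^s(\mathcal M;\omega^{\otimes t}) \Rightarrow \pi_{2t-s}\mathcal O^{\mathrm{top}}(\mathcal M)$ on each stack. This descent spectral sequence is precisely the ANSS in the present setting: it is the homotopy spectral sequence of the cosimplicial spectrum obtained by evaluating $\mathcal O^{\mathrm{top}}$ on the \v{C}ech nerve of a complex-orientable affine cover, and that cover is exactly the Weierstrass presentation $(B_*,\Gamma_*)$.

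For (a), I would verify that the Weierstrass curve \eqref{Weier}, together with the coordinate changes $x \mapsto x+r$ (the residual rescalings being recorded by the grading), co-represents the groupoid of elliptic curves over $\ZLOC$ and their isomorphisms, and hence furnishes a flat Hopf-algebroid presentation $(B_*,\Gamma_*)$ of $\mathcal M_{ell,(3)}$. For such a presentation the cohomology of the stack with coefficients in $\omega^{\otimes t}$ is computed by the cobar complex of the Hopf algebroid, i.e.\ $H^s(\mathcal M_{ell,(3)};\omega^{\otimes t}) = \Ext^{s,t}_{\Gamma}(B,B)$. Combined with the previous paragraph this gives the $E_2$-term and target in \eqref{ANSSETWO}, proving part (a).

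Part (b) rests on a rigidity phenomenon. The Weierstrass form \eqref{Weier} carries the canonical $2$-torsion point $(x,y)=(0,0)$, hence a canonical $\Gamma_0(2)$-structure, and because $2$ is invertible in $\ZLOC$ the only coordinate change $x \mapsto x+r$ preserving both the form and this marked point is $r=0$. The residual Hopf algebroid presenting $\mathcal M_0(2)_{(3)}$ is therefore the discrete one $(B_*,B_*)$, so $\Ext^{s,t}_B(B,B)$ vanishes for $s>0$ and equals $B_t$ for $s=0$; the descent/ANSS spectral sequence collapses at $E_2$ with no possible differentials or extension problems, whence $\pi_{2t}TMF_0(2)_{(3)} = B_t$. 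Finally, every monomial $q_2^a q_4^b \Delta^{-c}$ has even internal degree, so $B_*$ vanishes in odd degrees; thus $\pi_{4j+2}TMF_0(2)_{(3)} = B_{2j+1} = 0$ and $\pi_*TMF_0(2)_{(3)}$ is concentrated in dimensions divisible by $4$, as claimed.

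I expect the genuine obstacles to be the two structural inputs rather than any calculation: first, the identification of the descent spectral sequence with the ANSS together with the identification of stack cohomology with $\Ext$ over the Weierstrass Hopf algebroid, and second, the rigidification in (b)---that marking the $2$-torsion point annihilates all nontrivial automorphisms once $2$ is inverted. Both are standard in the theory of topological modular forms, so in practice I would simply invoke \cite{Beh:Mod}, Corollary 1.4.2 and the Goerss--Hopkins--Miller construction in place of reproving them.
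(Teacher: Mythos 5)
Your proposal is correct and is essentially the argument behind the citation the paper itself relies on: the paper offers no proof of this lemma beyond pointing to (\cite{Beh:Mod}, Corollary 1.4.2), and your descent-spectral-sequence sketch --- presenting $\mathcal M_{ell,(3)}$ by the Hopf algebroid $(B,\Gamma)$ via the \'etale cover $\mathcal M_0(2)_{(3)}\to\mathcal M_{ell,(3)}$, and observing that marking the $2$-torsion point $(0,0)$ rigidifies the Weierstrass form so that $\mathcal M_0(2)_{(3)}$ is presented by the discrete Hopf algebroid $(B,B)$ --- is exactly the content of that reference. The degree count showing $B_*$ vanishes in odd internal degree, hence $\pi_*TMF_0(2)_{(3)}$ is concentrated in degrees divisible by $4$, is also correct.
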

The spectral sequence \eqref{ANSSETWO} has
been computed by Hopkins and Miller.  Expository
accounts of this computation can be found in 
\cite{Bau}, \cite{Henriques}, and \cite{Mathew}.
We recall the results of this computation here.
\begin{lem}\label{TMFHOMOTOPY}
The homotopy ring of $TMF_{(3)}$ is
\[
\pi_*TMF_{(3)}=\ZLOC[c_4,c_6,3\Delta,3\Delta^2,
\Delta^3,\Delta^{-3},\alpha,\beta,b]/\sim_{TMF}
\]
where 
$c_4\in\pi_8$, $c_6\in\pi_{12}$,
$3\Delta\in\pi_{24}$, $\alpha\in\pi_3$,
$\beta\in\pi_{10}$, $b\in\pi_{27}$,
and 
\[
\sim_{TMF}\, =\begin{cases}
c_4^3-c_6^2=576\cdot3\Delta,\\
3\alpha=3\beta=3b=0,\\
\alpha\cdot3\Delta=\alpha\cdot3\Delta^2
=\beta\cdot3\Delta=\beta\cdot3\Delta^2=0,\\
\alpha^2=\alpha\beta^2=\beta^5=0,\\
c_4\alpha=c_6\alpha=
c_4\beta=c_6\beta=
c_4b=c_6b=0.
\end{cases}
\]
\end{lem}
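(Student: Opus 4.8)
Since Lemma \ref{TMFHOMOTOPY} records the Hopkins--Miller computation, the natural strategy is to run the descent spectral sequence \eqref{ANSSETWO}, whose $E_2$-term is the Hopf algebroid cohomology $\Ext^{*,*}_\Gamma(B,B)$, and then to pass from $E_\infty$ to $\pi_*TMF_{(3)}$ by resolving extensions. First I would compute $\Ext^{*,*}_\Gamma(B,B)$: the zeroth line $\Ext^0_\Gamma(B,B)$ is the ring of $3$-local modular forms $\ZLOC[c_4,c_6,\Delta^{\pm1}]/(c_4^3-c_6^2=1728\Delta)$ (accounting for $c_4\in\pi_8$, $c_6\in\pi_{12}$ and the invertible discriminant), and after inverting $3$ the higher cohomology vanishes. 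The $3$-primary torsion is supported on the supersingular locus and is generated over the modular forms by $\alpha\in\Ext^{1,2}$ (detecting $\pi_3$) and $\beta\in\Ext^{2,6}$ (detecting $\pi_{10}$); a cobar or Bockstein computation yields the relations $3\alpha=3\beta=0$, $\alpha^2=0$, and $c_4\alpha=c_6\alpha=c_4\beta=c_6\beta=0$, with powers of $\Delta$ acting freely on the torsion at this stage.

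Next come the differentials. In the weight grading a $d_{2r+1}$ raises filtration by $2r+1$ and weight by $r$, and the only nonzero differentials (up to units and multiplication) are $d_5(\Delta)=\alpha\beta^2$ and $d_9(\alpha\Delta^2)=\beta^5$. Because $\alpha,\beta$ are $3$-torsion we have $d_5(3\Delta)=3\alpha\beta^2=0$ and $d_5(\Delta^3)=3\Delta^2\alpha\beta^2=0$ while $d_5(\Delta),d_5(\Delta^2)\neq0$, which is exactly what forces $3\Delta,3\Delta^2,\Delta^3$ (and $\Delta^{-3}$) to be the surviving powers of $\Delta$. These same differentials hit their targets and thereby impose the relations $\alpha\beta^2=0$ and $\beta^5=0$, and the class $\alpha\Delta\in\Ext^{1,14}$ survives (since $d_5(\alpha\Delta)=\alpha^2\beta^2=0$) even though $\Delta$ itself does not, producing the new generator $b\in\pi_{27}$.

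Finally I would resolve extensions to recover the ring. The discriminant relation becomes $c_4^3-c_6^2=1728\Delta=576\cdot3\Delta$ (explaining the coefficient $576=1728/3$); the products $\alpha\cdot3\Delta,\beta\cdot3\Delta,\alpha\cdot3\Delta^2,\beta\cdot3\Delta^2$ and $c_4b,c_6b$ vanish for torsion and degree reasons (so in particular $3b=\alpha\cdot3\Delta=0$, confirming that $b$ is not a product of permanent cycles). I expect the main obstacle to be exactly this last stage together with establishing the differentials: verifying $d_5$ and $d_9$ needs genuine input beyond algebra, such as comparison with $\alpha_1,\beta_1$ in the sphere or the known chromatic/$K(2)$-local structure, and resolving the hidden multiplicative extensions that pin down $b$ and the exact coefficient in $c_4^3-c_6^2=576\cdot3\Delta$ is the delicate part.
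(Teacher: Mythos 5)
The paper does not actually prove this lemma: it is recorded as the known Hopkins--Miller computation, with the proof deferred entirely to the cited expository accounts (Bauer, Henriques, Mathew). Your sketch is, in outline, exactly the argument of those references, and the details you do supply check out: $\Ext^{0}_{\Gamma}(B,B)=\ZLOC[c_4,c_6,\Delta^{\pm1}]/(c_4^3-c_6^2-1728\Delta)$ agrees with Lemma \ref{basisTMF}(a); the bidegree bookkeeping for $\alpha\in\Ext^{1,2}$, $\beta\in\Ext^{2,6}$, $d_5(\Delta)\doteq\alpha\beta^2$, $d_9(\alpha\Delta^2)\doteq\beta^5$ is consistent with the convention $E_2^{s,t}\Rightarrow\pi_{2t-s}$; the Leibniz computations $d_5(3\Delta)=d_5(\Delta^3)=0$ versus $d_5(\Delta),d_5(\Delta^2)\neq0$ correctly account for the surviving powers $3\Delta$, $3\Delta^2$, $\Delta^{\pm3}$; and $b$ detected by the surviving class $\alpha\Delta$ gives $\pi_{27}$. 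You are also right to flag where the real content lies: the two differentials require input external to the Hopf-algebroid algebra (comparison with $\alpha_1$, $\beta_1$ and the Toda relation in the sphere, or with the $K(2)$-local fixed-point spectra), and the extension problems ($3b=0$, the vanishing of $\alpha\cdot3\Delta$, $c_4b$, $c_6b$ in higher filtration, and the hidden extension $b\alpha=\beta^3$ that the paper records in Figure \ref{EINFINITY}) must be settled by inspecting the sparse $E_\infty$-chart. As written your text is an outline rather than a complete proof, but it is the correct outline, and it supplies strictly more justification than the paper itself, which treats the lemma as an imported black box.
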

\begin{figure}[h!]
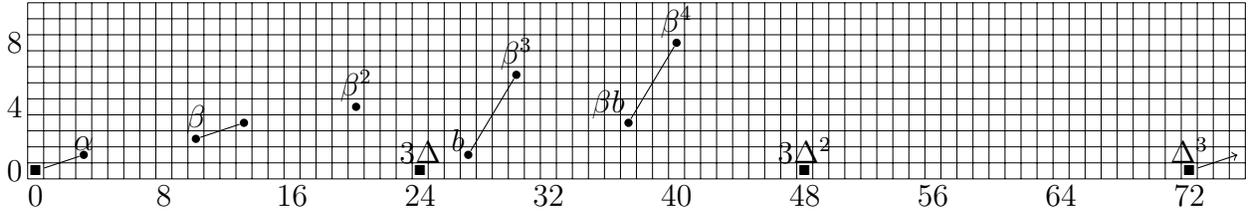

\centering
\begin{sseq}[entrysize=2.13mm,ylabelstep=4,xlabelstep=8]{0...75}{0...10}
\ssdrop{\sqbullet}
\ssmove 3 1
\ssdropbull
\ssdroplabel[U]{\alpha}
\ssstroke
\ssmoveto{10}{2} 
\ssdropbull
\ssdroplabel[U]{\beta}
\ssmove 3 1
\ssdropbull
\ssstroke
\ssmoveto{20}{4}
\ssdropbull
\ssdroplabel[U]{\beta^2}
\ssmoveto{24}0
\ssdrop{\sqbullet}
\ssdroplabel[U]{3\Delta}
\ssmoveto{27}{1}
\ssdropbull
\ssdroplabel[LU]{b}
\ssmoveto{30}6
\ssdropbull
\ssdroplabel[U]{\beta^3}
\ssstroke
\ssmoveto{37}3
\ssdropbull
\ssdroplabel[LU]{\beta b}
\ssmoveto{40}8
\ssdropbull
\ssstroke
\ssdroplabel[U]{\beta^4}
\ssmoveto{48}0
\ssdrop{\sqbullet}
\ssdroplabel[U]{3\Delta^2}
\ssmoveto{72}0
\ssdrop{\sqbullet}
\ssdroplabel[U]{\Delta^3}
\ssvoidarrow 3 1
\end{sseq}
\caption{The $E_{\infty}$-term of
the ANSS for $TMF_{(3)}$, $0\leq 2t-s\leq75$.}
\label{EINFINITY}
\end{figure}

The $E_{\infty}$-term
of \eqref{ANSSETWO} is shown 
(with some elements omitted)
in Figure \ref{EINFINITY}, with $s$ along
the vertical axis and $2t-s$ 
along the horizontal axis.  
A black square denotes a copy of $\ZLOC$
and a black circle denotes a copy of $\Z/(3)$.
Lines
of slope $1/3$ represent multiplication
by $\alpha$, and lines of slope 
$5/3$ represent multiplicative
extensions induced by the relation
$b\alpha=\beta^3$ in $\pi_{27}$.  
Not shown in the chart are most of the elements
on the line $s=0$; these elements will
be enumerated below in Subsection \ref{algebra}.
The portion of the $E_{\infty}$-term shown
is 72-periodic 
with invertible generator
$\Delta^3$.
\begin{rmk}\label{cobar}
The groups $\Ext^{*,*}_{\Gamma}(B,B)$
are encoded as the cohomology groups of the
{\em cobar resolution} for $(B,\Gamma)$,
a cochain complex of the form
\[
B\to\Gamma\to\Gamma\otimes\Gamma\to
\Gamma\otimes\Gamma\otimes\Gamma\to\cdots
\]
(\cite{Rav:MU}, A1.2.11)
where the differentials are defined in terms
of the structure maps of $(B,\Gamma)$.
For example, $\alpha\in\pi_3TMF_{(3)}$
is represented by $r\in\Gamma$ in the cobar
resolution; similarly,
$b\in\pi_{27}TMF_{(3)}$ is represented
by $r\Delta\in\Gamma$, and
$\beta\in\pi_{10}TMF_{(3)}$ is represented
by $r^2\otimes r-r\otimes r^2\in\Gamma\otimes\Gamma$.
Representatives for the other
torsion elements in $\pi_*TMF_{(3)}$ can be computed
via the cobar resolution pairing
defined in (\cite{Rav:MU}, A1.2.15).
\end{rmk}

\subsection{Coface maps and the BKSS setup}

To define the coface maps appearing
in Definition \ref{cosimplicial},
we will need four Hopf algebroid
maps:
\begin{align*}
\psi_d:(B,B)&\to(B,B),\\
\phi_f:(B,\Gamma)&\to(B,B),\\
\phi_q:(B,\Gamma)&\to(B,B),\\
\psitwo:(B,\Gamma)&\to(B,\Gamma).
\end{align*}
\begin{defn}\label{themaps}
The maps $\psi_d$, $\phi_f$,
$\phi_q$, and $\psitwo$
are induced by the following
maps of $\ZLOC$-algebras:
\begin{multicols}{4}
\noindent\begin{align*}
\psi_d:B&\to B\\
q_2&\mapsto-2q_2,\\
q_4&\mapsto q_2^2-4q_4.
\end{align*}
\begin{align*}
\phi_f:\Gamma&\to B\\
q_2&\mapsto q_2,\\
q_4&\mapsto q_4,\\
r&\mapsto 0.
\end{align*}
\begin{align*}
\phi_q:\Gamma&\to B\\
q_2&\mapsto -2q_2,\\
q_4&\mapsto q_2^2-4q_4,\\
r&\mapsto 0.
\end{align*}
\begin{align*}
\psitwo:\Gamma&\to\Gamma\\
q_2&\mapsto 4q_2,\\
q_4&\mapsto 16q_4,\\
r&\mapsto 4r.
\end{align*}
\end{multicols}
\end{defn}
\begin{rmk}
These maps 
are defined in terms of certain maneuvers with
Weierstrass equations of elliptic curves,
and are therefore most naturally defined
in the context of algebraic stacks. 
We shall not discuss these maneuvers here, 
but the details can be found in 
(\cite{Beh:Mod}, Section 1.5) and are recapitulated
in (\cite{Larson:ANSS}, Section 3.3).
\end{rmk}
We may now define the coface maps.  We do so
on the level of Hopf algebroids, and the maps
in Definition \ref{cosimplicial} are the corresponding
maps of spectra (which, by abuse
of notation, will be denoted
using the same symbols).
\begin{defn}
The map
out of $TMF_{(3)}$ is induced by
the alternating sum
$d_0-d_1$, where
\[
d_0:=\phi_q\oplus\psitwo,\quad
d_1:=\phi_f\oplus 1_{(B,\Gamma)},
\]
and the map into $TMF_0(2)_{(3)}$
is induced by the alternating sum
$\overline{d_0}-\overline{d_1}+
\overline{d_2}$, where
\[
\overline{d_0}:=\psi_d,\quad
\overline{d_1}:=\phi_f,\quad
\overline{d_2}:=1_{(B,B)}.
\]
\end{defn}

To elucidate the structure of the BKSS
for $\QTWO$, we start with an observation
of Behrens \cite{Beh:Cong} about the
$E_1$-term, namely that
\[
\,_{BK}E_1^{s,t}\QTWO=\pi_t\QTWO^s\Rightarrow
\pi_{t-s}\QTWO.
\]
In other words, the $E_1$-term is 
the semi-cosimplicial abelian group
$\pi_*\QTWO^{\bullet}=\mathcal G_*^{\bullet}$.
The $E_2$-term
is therefore obtained by taking
cohomology of 
$\mathcal G_*^{\bullet}$ at each of its
three nontrivial stages.
Within the general framework of the Bousfield-Kan
technology, 
the relevant notion is that of cohomotopy.
\begin{defn}\label{cohomotopy}
The {\em $s$th cohomotopy group} 
$\pi^sG^{\bullet}$
of a a (semi-)cosimplicial abelian group
$G^{\bullet}$ is given by
\[
\pi^sG^{\bullet}:=H^sG^{\bullet}
\]
where cohomology is taken with respect to
the alternating sums of the coface maps
of $G^{\bullet}$.
\end{defn}
Therefore,  
\[
\,_{BK}E_2^{s,t}\QTWO=\pi^s\pi_t\QTWO^{\bullet}
=H^s\pi_t\QTWO^{\bullet}
\]
and since $\pi_tTMF_0(2)_{(3)}=B_{t/2}$
by Lemma \ref{ANSSFORTMF}(b),
this proves Proposition \ref{BKSSETWO}.

\subsection{Filtration of $\pmb{\mathcal G_*^{\bullet}}$}
We now define the filtration $\mathcal G_*^{\bullet}
=F^0\supset F^1\supset F^2$ asserted to exist in Proposition
\ref{filtration}.  For ease of notation, we will henceforth
denote by 1 the maps $1_{(B,B)}$, $1_{(B,\Gamma)}$,
and any maps they induce or correspond to; 
the meaning will be clear
from the context.

The filtration we want is $F^1=(\pi_*TMF_{(3)}
\xrightarrow{\psitwo-1}\pi_*TMF_{(3)}\to0)$
and $F^2$ is the trivial complex.
The induced short exact sequence is given by
\[
\xymatrix{
C': & 0\ar[r]\ar[d] 
    & B_{\,}\ar[r]^{\psi_d+1}\ar@{^{(}->}[d] 
    & B\ar@{=}[d] \\
\mathcal G_*^{\bullet}:  & \pi_*TMF_{(3)}\ar[r]\ar@{=}[d] 
    & B\oplus \pi_*TMF_{(3)}\ar[r]\ar@{>>}[d] 
    & B\ar[d]\\
C'':& \pi_*TMF_{(3)}\ar[r]^{\psi_{[2]}-1} & 
\pi_*TMF_{(3)}\ar[r] & 0
}
\]
which leads to the following definition
of the maps $g$ and $h$ appearing in the statement
of Proposition \ref{filtration}.
\begin{defn}\label{defgh}
$g:=\psitwo-1:\pi_*TMF_{(3)}\to\pi_*TMF_{(3)}$, 
$h:=\psi_d+1:B\to B$.
\end{defn}
With the exception of the splitting
of the short exact sequence \eqref{hone},
Proposition \ref{filtration} follows from
standard homological algebra (see, e.g.,
Section 1.3 of \cite{Weibel}).  The connecting 
map $\delta^0$ is the restriction of $\phi_q-\phi_f$
to $\ker g$, while the connecting map $\delta^1$
is induced by $-\phi_f$.

\subsection{Algebraic properties of $\pmb{B_*}$ and 
$\pmb{\pi_*TMF_{(3)}}$}\label{algebra}

In this subsection we
lay the algebraic groundwork
for our computation by examining
the rings $B_*$ and $\pi_*TMF_{(3)}$.
The results and notation from this
subsection parallel those in 
(\cite{Larson:ANSS}, Subsection 3.2).

We start with $B_*$, where
we define a new elements $\sigma,\tau\in B_4$
as follows (cf.\ \cite{Larson:ANSS}, Eq.\ (10)):
\[ 
\sigma:=8q_4,\quad
\tau:=\frac{16q_2^2-64q_4}8.
\]
\begin{defn}\label{evectors}
For $i<j\in\mathbb Z$, 
$a_{i,j}:=\sigma^i\tau^j-\sigma^j\tau^i$ and
$b_{i,j}:=a_{i,j}q_2$.
\end{defn}
The elements $\{a_{i,j}\}$ and $\{b_{i,j}\}$
from Definition \ref{evectors} are collectively
a subset of a basis of eigenvectors for $B$ with respect
to the map $h$; this was proven in (\cite{Larson:ANSS},
Subsection 4.3).
The following definition gives a convenient
enumeration of these elements, particularly for our
study of the connecting map
$\delta^1$ in Subsection \ref{deltaonethree}.
\begin{defn}\label{defnAB} For $0\leq v\in\Z$ and $m\in\Z$,
\[
A_v^m:=a_{\floor{\frac{m-1}2}-v,\ceil{\frac{m+1}2}+v},\quad
B_v^m:=b_{\floor{\frac{m-1}2}-v,\ceil{\frac{m+1}2}+v}.
\]
\end{defn}
The following lemma is a restatement
of (\cite{Larson:ANSS}, Lemma 3).
\begin{lem}\label{bdirectsum}
${\displaystyle
B=\bigoplus_{v\geq0,m\in\Z}\ZLOC\{A_v^m,B_v^m\}
}$
\end{lem}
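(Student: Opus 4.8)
The plan is to produce an explicit $\ZLOC$-basis of $B$ adapted to the map $h=\psi_d+1$ and then to recognize $\{A_v^m,B_v^m\}$ as a reindexing of that basis. First I would record the module structure of $B$. Working $3$-locally, the change of variables $\sigma=8q_4$, $\tau=2q_2^2-8q_4$ is invertible, giving $q_4=\sigma/8$ and $q_2^2=(\sigma+\tau)/2$; hence $\ZLOC[q_2,q_4]=\ZLOC[\sigma,\tau]\{1,q_2\}$ as a module over the (polynomial) subring $\ZLOC[\sigma,\tau]=\ZLOC[q_2^2,q_4]$, in which $\sigma,\tau$ are algebraically independent. Since $\Delta=\sigma^2\tau/8$, inverting $\Delta$ inverts both $\sigma$ and $\tau$, so
\[
B=\ZLOC[\sigma^{\pm1},\tau^{\pm1}]\{1,q_2\},
\]
which is free over $\ZLOC$ on the monomials $\{\sigma^i\tau^j,\ \sigma^i\tau^j q_2:i,j\in\Z\}$.

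Next I would diagonalize $h$ on this basis. A direct computation from Definition \ref{themaps} gives $\psi_d(\sigma)=4\tau$ and $\psi_d(\tau)=4\sigma$, so $\psi_d(\sigma^i\tau^j)=4^{i+j}\sigma^j\tau^i$; together with $\psi_d(q_2)=-2q_2$ this exhibits $\psi_d$ as the $\sigma\leftrightarrow\tau$ swap up to the scalars $4^{i+j}$ and $-2$. The antisymmetric combinations are therefore eigenvectors: one checks $\psi_d(a_{i,j})=-4^{i+j}a_{i,j}$ and $\psi_d(b_{i,j})=2\cdot4^{i+j}b_{i,j}$, whence
\[
h(a_{i,j})=(1-4^{i+j})a_{i,j},\qquad h(b_{i,j})=(1+2\cdot4^{i+j})b_{i,j}.
\]
This identifies the $a_{i,j}$ and $b_{i,j}$ (for $i<j$) as precisely the $h$-eigenvectors singled out in the eigenbasis of (\cite{Larson:ANSS}, Subsection 4.3).

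The remaining work is to verify independence and to match the enumeration. Linear independence is immediate from the monomial expansion: distinct unordered pairs $\{i,j\}$ yield elements $a_{i,j}$ with disjoint two-term support (coefficients $\pm1$), and the $b_{i,j}$ live in the complementary $q_2$-summand, so $\{a_{i,j},b_{i,j}:i<j\}$ is $\ZLOC$-linearly independent. For the indexing I would check that $(v,m)\mapsto(i,j)=(\floor{\frac{m-1}2}-v,\ceil{\frac{m+1}2}+v)$ is a bijection from $\{v\ge0,\ m\in\Z\}$ onto $\{i<j\in\Z\}$: a short parity case-check shows $i+j=m$ in all cases, while $j-i$ runs through exactly the positive integers of parity congruent to $m$ as $v\ge0$ varies, so every pair $i<j$ is hit once. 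Thus $A_v^m=a_{i,j}$ and $B_v^m=b_{i,j}$ range without repetition over all $a_{i,j},b_{i,j}$ with $i<j$, regraded by total degree $4m$ (resp.\ $4m+2$) and spread $v$.

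The main obstacle is the spanning statement — that these antisymmetric eigenvectors exhaust a $\ZLOC$-basis of $B$ in each internal degree, rather than merely spanning a summand. This is the precise content imported from (\cite{Larson:ANSS}, Subsection 4.3), and the step I expect to demand the most care is verifying that the change of basis from the monomials $\{\sigma^i\tau^j,\sigma^i\tau^j q_2\}$ to the eigenvectors $\{a_{i,j},b_{i,j}\}$ is unimodular over $\ZLOC$ degree-by-degree, so that no eigenvector is omitted. Granting that input, the eigenvalue computations above, the reindexing bijection, and the freeness of $B$ assemble into the direct-sum decomposition of Lemma \ref{bdirectsum}.
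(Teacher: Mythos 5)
The paper offers no argument for this lemma at all---it is imported verbatim as a restatement of (\cite{Larson:ANSS}, Lemma 3)---so your attempt to reconstruct a proof already goes beyond the text, and most of what you write is correct: the identification $B=\ZLOC[\sigma^{\pm1},\tau^{\pm1}]\{1,q_2\}$, the formulas $\psi_d(\sigma)=4\tau$, $\psi_d(\tau)=4\sigma$, $\psi_d(q_2)=-2q_2$, the resulting eigenvalues $1-4^{i+j}$ and $1+2\cdot4^{i+j}$ of $h$ on $a_{i,j}$ and $b_{i,j}$, and the verification that $(v,m)\mapsto\bigl(\floor{\frac{m-1}2}-v,\ \ceil{\frac{m+1}2}+v\bigr)$ bijects onto $\{(i,j):i<j\}$ are all right.

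The gap sits exactly at the step you flag and then defer: the spanning claim is not merely delicate, it is false, and the unimodular change of basis you hope for does not exist. Your own observation that each $a_{i,j}$ has two-term support with coefficients $\pm1$ already shows this: any $\ZLOC$-combination of $\{a_{i,j}:i<j,\ i+j=m\}$ has, for each $k$, coefficients on $\sigma^k\tau^{m-k}$ and $\sigma^{m-k}\tau^{k}$ that are negatives of each other, so the span is precisely the $(-1)$-eigenspace of the involution $\sigma\leftrightarrow\tau$ on $B_{4m}$ and misses every symmetric combination $\sigma^i\tau^j+\sigma^j\tau^i$ as well as the diagonal monomials; for instance $1=\sigma^0\tau^0\in B_0$ does not lie in $\bigoplus_v\ZLOC\{A_v^0\}$, and the same applies to the $b_{i,j}$ in $B_{4m+2}$. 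The statement that can actually be proved---and the one consistent with the paper's own remark preceding Definition \ref{defnAB} that the $a_{i,j}$ and $b_{i,j}$ are only ``a subset of a basis'' of eigenvectors---is that $\bigoplus_{v\geq0,m\in\Z}\ZLOC\{A_v^m,B_v^m\}$ is a direct \emph{summand} of $B$, with complement the $(+1)$-eigenspace spanned by the elements $\sigma^i\tau^j+\sigma^j\tau^i$, $\sigma^i\tau^i$, and $q_2$ times these. On that complement $h=\psi_d+1$ acts by the scalars $1+4^m$ and $1-2\cdot4^m$, which are units in $\ZLOC$, so it contributes nothing to $\ker h$ or $\coker h$; this is why the overstatement is invisible in Proposition \ref{hprop} and in the rest of the computation. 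Your write-up should prove the direct-summand statement (your eigenvalue computations essentially already do) rather than attempt to import a spanning statement that cannot hold.
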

Next we record some results on $\pi_*TMF_{(3)}$.
Specifically, we will focus on elements
in $\,_{AN}E_{\infty}^{0,*}TMF_{(3)}$; that is,
elements on the 0-line of the ANSS for $TMF_{(3)}$.
\begin{lem}\label{TMFBASIS}
If
\[
I_{TMF}:=\{(n,\epsilon,\ell_1,
\ell_2,\ell_3):0\leq n\in\Z,\epsilon\in
\{0,1\},(\ell_1,\ell_2)\in
\{(0,0),(0,1),(1,0)\},\ell_3\in\Z
\},
\]
then
\[
\ _{AN}E_{\infty}^{0,*}TMF_{(3)}
=\ZLOC\{c_4^n\cdot c_6^{\epsilon}\cdot
[3\Delta]^{\ell_1}\cdot
[3\Delta^2]^{\ell_2}\cdot
[\Delta^3]^{\ell_3}:(n,\epsilon,
\ell_1,\ell_2,\ell_3)\in I_{TMF}\}.
\]
\end{lem}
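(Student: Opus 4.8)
The plan is to identify $\,_{AN}E_\infty^{0,*}TMF_{(3)}$ with the torsion-free part of $\pi_*TMF_{(3)}$ and then exhibit the stated monomials as a $\ZLOC$-basis by a two-step free-module argument. First I would observe, from Figure \ref{EINFINITY} together with Lemma \ref{TMFHOMOTOPY}, that every $\ZLOC$-summand (black square) of the $E_\infty$-term lies on the line $s=0$, while all torsion (black circles) sits in strictly positive filtration. Hence $\,_{AN}E_\infty^{0,*}TMF_{(3)}$ is the filtration-$0$ quotient $F^0/F^1$ of $\pi_*TMF_{(3)}$, which in even degrees is precisely the torsion-free quotient, obtained by killing the ideal generated by the torsion classes $\alpha,\beta,b$. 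Setting $\alpha=\beta=b=0$ in Lemma \ref{TMFHOMOTOPY} annihilates every relation in $\sim_{TMF}$ except $c_4^3-c_6^2=576\cdot3\Delta$, so that as a graded $\ZLOC$-algebra
\[
\,_{AN}E_\infty^{0,*}TMF_{(3)}\cong\ZLOC[c_4,c_6,3\Delta,3\Delta^2,\Delta^3,\Delta^{-3}]/(c_4^3-c_6^2-576\cdot3\Delta).
\]

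Next I would analyze the $\Delta$-subalgebra $R:=\ZLOC[3\Delta,3\Delta^2,\Delta^3,\Delta^{-3}]$ inside $\ZLOC[\Delta^{\pm1}]$. A monomial $[3\Delta]^a[3\Delta^2]^b[\Delta^3]^c$ equals $3^{a+b}\Delta^{a+2b+3c}$, so for a fixed exponent $N=a+2b+3c$ the minimal $3$-adic valuation attainable for the coefficient of $\Delta^N$ is realized by the unique $(\ell_1,\ell_2)\in\{(0,0),(0,1),(1,0)\}$ with $\ell_1+2\ell_2\equiv N\ (3)$, giving valuation $0$ when $N\equiv0$ and valuation $1$ when $N\equiv1,2\ (3)$. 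This shows $R=\bigoplus_{N\equiv0}\ZLOC\,\Delta^N\oplus\bigoplus_{N\equiv1,2}3\ZLOC\,\Delta^N$ is $\ZLOC$-free on the set $\{[3\Delta]^{\ell_1}[3\Delta^2]^{\ell_2}[\Delta^3]^{\ell_3}\}$ with $(\ell_1,\ell_2)$ in the above three-element set and $\ell_3\in\Z$, matching exactly the $(\ell_1,\ell_2,\ell_3)$-part of $I_{TMF}$.

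Finally I would adjoin $c_4,c_6$. Since $576\cdot3\Delta\in R$ and the defining relation is monic in $c_6$ (leading term $c_6^2$), the quotient is free over $R[c_4]$ on $\{1,c_6\}$, while $R[c_4]$ is free over $R$ on $\{c_4^n:n\geq0\}$. Composing these three layers of freeness yields a $\ZLOC$-basis of products $c_4^n c_6^\epsilon[3\Delta]^{\ell_1}[3\Delta^2]^{\ell_2}[\Delta^3]^{\ell_3}$ indexed precisely by $I_{TMF}$, and tracking internal degrees through $c_4\in\pi_8$, $c_6\in\pi_{12}$, $3\Delta\in\pi_{24}$, etc., confirms these land on the $0$-line in the asserted $t$-degrees.

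I expect the middle step to be the main obstacle: one must verify not merely that the $\Delta$-monomials \emph{span} $R$ but that they are \emph{independent} and minimal, which requires pinning down the exact coefficient ideal $3^{\nu_3}\ZLOC$ of each $\Delta^N$. The subtlety is that the naive generating set is redundant — for instance $(3\Delta)^2=3\cdot(3\Delta^2)$ and $(3\Delta)(3\Delta^2)=9\Delta^3$ — so one has to argue that exactly one residue class of exponents is carried by each of the three generators $1,3\Delta,3\Delta^2$ (over powers of $\Delta^3$) and that no smaller coefficient than $3$ is achievable in the nonzero residues. Once the coefficient structure of $R$ is locked down, adjoining $c_4,c_6$ via the monic relation is routine.
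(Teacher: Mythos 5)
Your argument is correct and follows the same route the paper takes implicitly: Lemma \ref{TMFBASIS} is stated without proof as a repackaging of the Hopkins--Miller computation recalled in Lemma \ref{TMFHOMOTOPY}, which is exactly the input you use. Your careful analysis of the subring $\ZLOC[3\Delta,3\Delta^2,\Delta^{3},\Delta^{-3}]\subset\ZLOC[\Delta^{\pm1}]$ (pinning down the coefficient ideal $3^{v(N)}\ZLOC$ of each $\Delta^N$, then adjoining $c_4$ and $c_6$ via the monic relation) supplies the detail the paper leaves to the reader, and it is consistent with the $\gamma$-coefficients recorded afterward in Lemma \ref{basisTMF}(b).
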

Lemma \ref{TMFBASIS} gives a basis for 
$\ _{AN}E_{\infty}^{0,*}TMF_{(3)}$ over $\ZLOC$.
The following lemma gives representatives
for these basis elements in $\Ext_{\Gamma}^{0,*}(B,B)$
so that we may compute the maps into/out of 
$\pi_*TMF_{(3)}$ (or subquotients thereof)
in the BKSS.  
\begin{lem}\label{basisTMF}
\begin{enumerate}[(a)]
\item $\Ext_{\Gamma}^{0,*}(B,B)
=\ZLOC[c_4,c_6,\Delta,\Delta^{-1}]/(1728\Delta=c_4^3-c_6^2)$
\item If
\[
\mathcal B^{\epsilon,m}_{TMF}
:=\{
\gamma c_4^nc_6^{\epsilon}\Delta^{\ell}:
n\geq0,\epsilon\in\{0,1\},\ell\in\Z,
n+\epsilon+3\ell=m
\}\subset\Ext_{\Gamma}^{0,*}(B,B)
\]
where 
\[\gamma:=
\begin{cases}
1, & \text{if }\ell\equiv 0\, (3),\\
3, & \text{otherwise,}
\end{cases}
\]
then
\[
\mathcal B_{TMF}:=\coprod_{m\in\Z, \epsilon\in\{0,1\}}
\mathcal B^{\epsilon,m}_{TMF}
\]
is a complete set of representatives for the 
$\ZLOC$-basis of $\ _{AN}E_{\infty}^{0,*}TMF_{(3)}$
given in Lemma \ref{TMFBASIS}.
\end{enumerate}
\end{lem}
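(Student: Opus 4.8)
The plan is to handle the two parts in sequence, with part (a) providing the structural input that reduces part (b) to bookkeeping. For part (a), I would first identify $\Ext_{\Gamma}^{0,*}(B,B)$ with the ring of invariants $\{x\in B:\eta_R(x)=\eta_L(x)\}$, i.e.\ the equalizer of the two unit maps sitting at the bottom of the cobar resolution of Remark \ref{cobar}. The right unit is governed by the coordinate change $x\mapsto x+r$: substituting into \eqref{Weier} and reducing modulo the defining relation $r^3+q_2r^2+q_4r=0$ of $\Gamma$ shows that the Weierstrass form is preserved and produces explicit formulas for $\eta_R(q_2)$ and $\eta_R(q_4)$ as polynomials in $q_2,q_4,r$. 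With these in hand, the next step is to write the Weierstrass invariants $c_4,c_6,\Delta$ as explicit polynomials in $q_2,q_4$ and to verify directly that each is fixed by $\eta_R$; note that $\Delta=q_4^2(16q_2^2-64q_4)$ is already invertible in $B$. Finally I would argue that these three elements generate the entire invariant ring subject only to $c_4^3-c_6^2=1728\Delta$. The cleanest route is to rationalize, where the invariant ring is the classical ring of weakly holomorphic modular forms $\mathbb Q[c_4,c_6,\Delta^{-1}]/(c_4^3-c_6^2=1728\Delta)$, and then to control the $3$-local integral structure. Since $\nu_3(1728)=3$, one cannot divide $\Delta$ by $1728$ integrally, so $\Delta$ is genuinely needed as a generator, and a normal-form argument (or direct appeal to the Hopkins--Miller identification recalled in \cite{Bau,Henriques,Mathew}) confirms that no further generators or relations appear.

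Part (b) then becomes a matching problem between two bases. By part (a), using the relation to rewrite every occurrence of $c_6^2$ as $c_4^3-1728\Delta$, the set $\{c_4^nc_6^\epsilon\Delta^\ell:n\geq0,\epsilon\in\{0,1\},\ell\in\Z\}$ is a $\ZLOC$-basis of $\Ext_{\Gamma}^{0,*}(B,B)$. On the other hand, Lemma \ref{TMFHOMOTOPY} records which multiples of these survive to the $E_\infty$-line: $\Delta^\ell$ is a permanent cycle precisely when $3\mid\ell$, while for $3\nmid\ell$ the minimal surviving multiple is $3\Delta^\ell$; equivalently, the surviving $0$-line is generated multiplicatively by $c_4,c_6,3\Delta,3\Delta^2,\Delta^3$, matching the basis of Lemma \ref{TMFBASIS}. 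I would then rewrite each monomial $c_4^nc_6^\epsilon[3\Delta]^{\ell_1}[3\Delta^2]^{\ell_2}[\Delta^3]^{\ell_3}$ from Lemma \ref{TMFBASIS} in terms of $c_4,c_6,\Delta$: the three admissible values of $(\ell_1,\ell_2)$ correspond exactly to the residue of the resulting exponent $\ell$ modulo $3$, and the explicit factors of $3$ carried by $3\Delta$ and $3\Delta^2$ are captured precisely by the coefficient $\gamma$ of the statement. This exhibits the assignment as a bijection onto $\mathcal B_{TMF}$, proving that $\mathcal B_{TMF}$ is a complete set of representatives.

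The main obstacle lies in part (a): establishing that $c_4,c_6,\Delta^{\pm1}$ generate the invariant ring with no relations beyond $c_4^3-c_6^2=1728\Delta$. Checking invariance of the generators is a finite computation, but ruling out additional generators and relations requires either the classical structure theory of modular forms together with careful tracking of $3$-local denominators, or a direct normal-form argument built on the eigenvector decomposition of $B$ from Lemma \ref{bdirectsum} and \cite{Larson:ANSS}. Once (a) is secured, part (b) is essentially combinatorial; the only delicate point there is the factor-of-$3$ bookkeeping, which rests on the precise list of permanent cycles on the $0$-line furnished by the known $TMF_{(3)}$ differentials.
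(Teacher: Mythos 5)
Your proposal is correct and ends up where the paper does: the paper disposes of part (a) by simply citing Deligne's computation of the ring of integral modular forms (your invariant-ring sketch is an outline of a proof of that citation, and you correctly isolate the generation step as the only nontrivial point, to be settled by the classical structure theory you invoke), while part (b) is exactly the factor-of-three bookkeeping you describe, matching $3^{\ell_1+\ell_2}c_4^nc_6^{\epsilon}\Delta^{\ell_1+2\ell_2+3\ell_3}$ against $\gamma c_4^nc_6^{\epsilon}\Delta^{\ell}$ via $\ell\equiv\ell_1+2\ell_2\ (3)$, which is all the paper means by ``follows from Lemma \ref{TMFHOMOTOPY}.'' No gaps.
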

\begin{proof}
Part (a) is proven in \cite{Del:Ell}.
Part (b) follows from Lemma \ref{TMFHOMOTOPY}.
\end{proof}
As we do throughout this paper, we shall
not distinguish between elements of 
$\mathcal B_{TMF}$ and the
homotopy elements they represent
in $\ _{AN}E_{\infty}^{0,*}TMF_{(3)}
\subset\pi_*TMF_{(3)}$.
In particular, we may interpret 
Lemma \ref{basisTMF} as saying that 
$\mathcal B_{TMF}$
{\em is} a $\ZLOC$-basis for
$\,_{AN}E_{\infty}^{0,*}TMF_{(3)}$.

We now give notation for the submodules 
of $\pi_*TMF_{(3)}$ spanned by the sets 
$\mathcal B^{\epsilon,m}_{MF}$ defined
in Lemma \ref{basisTMF}(b).
\begin{defn}\label{TMFSUBMODULES}
Given $\epsilon\in\{0,1\}$ and $m\in\Z$, 
\[
W^{\epsilon,m}:=\ZLOC\{
\mathcal B^{\epsilon,m}_{TMF}\}
\subset \,_{AN}E_{\infty}^{0,*}TMF_{(3)}.
\]
\end{defn}
\begin{lem}\label{zerolinedecomp}
As a $\ZLOC$-module,
${\displaystyle
\,_{AN}E_{\infty}^{0,*}TMF_{(3)}=
\bigoplus_{m\in\Z,\epsilon\in\{0,1\}} W^{\epsilon,m}}$.
\end{lem}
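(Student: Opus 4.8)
The plan is to deduce the decomposition directly from the basis statement already recorded in Lemma \ref{basisTMF}. The only structural fact I will need is elementary: if a free $\ZLOC$-module $M$ carries a basis $\mathcal B$ expressed as a disjoint union $\mathcal B=\coprod_i\mathcal B_i$, then $M=\bigoplus_i\ZLOC\{\mathcal B_i\}$ as an internal direct sum. Applying this with $M=\,_{AN}E_{\infty}^{0,*}TMF_{(3)}$, $\mathcal B=\mathcal B_{TMF}$, and the index $i=(\epsilon,m)$, the lemma reduces to two checks. First, that $\ZLOC\{\mathcal B^{\epsilon,m}_{TMF}\}$ is exactly $W^{\epsilon,m}$, which is Definition \ref{TMFSUBMODULES} verbatim. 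Second, that the family $\{\mathcal B^{\epsilon,m}_{TMF}\}$ genuinely partitions $\mathcal B_{TMF}$.

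Covering is immediate from Lemma \ref{basisTMF}(b): every element of $\mathcal B_{TMF}$ has the form $\gamma c_4^nc_6^{\epsilon}\Delta^{\ell}$ and hence lies in $\mathcal B^{\epsilon,m}_{TMF}$ for $m=n+\epsilon+3\ell$. The substantive point is pairwise disjointness, which I would establish through the internal grading on $\Ext_{\Gamma}^{0,*}(B,B)$. Since $c_4$, $c_6$, and $\Delta$ carry internal degrees $t=4,6,12$ respectively (and the scalar $\gamma\in\{1,3\}$ is in degree $0$), a representative $\gamma c_4^nc_6^{\epsilon}\Delta^{\ell}\in\mathcal B^{\epsilon,m}_{TMF}$ sits in internal degree $4n+6\epsilon+12\ell=4(n+\epsilon+3\ell)+2\epsilon=4m+2\epsilon$. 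Thus $\mathcal B^{\epsilon,m}_{TMF}$ is supported entirely in internal degree $4m+2\epsilon$, and the assignment $(\epsilon,m)\mapsto 4m+2\epsilon$ is injective: from $4m+2\epsilon=4m'+2\epsilon'$ with $\epsilon,\epsilon'\in\{0,1\}$ one reads off $\epsilon=\epsilon'$ modulo $2$ and then $m=m'$. Consequently distinct index pairs occupy distinct internal degrees, forcing the subsets to be pairwise disjoint.

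With the partition confirmed, the decomposition follows from the basis principle above. In fact this computation does slightly more: it identifies $W^{\epsilon,m}$ with the internal-degree-$(4m+2\epsilon)$ homogeneous summand of the graded module $\,_{AN}E_{\infty}^{0,*}TMF_{(3)}$ (odd internal degrees contributing nothing, as all exponents force even degree), so the claimed direct sum is simply the splitting of a graded module into its homogeneous pieces. I do not anticipate a genuine obstacle; the lemma is essentially bookkeeping resting on Lemma \ref{basisTMF}, and the only step meriting care is the degree computation pinning the internal degree of a representative at $4m+2\epsilon$, since that single calculation simultaneously guarantees disjointness and matches the index $(\epsilon,m)$ to the internal grading.
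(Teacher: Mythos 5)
Your proposal is correct and takes essentially the same route as the paper, whose proof is simply the one-line observation that the claim follows from Definition \ref{TMFSUBMODULES} together with the disjoint union decomposition of $\mathcal B_{TMF}$ in Lemma \ref{basisTMF}(b). The only thing you add is an explicit verification of the disjointness via the internal-degree count $4n+6\epsilon+12\ell=4m+2\epsilon$, which the paper leaves implicit in the $\coprod$ notation (and records separately in Remark \ref{degreecounting}).
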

\begin{proof}
This follows from Definition \ref{TMFSUBMODULES}
and the disjoint union decomposition of 
$\mathcal B_{TMF}$ given in Lemma \ref{basisTMF}(b).
\end{proof}
For a monomial 
$\gamma c_4^{n}
c_6^{\epsilon}
\Delta^{\ell}
\in\mathcal B_{TMF}^{\epsilon,m}$, let
$\ell^{\epsilon,m}$ be
the largest possible value of 
$\ell$.  Then
\[
\ell^{0,m}=\floor{\dfrac m3},\quad\ell^{1,m}=
\floor{\dfrac{m-1}3}.
\]
Using these quantities, we can 
enumerate the elements in $\mathcal B_{TMF}$ in
a way that is convenient for our
study of $\delta^1$ in Subsection \ref{deltaonethree}.
\begin{defn}\label{defnCD}
For $0\leq v\in\Z$ and $m\in\Z$,
\[
C_v^m:=c_4^{m-3\ell^{0,m}+3v}\Delta^{\ell^{0,m}-v},
\quad
D_v^m:=c_4^{m-3\ell^{1,m}+3v-1}c_6\Delta^{\ell^{1,m}-v}.
\]
\end{defn}
\begin{prop}\label{TMFBASES}
For $m\in\Z$,
\[\mathcal B_{TMF}^{0,m}
=\{\gamma_0C_0^m,
\gamma_1C_1^m,
\gamma_2C_2^m,\ldots\},\qquad\mathcal B_{TMF}^{1,m}=\{\theta_0D_0^m,\theta_1D_1^m,
\theta_2D_2^m,\ldots\}\]
where
\[
\{\gamma_0,\gamma_1,\gamma_2,\ldots\}:=
\begin{cases}
\{1,3,3,1,3,3,\ldots\}, & m\equiv0,1,2\ (9),\\
\{3,1,3,3,1,3,3,\ldots\}, & m\equiv3,4,5\ (9),\\
\{3,3,1,3,3,1,\ldots\},&\text{otherwise}
\end{cases}
\]
and
\[
\{\theta_0,\theta_1,\theta_2,\ldots\}:=
\begin{cases}
\{1,3,3,1,3,3,\ldots\}, & m\equiv1,2,3\ (9),\\
\{3,1,3,3,1,3,3,\ldots\}, & m\equiv4,5,6\ (9),\\
\{3,3,1,3,3,1,\ldots\},&\text{otherwise}.
\end{cases}
\]
\end{prop}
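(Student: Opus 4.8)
The plan is to read off both families directly from the description of $\mathcal B_{TMF}^{\epsilon,m}$ in Lemma \ref{basisTMF}(b) and then to track the coefficient $\gamma$ through the floor-function bookkeeping. Recall that $\mathcal B_{TMF}^{\epsilon,m}$ consists of the monomials $\gamma c_4^n c_6^\epsilon \Delta^\ell$ subject to $n\geq0$ and $n+\epsilon+3\ell=m$. Since $\epsilon$ and $m$ are fixed, each such monomial is determined by $\ell$ alone via $n=m-\epsilon-3\ell$, and the constraint $n\geq0$ is equivalent to $\ell\leq\ell^{\epsilon,m}$, where $\ell^{0,m}=\floor{m/3}$ and $\ell^{1,m}=\floor{(m-1)/3}$ are exactly the maximal admissible values recorded before Definition \ref{defnCD}. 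Thus $\ell$ runs over $\ell^{\epsilon,m},\ell^{\epsilon,m}-1,\ell^{\epsilon,m}-2,\dots$, and writing $\ell=\ell^{\epsilon,m}-v$ for $v\geq0$ gives a bijective enumeration of $\mathcal B_{TMF}^{\epsilon,m}$.

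First I would check that under this reparametrization the monomial attached to the index $v$ is precisely $C_v^m$ (when $\epsilon=0$) or $D_v^m$ (when $\epsilon=1$). This is a direct substitution: with $\ell=\ell^{0,m}-v$ one has $n=m-3(\ell^{0,m}-v)=m-3\ell^{0,m}+3v$, matching the $c_4$-exponent in Definition \ref{defnCD}, and the $\epsilon=1$ case reproduces the exponents of $D_v^m$ in the same way. In particular $n\geq0$ holds for every $v\geq0$ because $m-3\ell^{0,m}$ and $m-1-3\ell^{1,m}$ are the nonnegative residues of $m$ and $m-1$ modulo $3$, so the listing never overshoots into negative $c_4$-powers.

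Next I would determine the coefficient. By Lemma \ref{basisTMF}(b) the coefficient of $C_v^m$ or $D_v^m$ equals $1$ exactly when its $\Delta$-exponent $\ell=\ell^{\epsilon,m}-v$ is divisible by $3$ and equals $3$ otherwise; hence $\gamma_v=1$ (resp.\ $\theta_v=1$) if and only if $v\equiv\ell^{\epsilon,m}\pmod 3$. The three periodic patterns in the statement therefore amount to nothing more than computing $\ell^{0,m}$ and $\ell^{1,m}$ modulo $3$ as functions of $m$.

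The only real care needed is this last modular computation, which is where I would be most careful. Writing $m=9k+r$ gives $\floor{m/3}=3k+\floor{r/3}$, so $\ell^{0,m}\equiv\floor{r/3}\pmod 3$; this equals $0,1,2$ according as $r\in\{0,1,2\}$, $r\in\{3,4,5\}$, or $r\in\{6,7,8\}$, which is exactly the case division $m\equiv0,1,2$, then $m\equiv3,4,5$, then otherwise modulo $9$, and yields the stated sequences $\{1,3,3,\dots\}$, $\{3,1,3,3,\dots\}$, $\{3,3,1,\dots\}$ for $\{\gamma_v\}$. The identical computation applied to $\ell^{1,m}=\floor{(m-1)/3}$ shifts the residue of $m$ down by one, producing the case division $m\equiv1,2,3$, then $m\equiv4,5,6$, then otherwise modulo $9$, together with the corresponding pattern for $\{\theta_v\}$. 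Since the enumeration by $v$ is a bijection onto $\mathcal B_{TMF}^{\epsilon,m}$ and the coefficients match in each residue class, the two displayed equalities follow. I do not anticipate any genuine obstacle here: the entire content is the translation of the divisibility condition on $\ell$ into periodicity in $m$ through the two floor functions, and the only way to go wrong is a sign or off-by-one slip in that translation.
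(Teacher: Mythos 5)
Your proposal is correct and follows essentially the same route as the paper's proof: both reduce the claim to the observation that $\gamma_v$ (resp.\ $\theta_v$) equals $1$ exactly when $\ell^{\epsilon,m}-v\equiv0\pmod 3$, and then compute $\ell^{0,m}=\floor{m/3}$ and $\ell^{1,m}=\floor{(m-1)/3}$ modulo $3$ as functions of $m$ modulo $9$. The only difference is that you carry out explicitly the ``elementary calculation'' (and the verification that the $v$-indexing bijectively enumerates $\mathcal B_{TMF}^{\epsilon,m}$) that the paper leaves to the reader.
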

\begin{proof}
Only the definitions of the sequences
$\{\gamma_0,\gamma_1,\gamma_2,\ldots\}$
and $\{\theta_0,\theta_1,\theta_2,\ldots\}$
require justification.  By Lemma \ref{basisTMF}(b)
and Definition \ref{defnCD}, the value of $\gamma_v$
in $\gamma_vC_v^m$ depends on whether $\LMAX-v$
is divisible by 3.  Since $\LMAX=\floor{m/3}$,
this 3-divisiblity in turn depends on the value
of $m$ modulo 9, as well as the value of $v$.  
An elementary calculation then shows that the above
definition of 
the sequence $\{\gamma_v\}$ is correct.  
An analogous argument justifies the above definition
of the sequence $\{\theta_v\}$.
\end{proof}
\begin{cor}\label{TMFZERO}
$\pi_0TMF_{(3)}=\ZLOC
\{C_0^0=1,3C_1^0,3C_2^0,
C_3^0,3C_4^0,3C_5^0,
C_6^0,3C_7^0,3C_8^0,\ldots\}
$.
\end{cor}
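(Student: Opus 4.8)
The plan is to realize Corollary \ref{TMFZERO} as the degree-zero specialization of Proposition \ref{TMFBASES}, after first identifying $\pi_0TMF_{(3)}$ with the filtration-zero line of its Adams--Novikov spectral sequence in the appropriate internal degree.

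First I would pin down which part of $\,_{AN}E_{\infty}^{0,*}TMF_{(3)}$ contributes to $\pi_0$. Under the convention $\Ext^{s,t}_{\Gamma}(B,B)\Rightarrow\pi_{2t-s}TMF_{(3)}$ of Lemma \ref{ANSSFORTMF}(a), the line $s=0$ feeds $\pi_{2t}$, so the relevant internal degree is $t=0$. A monomial $\gamma c_4^nc_6^{\epsilon}\Delta^{\ell}\in\mathcal B_{TMF}^{\epsilon,m}$ has internal degree $4n+6\epsilon+12\ell=4m+2\epsilon$, using the constraint $n+\epsilon+3\ell=m$ from Lemma \ref{basisTMF}(b). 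Hence $t=0$ forces $\epsilon=0$ and $m=0$, so the piece of the zero line landing in $\pi_0$ is exactly $W^{0,0}=\ZLOC\{\mathcal B_{TMF}^{0,0}\}$ in the notation of Definition \ref{TMFSUBMODULES}.

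Next I would verify that nothing from higher Adams--Novikov filtration survives into $\pi_0$, so that $\pi_0TMF_{(3)}=\,_{AN}E_{\infty}^{0,0}TMF_{(3)}=W^{0,0}$. By Lemma \ref{TMFHOMOTOPY} every torsion generator of $\pi_*TMF_{(3)}$ (namely $\alpha$, $\beta$, $b$ and their products) sits in a strictly positive stem, and Figure \ref{EINFINITY} shows that the $0$-stem of the $E_{\infty}$-page consists solely of the copy of $\ZLOC$ in filtration $0$. Thus there are no contributions to $\pi_0$ above the zero line.

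Finally I would apply Proposition \ref{TMFBASES} with $m=0$. Since $0\equiv0\pmod{9}$, the coefficient sequence is $\{\gamma_0,\gamma_1,\gamma_2,\ldots\}=\{1,3,3,1,3,3,\ldots\}$, yielding $\mathcal B_{TMF}^{0,0}=\{C_0^0,3C_1^0,3C_2^0,C_3^0,3C_4^0,3C_5^0,\ldots\}$; and $C_0^0=c_4^0\Delta^0=1$ is immediate from Definition \ref{defnCD} with $m=v=0$. Combining this with the two previous steps gives the asserted $\ZLOC$-basis. The only real bookkeeping is the internal-degree computation identifying $\pi_0$ with the $(\epsilon,m)=(0,0)$ family; once that is in place the corollary is a direct read-off of Proposition \ref{TMFBASES}, and I expect no genuine obstacle.
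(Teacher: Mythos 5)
Your proposal is correct and follows essentially the same route as the paper, which simply cites Proposition \ref{TMFBASES} together with the identification $\pi_0TMF_{(3)}={}_{AN}E_{\infty}^{0,0}TMF_{(3)}$; your internal-degree computation $t=4m+2\epsilon$ and the specialization to $(\epsilon,m)=(0,0)$ with $m\equiv0\ (9)$ are exactly the bookkeeping the paper leaves implicit. One small imprecision: because $\Delta^{-3}$ is inverted, torsion classes such as $\alpha\Delta^{-3}$ live in negative stems, so not every torsion element lies in a strictly positive stem --- but the $72$-periodicity visible in Figure \ref{EINFINITY} still shows none lands in stem $0$, so your conclusion stands.
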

\begin{proof}
This follows from Proposition \ref{TMFBASES}
and the fact that $\pi_0TMF_{(3)}=\,_{AN}E_{\infty}^{0,0}
TMF_{(3)}$.
\end{proof}
\begin{rmk}\label{degreecounting}
The enumerations in Definitions \ref{defnAB}
and \ref{defnCD} are analogous in terms of how 
the integer $m$ compares with the internal degree $t$.  Specifically,
\begin{equation}
\deg_t(A_v^m)=\deg_t(\gamma_vC^m_v)
=4m,\quad
\deg_t(B_v^m)=\deg_t(\theta_vD^m_v)
=4m+2.
\end{equation}
\end{rmk}

\section{Computation of the maps $\pmb{g}$ and $\pmb{h}$}
\label{theghmaps}
In this section we initiate our computation
of $\,_{BK}E_2^{*,*}\QTWO$ by computing
the kernel and cokernel of the maps $g:\pi_*TMF_{(3)}
\to\pi_*TMF_{(3)}$
and $h:B\to B$ from Definition \ref{defgh}.
These computations prove Proposition \ref{gandh}.

To begin, we record a useful result in 3-adic
analysis.
\begin{lem}[\cite{Larson:ANSS}, Lemma 8(a)]\label{adic}
If $0\neq n\in\Z$, then 
$\nu_3(4^n-1)=\nu_3(n)+1$.
\end{lem}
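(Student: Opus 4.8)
The plan is to recognize this as the prime-$3$ instance of the lifting-the-exponent phenomenon and to prove it by a short induction on $\nu_3(n)$. First I would dispose of the sign of $n$. Since $4$ is a unit in $\ZLOC$, the identity $4^{-n}-1=-4^{-n}(4^n-1)$ together with $\nu_3(4^{-n})=0$ gives $\nu_3(4^{-n}-1)=\nu_3(4^n-1)$, while $\nu_3(-n)=\nu_3(n)$; hence it suffices to treat $n>0$.

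Next, write $n=3^k m$ with $3\nmid m$, so that $\nu_3(n)=k$, and prove $\nu_3(4^{3^km}-1)=k+1$ by induction on $k$. For the base case $k=0$, factor
\[
4^m-1=(4-1)\left(4^{m-1}+\cdots+4+1\right)=3\cdot\sum_{i=0}^{m-1}4^i,
\]
and observe that $4\equiv1\ (3)$ forces $\sum_{i=0}^{m-1}4^i\equiv m\ (3)$; since $3\nmid m$ this sum is prime to $3$, giving $\nu_3(4^m-1)=1$.

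For the inductive step I would set $y=4^{3^{k-1}m}$ and use $4^{3^km}-1=y^3-1=(y-1)(y^2+y+1)$. By the inductive hypothesis $\nu_3(y-1)=k$, so we may write $y=1+3^k u$ with $3\nmid u$. The identity $y^2+y+1=(y-1)^2+3(y-1)+3$ then yields
\[
y^2+y+1=3^{2k}u^2+3^{k+1}u+3,
\]
and since $k\geq1$ every term but the last is divisible by $9$, so $y^2+y+1\equiv3\ (9)$ and $\nu_3(y^2+y+1)=1$. Adding valuations gives $\nu_3(4^{3^km}-1)=(k)+(1)=k+1$, completing the induction.

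The only delicate point---the main obstacle---is ensuring that the valuation of the quadratic factor $y^2+y+1$ is exactly $1$ rather than merely at least $1$. This is precisely where the hypothesis $k\geq1$ enters: it guarantees $\nu_3(y-1)\geq1$, which is what makes the two leading terms of $3^{2k}u^2+3^{k+1}u+3$ fall into higher powers of $3$. For $k=0$ this factorization gives no control, which is why the base case is handled separately by the geometric-sum computation above.
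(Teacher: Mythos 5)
Your proof is correct. Note that the paper itself offers no argument for this statement---it is simply imported by citation from (\cite{Larson:ANSS}, Lemma 8(a))---so there is no in-text proof to compare against; what you have supplied is a complete, self-contained verification of the cited fact. Your reduction to $n>0$ via $4^{-n}-1=-4^{-n}(4^n-1)$ is valid (the quantity $4^n-1$ for $n<0$ is still a $3$-local integer, so $\nu_3$ applies), the base case correctly uses $4\equiv1\ (3)$ to see that the geometric sum is prime to $3$, and the inductive step correctly isolates the key point: for $k\geq1$ the identity $y^2+y+1=(y-1)^2+3(y-1)+3$ forces $\nu_3(y^2+y+1)=1$ exactly, not merely $\geq1$. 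This is the standard lifting-the-exponent argument, and every step checks out.
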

Next, we compute the effect of $g$ on
an element of $\pi_*TMF_{(3)}$ (cf. \cite{Larson:ANSS},
Eq.\ (21)).
\begin{lem}\label{geffect}
If $x\in\pi_{2t-s}TMF_{(3)}$ is represented
by an element of $\Ext_{\Gamma}^{s,t}(B,B)$
in the spectral sequence \eqref{ANSSETWO},
then
\[
g(x)=(2^t-1)x.
\]
\end{lem}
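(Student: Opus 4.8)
The plan is to trace how the map $g = \psitwo - 1$ acts on the homotopy of $TMF_{(3)}$ by first understanding its action on the representing $\Ext$ groups, then pushing that action down to the homotopy level via the spectral sequence \eqref{ANSSETWO}. The key observation is that $g$ is induced (on Hopf algebroids) by $\psitwo - 1$, where $\psitwo:\Gamma\to\Gamma$ is the algebra map sending $q_2\mapsto 4q_2$, $q_4\mapsto 16q_4$, $r\mapsto 4r$. Since $\deg q_2 = 2$, $\deg q_4 = 4$, and $\deg r = 2$ (so that each doubles in internal degree matches the scaling by $4 = 2^2$), the map $\psitwo$ scales a monomial of internal degree $t$ by exactly $2^t$. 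I would first verify this scaling claim at the level of the cobar complex for $(B,\Gamma)$.

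First I would make the degree bookkeeping precise: for a monomial $q_2^a q_4^b r^c$ the internal degree is $t = 2a + 4b + 2c$, and $\psitwo$ multiplies this monomial by $4^{a}\cdot 16^{b}\cdot 4^{c} = 2^{2a+4b+2c} = 2^t$. This shows that on every internal-degree-$t$ component of $\Gamma$ (and of each tensor power $\Gamma^{\otimes n}$ appearing in the cobar resolution described in Remark \ref{cobar}), the map induced by $\psitwo$ is multiplication by $2^t$. Next I would observe that multiplication by the scalar $2^t$ commutes with the cobar differentials — since the differentials are built from structure maps of $(B,\Gamma)$ and $\psitwo$ is a map of Hopf algebroids, the induced map is a cochain map — so it descends to multiplication by $2^t$ on each cohomology group $\Ext^{s,t}_\Gamma(B,B)$. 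Consequently, on the $E_2$-term of \eqref{ANSSETWO} the map $g = \psitwo - 1$ acts as multiplication by $2^t - 1$.

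The remaining step is to transfer this from $\Ext$ to $\pi_*TMF_{(3)}$. Here the crucial point is that multiplication by the integer $2^t - 1$ is compatible with the entire spectral sequence: since $g$ is induced by a genuine map of spectra, it acts on $\pi_*TMF_{(3)}$ and respects the associated-graded filtration coming from \eqref{ANSSETWO}. If $x\in\pi_{2t-s}TMF_{(3)}$ is detected by a class in $\Ext^{s,t}_\Gamma(B,B)$, then on that filtration quotient $g$ agrees with multiplication by $2^t-1$; because $2^t - 1$ is an honest integer (hence acts compatibly across all filtration levels), this forces $g(x) = (2^t-1)x$ on the nose, not merely up to higher filtration. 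I would phrase this last reduction carefully using the internal-degree grading so there is no ambiguity about which $t$ governs a given homotopy class.

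The main obstacle I anticipate is the last paragraph: passing from the associated-graded statement to an exact identity on $\pi_*$. One must ensure the integer $2^t-1$ truly acts as claimed and that no filtration-jumping correction terms appear. The cleanest resolution is to note that $\psitwo$, and hence $g$, is $\ZLOC$-linear and that multiplication by a fixed integer commutes with all differentials and extensions in the spectral sequence; thus the scalar identity established on $E_2$ lifts verbatim to $\pi_*TMF_{(3)}$. I would also double-check the degree conventions — in particular that the internal degree $t$ in $\Ext^{s,t}$ corresponds to topological degree $2t-s$ — so that the exponent in $2^t-1$ is unambiguous.
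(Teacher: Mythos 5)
Your argument is essentially the paper's: the proof there likewise takes a cobar representative $x'\in(\Gamma^{\otimes s})_t$ and observes that the formulas of Definition \ref{themaps} make $\psitwo$ act by $2^t$ on internal degree $t$, so $g=\psitwo-1$ acts by $2^t-1$. Your explicit monomial bookkeeping ($4^a16^b4^c=2^{2a+4b+2c}$) and your discussion of descending from the cobar complex to $\pi_*$ are elaborations of steps the paper leaves implicit, so the proposal is correct and follows the same route.
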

\begin{proof}
Suppose the representative of $x$ 
in $\Ext_{\Gamma}^{s,t}(B,B)$ 
is itself represented in the cobar
resolution (see Remark \ref{cobar})
by $x'\in(\Gamma^{\otimes s})_t$.
Since $g=\psitwo-1$, the formulas
from Definition \ref{themaps} 
imply that
\[
g:x'\mapsto(2^t-1)x'.
\]
\end{proof}
\begin{cor}\label{gcor}
$\ker g=\pi_0TMF_{(3)}\oplus
(\pi_*TMF_{(3)})_T$, and
\[
\coker g=\pi_*TMF_{(3)}/\sim_{\coker g}
\]
where $\sim_{\coker g}$ denotes the set of all
relations of the form $3^{\nu_3(t)+1}\cdot x=0$
for $x$ represented by an element in $\Ext^{0,t}_{\Gamma}
(B,B)$ with $t\neq0$.
\end{cor}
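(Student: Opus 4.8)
The plan is to extract the entire statement from Lemma~\ref{geffect}, which tells us that $g$ acts on a class represented by $\Ext^{s,t}_{\Gamma}(B,B)$ as multiplication by the integer $2^t-1$, and then to combine this with two pieces of bookkeeping. The first is the additive splitting $\pi_*TMF_{(3)}=\,_{AN}E_{\infty}^{0,*}TMF_{(3)}\oplus(\pi_*TMF_{(3)})_T$ as graded $\ZLOC$-modules, where the first summand is the free $0$-line and the second is the torsion living in filtrations $s\geq1$; this is immediate from Lemma~\ref{TMFHOMOTOPY} (equivalently Figure~\ref{EINFINITY}), since the torsion is simple $3$-torsion and the $0$-line is free, so no additive extension mixes the two. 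I would also record, by direct inspection of Lemma~\ref{TMFHOMOTOPY}, that every $E_{\infty}$-class is represented in an \emph{even} internal degree $t$, and that $\Ext^{0,t}_{\Gamma}(B,B)=0$ for odd $t$. The second input is the valuation identity $\nu_3(2^t-1)=\nu_3(t)+1$ for every even $t\neq0$: writing $t=2u$ gives $2^t-1=4^u-1$, so Lemma~\ref{adic} yields $\nu_3(2^t-1)=\nu_3(u)+1=\nu_3(t)+1$, using $\nu_3(2)=0$. In particular $3\mid(2^t-1)$ whenever $t\neq0$ is even, whereas $2^0-1=0$.

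For the kernel I would treat the two summands separately. On the free summand a class $x\in\Ext^{0,t}_{\Gamma}(B,B)$ has $g(x)=(2^t-1)x$ by Lemma~\ref{geffect}, and since this summand is torsion-free the product vanishes exactly when $t=0$; as $\Ext^{0,t}_{\Gamma}(B,B)=0$ for odd $t$ and the $t=0$ part is $\pi_0TMF_{(3)}$, this gives $\ker g\cap\,_{AN}E_{\infty}^{0,*}TMF_{(3)}=\pi_0TMF_{(3)}$. On the torsion summand each additive generator is a class $x$ in an even internal degree $t\neq0$ with $3x=0$, so $3\mid(2^t-1)$ forces $g(x)=(2^t-1)x=0$; since $g$ is additive it then annihilates the whole torsion submodule. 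Summing the two contributions yields $\ker g=\pi_0TMF_{(3)}\oplus(\pi_*TMF_{(3)})_T$.

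For the cokernel the same two facts show $g$ vanishes on the torsion part, so $\operatorname{im}g\subseteq\,_{AN}E_{\infty}^{0,*}TMF_{(3)}$, while on the free $0$-line $g$ kills internal degree $0$ and acts by $2^t-1$ in each even degree $t\neq0$. Hence $\coker g=(\pi_*TMF_{(3)})_T\oplus\pi_0TMF_{(3)}\oplus\bigoplus_{t\neq0}\Ext^{0,t}_{\Gamma}(B,B)/(2^t-1)$. Because $(2^t-1)=(3^{\nu_3(t)+1})$ as ideals of $\ZLOC$ by the valuation identity, each $t\neq0$ summand is $\Ext^{0,t}_{\Gamma}(B,B)$ reduced modulo $3^{\nu_3(t)+1}$, which is precisely the effect of imposing the relations $3^{\nu_3(t)+1}\cdot x=0$ for $x$ represented in $\Ext^{0,t}_{\Gamma}(B,B)$ with $t\neq0$. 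This identifies the quotient with $\pi_*TMF_{(3)}/\sim_{\coker g}$.

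The step I expect to require the most care is confirming that the degreewise scalar formula of Lemma~\ref{geffect}, stated for a class represented by a single $\Ext^{s,t}$, genuinely determines $g$ on all of $\pi_*TMF_{(3)}$: one must check that the free/torsion decomposition is $g$-invariant and that the multiplicative extensions visible in Figure~\ref{EINFINITY} (the $b\alpha=\beta^3$ relations) introduce no additive extensions that would spoil the action. Since the torsion is all simple $3$-torsion and $g$ is additive, it suffices that $g$ annihilate each torsion generator and act by $2^t-1$ on each free generator, which is exactly what Lemma~\ref{geffect} supplies; the valuation identity then does the rest.
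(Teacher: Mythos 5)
Your argument is correct and follows the same route as the paper: split $\pi_*TMF_{(3)}$ into the free $0$-line and the simple $3$-torsion, apply Lemma \ref{geffect} to see $g$ acts by $2^t-1$ on each piece, and use Lemma \ref{adic} to convert $2^t-1=4^{t/2}-1$ into the $3$-adic valuation $\nu_3(t)+1$. Your version is somewhat more careful than the paper's (which simply cites Lemmas \ref{adic} and \ref{geffect} for the cokernel), and your closing remark about checking that the filtrationwise scalar formula really determines $g$ additively is a reasonable point that the paper glosses over; the only cosmetic slip is writing $\Ext^{0,t}_{\Gamma}(B,B)/(2^t-1)$ where the free part of $\pi_{2t}TMF_{(3)}$ is the sublattice $\,_{AN}E_{\infty}^{0,t}TMF_{(3)}$, which does not affect the conclusion.
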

\begin{proof}
If $t=0$, then $2^t-1=0$.
By degree counting, the elements
in the ANSS for $TMF_{(3)}$ with $t$-degree 0
are exactly those with topological degree
$2t-s=0$.  Thus $\pi_0TMF_{(3)}\subset\ker g$
by Lemma \ref{geffect}.

Elements in $(\pi_*TMF_{(3)})_T$ have
$t$-degree nonzero and even, 
and they
generate copies of $\Z/(3)$.
Since $2^t-1=4^{t/2}-1$ is always divisible
by 3 for $t$ even,
Lemma \ref{geffect} implies $(\pi_*TMF_{(3)})_T\subset\ker g$.

The remaining elements in $\pi_*TMF_{(3)}$, namely the nontrivial
elements in $\,_{AN}E_{\infty}^{0,t}\QTWO$ for $t\neq0$, 
are non-torsion, and therefore are not in $\ker g$.
The formula for $\ker g$ follows.

The formula for $\coker g$ follows immediately
from Lemmas \ref{adic} and \ref{geffect}.
\end{proof}
\begin{rmk}\label{therelation}
The relation $\sim_{\coker g}$ has no effect 
on elements of $(\pi_*TMF_{(3)})_T$ since
they all generate copies of $\Z/(3)$; it also
has no effect on elements of $\pi_0TMF_{(3)}$
since they are precisely the elements whose
representatives in the ANSS lie in $t$-degree 0.
The affected elements are precisely the remaining
ones, namely those
whose representatives lie on the 0-line $\,_{AN}E_{\infty}^{0,t}
TMF_{(3)}$ with $t\neq0$.
\end{rmk}

The kernel and cokernel of $h$ were computed
by the author in \cite{Larson:ANSS}.
\begin{prop}[\cite{Larson:ANSS},
Proposition 3]\label{hprop}
\begin{enumerate}[(a)]
\item $\ker h=\Z_3\{a_{-i,i}:i\geq1\}$
\item ${\displaystyle\coker h=\bigoplus_{i<j\in\Z}\left(
\Z/3^{\nu_3(i+j)+1}\{a_{i,j}\}\oplus
\Z/3^{\nu_3(2i+2j+1)+1}\{b_{i,j}\}
\right)}$
\end{enumerate}
\end{prop}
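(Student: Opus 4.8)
The plan is to exploit the fact, recorded in Lemma \ref{bdirectsum}, that the elements $\{a_{i,j},b_{i,j}:i<j\in\Z\}$ (re-indexed as the $A_v^m,B_v^m$) form a $\ZLOC$-basis of $B$, and to observe that this basis diagonalizes $h=\psi_d+1$. First I would pin down the action of $\psi_d$ on the building blocks $\sigma=8q_4$, $\tau=(16q_2^2-64q_4)/8=2q_2^2-8q_4$, and $q_2$. A direct substitution using Definition \ref{themaps} gives $\psi_d(\sigma)=4\tau$, $\psi_d(\tau)=4\sigma$, and $\psi_d(q_2)=-2q_2$; the first two formulas say that $\psi_d$ interchanges $\sigma$ and $\tau$ up to the scalar $4$, and this is precisely what makes the $a_{i,j}$ and $b_{i,j}$ eigenvectors.

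From these three formulas the eigenvalues fall out at once. Since $\psi_d(\sigma^i\tau^j)=4^{i+j}\sigma^j\tau^i$, we get $\psi_d(a_{i,j})=-4^{i+j}a_{i,j}$, and multiplying by $q_2$ yields $\psi_d(b_{i,j})=2\cdot4^{i+j}b_{i,j}$. Hence on the basis of Lemma \ref{bdirectsum} the map $h$ is diagonal, acting by $h(a_{i,j})=(1-4^{i+j})a_{i,j}$ and $h(b_{i,j})=(1+2\cdot4^{i+j})b_{i,j}$. For part (a), the scalar $1-4^{i+j}$ vanishes in $\ZLOC$ exactly when $i+j=0$, while $1+2\cdot4^{i+j}>0$ for every $i,j$; together with the constraint $i<j$ this forces $j=-i$ with $i\leq-1$, so $\ker h$ is the free $\ZLOC$-module on $\{a_{-i,i}:i\geq1\}$, as claimed.

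For part (b), diagonality immediately gives $\coker h=\bigoplus_{i<j}\ZLOC/(1-4^{i+j})\oplus\bigoplus_{i<j}\ZLOC/(1+2\cdot4^{i+j})$, and it remains only to identify each cyclic quotient. When $i+j=0$ the first quotient is free (the $\bigoplus_{n\in\N}\ZLOC$ summand); otherwise Lemma \ref{adic} gives $\nu_3(4^{i+j}-1)=\nu_3(i+j)+1$, so $\ZLOC/(1-4^{i+j})\cong\Z/(3^{\nu_3(i+j)+1})$. The step I expect to be the main obstacle is the $b_{i,j}$ quotient, where I must compute $\nu_3(1+2\cdot4^{i+j})$: here Lemma \ref{adic} does not apply directly. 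The clean route is the factorization $(2\cdot4^n+1)(2\cdot4^n-1)=4^{2n+1}-1$, whose right-hand side has valuation $\nu_3(2n+1)+1$ by Lemma \ref{adic}, while $2\cdot4^n-1\equiv1\pmod3$ is a unit in $\ZLOC$. Taking $n=i+j$ then yields $\nu_3(1+2\cdot4^{i+j})=\nu_3(2i+2j+1)+1$, hence $\ZLOC/(1+2\cdot4^{i+j})\cong\Z/(3^{\nu_3(2i+2j+1)+1})$, which completes the identification of $\coker h$.
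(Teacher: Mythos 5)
Your proof is correct and takes essentially the route the paper intends: the paper itself only cites \cite{Larson:ANSS} for this proposition, but it states explicitly (after Definition \ref{evectors}) that the $a_{i,j}$ and $b_{i,j}$ form an eigenbasis for $h$, and your computation of the eigenvalues $1-4^{i+j}$ and $1+2\cdot 4^{i+j}$ from $\psi_d(\sigma)=4\tau$, $\psi_d(\tau)=4\sigma$, $\psi_d(q_2)=-2q_2$ is exactly the diagonalization being invoked. Your handling of the one nonroutine valuation, via the factorization $(2\cdot 4^n+1)(2\cdot 4^n-1)=4^{2n+1}-1$ combined with Lemma \ref{adic} and the observation that $2\cdot 4^n-1$ is a unit in $\ZLOC$, is sound.
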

Corollary \ref{gcor} and Proposition \ref{hprop}
together imply Proposition \ref{gandh}.

\section{Computation of $\pmb{\delta^0}$ and $\pmb{\delta^1}$}
\label{connecting}
In this section we compute the kernel and cokernel
of the connecting maps $\delta^0$ and $\delta^1$.
Using these computations, we prove Theorem \ref{cohomology}.
\subsection{$\pmb{\delta^0:\ker g\to\ker h}$}
By Proposition \ref{filtration}, Corollary
\ref{gcor}, and Proposition \ref{hprop}(a), the source
of $\delta^0$ is
\[
\ker g =\pi_0TMF_{(3)}\oplus(\pi_*TMF_{(3)})_T
\]
and its target is
\[
\ker h =\Z_{(3)}\{a_{-i,i}:i\geq1\}
\subset B_0=\pi_0TMF_0(2)_{(3)}.
\]
For degree reasons, 
$(\pi_*TMF_{(3)})_T\subset\ker\delta^0$,
so it suffices to study the effect of $\delta^0$
on elements of $\pi_0TMF_{(3)}$.

The computation in (\cite{Larson:ANSS}, Eq.\ (24))
implies that for $v\geq0$,
\begin{equation}\label{delzerojmf}
\delta^0(C_v^0)
=2^{8v}
\sum_{i=1}^v\left(
\binom{3v}{2v+i}4^{-i}-\binom{3v}{2v-i}4^i
\right)a_{-i,i}
-2^{8v}\sum_{i=v+1}^{2v}\binom{3v}{2v-i}4^i a_{-i,i}.
\end{equation}  
Thus, with respect to the ordered bases
$\{C_0^0,3C_1^0,3C_2^0,
C_3^0,3C_4^0,3C_5^0,\ldots\}$
(see Corollary \ref{TMFZERO})
and 
$\{a_{-1,1}, a_{-2,2},\ldots\}$, 
\begin{equation}\label{deltazeromatrix}
\delta^0\bigg|_{\pi_0TMF_{(3)}}=\left[
\begin{array}{ccccc}
0 & * & * & * & \, \\
\vdots & 3u_1 & * & * & \,  \\
\, & 0 & * & * & \, \\
\, & \vdots & 3u_2 & * & \,  \\
\, & \, & 0 & * & \, \\
\, & \, & \vdots & u_3 & \,  \\
\, & \, & \, & 0 & \ddots 
\end{array}\right]
\end{equation}
where $u_k=-2^{12k}\in\ZLOC^{\times}$ for $k\geq1$
(cf. \cite{Larson:ANSS}, Eq.\ (25)).
The form of the matrix in Eq.\ \eqref{deltazeromatrix} 
shows that we may take
\[
\{a_{-1,1},\delta^0(C_1^0),a_{-3,3},
\delta^0(C_2^0),a_{-5,5},\delta^0(C_3^0),\ldots
\}
\]
as an alternative ordered basis for $\ker h$,
thereby proving the following proposition.
\begin{prop}\label{deltazeroresults}
$\ker\delta^0=\ZLOC\{1\}\oplus
(\pi_*TMF_{(3)})_T$, and
\[
\coker\delta^0=
\ZLOC\{a_{-i,i}:i\geq1\text{ odd}\}
\oplus\Z/(3)\{\delta^0(C_v^0):v\geq1, v
\text{ not a multiple of 3}\}.
\]
\end{prop}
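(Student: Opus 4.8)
The plan is to read off $\ker\delta^0$ and $\coker\delta^0$ directly from the column-echelon form of the matrix \eqref{deltazeromatrix}, after first discarding the torsion summand of $\ker g$. By Corollary \ref{gcor} we have $\ker g=\pi_0TMF_{(3)}\oplus(\pi_*TMF_{(3)})_T$, and the torsion summand sits in nonzero even $t$-degree whereas $\ker h\subset B_0$ is concentrated in $t$-degree $0$; since $\delta^0$ preserves internal degree, this forces $(\pi_*TMF_{(3)})_T\subset\ker\delta^0$, and these elements contribute nothing to the image. The whole computation therefore reduces to understanding $\delta^0|_{\pi_0TMF_{(3)}}$, whose matrix is \eqref{deltazeromatrix}.

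The one genuine calculation I would isolate is the identification of the pivot of each column. Extracting from \eqref{delzerojmf} the unique summand of top index $i=2v$, namely $-2^{8v}\binom{3v}{0}4^{2v}a_{-2v,2v}=-2^{12v}a_{-2v,2v}$, shows that $\delta^0(C_v^0)$ has its leading term on $a_{-2v,2v}$ with $3$-adic unit coefficient $u_v=-2^{12v}$. Thus the columns of \eqref{deltazeromatrix} have their pivots at the even-indexed basis vectors $a_{-2,2},a_{-4,4},a_{-6,6},\ldots$ of $\ker h$, in strictly increasing rows, confirming the staircase shape.

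Given this, the kernel is immediate: the first column $\delta^0(C_0^0)=\delta^0(1)$ vanishes, while the remaining columns have pivots in distinct rows with nonzero leading coefficients and are therefore $\ZLOC$-linearly independent. Hence $\ker(\delta^0|_{\pi_0TMF_{(3)}})=\ZLOC\{1\}$, and together with the torsion this yields $\ker\delta^0=\ZLOC\{1\}\oplus(\pi_*TMF_{(3)})_T$. For the cokernel I would use the unit pivots to perform a change of basis in $\ker h$, replacing each $a_{-2v,2v}$ by $\delta^0(C_v^0)$; this is legitimate over $\ZLOC$ precisely because $u_v$ is a unit, and it produces the alternative basis $\{a_{-1,1},\delta^0(C_1^0),a_{-3,3},\delta^0(C_2^0),\ldots\}$. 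In this basis the image of $\delta^0$ is spanned by the columns indexed by $v\geq1$, where by Corollary \ref{TMFZERO} the $\pi_0TMF_{(3)}$-generator in slot $v$ is $C_v^0$ when $3\mid v$ and $3C_v^0$ otherwise. Consequently $\delta^0(C_v^0)$ itself is hit when $3\mid v$, killing that direction, while only $3\delta^0(C_v^0)$ is hit when $3\nmid v$, leaving a $\Z/(3)$, and the odd-indexed $a_{-i,i}$ are never hit and survive as free summands --- which is exactly the claimed form of $\coker\delta^0$.

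The main obstacle is the bookkeeping separating the free $\ZLOC$ summand from the $\Z/(3)$ summands. One must carry the factor-of-$3$ normalization of the $\pi_0TMF_{(3)}$-basis from Corollary \ref{TMFZERO} through the change of basis and verify that the pivots $u_v$ are honest $3$-adic units, so that the substitution is valid integrally rather than merely rationally. Once those two points are in hand, the kernel and cokernel are read off from the echelon form in the standard way.
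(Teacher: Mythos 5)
Your proposal is correct and follows essentially the same route as the paper: a degree argument placing $(\pi_*TMF_{(3)})_T$ in $\ker\delta^0$, identification of the leading term $-2^{12v}a_{-2v,2v}$ of $\delta^0(C_v^0)$ from Eq.~\eqref{delzerojmf} to get the staircase matrix \eqref{deltazeromatrix} with unit pivots $u_v=-2^{12v}$, and the change of basis $\{a_{-1,1},\delta^0(C_1^0),a_{-3,3},\delta^0(C_2^0),\ldots\}$ for $\ker h$ combined with the $3$-normalization of the basis of $\pi_0TMF_{(3)}$ from Corollary~\ref{TMFZERO} to read off the kernel and cokernel. No gaps.
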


\subsection{$\pmb{\delta^1:\coker g\to\coker h}$}
\label{deltaonethree}
By Proposition \ref{filtration}, Corollary
\ref{gcor}, and Proposition \ref{hprop}(b), the source
of $\delta^1$ is
\[
\coker g=\pi_*TMF_{(3)}/\sim_{\coker g}
\]
and its target is
\[
\coker h=\bigoplus_{i<j\in\Z}\left(
\Z/3^{\nu_3(i+j)+1}\{a_{i,j}\}\oplus
\Z/3^{\nu_3(2i+2j+1)+1}\{b_{i,j}\}
\right).
\]
\begin{lem}\label{deltaoneintro}
\begin{enumerate}[(a)]
\item $(\pi_*TMF_{(3)})_T\subset\ker\delta^1$
\item $\ker
\delta^1\bigg|_{\pi_0TMF_{(3)}}=
\ker\delta^0\bigg|_{\pi_0TMF_{(3)}}$, and similarly
for the cokernel.
\end{enumerate}
\end{lem}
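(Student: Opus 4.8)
The plan is to handle the two parts using the explicit descriptions of the connecting maps recorded after Proposition \ref{filtration}, namely that $\delta^0$ is the restriction of $\phi_q-\phi_f$ to $\ker g$ and $\delta^1$ is induced by $-\phi_f$. For part (a) I would show that $\phi_f$ annihilates all of $(\pi_*TMF_{(3)})_T$. On homotopy $\phi_f$ is a map of Adams--Novikov spectral sequences from $\pi_*TMF_{(3)}$ to $\pi_*TMF_0(2)_{(3)}=B$, so it preserves Adams--Novikov filtration. Every torsion class of $\pi_*TMF_{(3)}$ lies in positive filtration (the $0$-line $\Ext^{0,*}_\Gamma(B,B)$ is torsion-free), while by Lemma \ref{ANSSFORTMF}(b) the target is concentrated in filtration $0$; hence $\phi_f$ kills $(\pi_*TMF_{(3)})_T$. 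Equivalently, on cobar representatives $\phi_f$ sends the augmentation ideal $(r)$ to $0$, and every torsion class is represented by a cocycle in $(r)$. Since $(\pi_*TMF_{(3)})_T$ injects into $\coker g$ (Remark \ref{therelation}), it follows that $\delta^1=-\phi_f$ vanishes on it, giving $(\pi_*TMF_{(3)})_T\subset\ker\delta^1$.

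For part (b) I would first simplify the maps on the subring $B\subset\Gamma$: the formulas of Definition \ref{themaps} give $\phi_f|_B=1$ and $\phi_q|_B=\psi_d$. Since $\pi_0TMF_{(3)}=\Ext^{0,0}_\Gamma(B,B)\subset B_0$, on $\pi_0TMF_{(3)}$ we get $\delta^0=\psi_d-1$ (landing in $\ker h$) and $\delta^1=-1\pmod{\operatorname{im}h}$ (landing in $\coker h$). The crux is to compare these two maps, whose targets $\ker h$ and $\coker h$ are a priori different. I would relate them through the action of $\psi_d$ on $B_0$: from $\psi_d(\sigma)=4\tau$ and $\psi_d(\tau)=4\sigma$ one finds $\psi_d(\sigma^p\tau^{-p})=\sigma^{-p}\tau^p$ on degree-$0$ monomials, so $\psi_d$ restricts to an involution of $B_0$. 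Consequently $h=\psi_d+1$ satisfies $h^2=2h$ on $B_0$, and $B_0=\ker h\oplus\operatorname{im}h$ with $\ker h$ the $(-1)$-eigenspace $\ZLOC\{a_{-i,i}:i\geq1\}$ (Proposition \ref{hprop}(a)) and $\operatorname{im}h$ the $(+1)$-eigenspace. In particular the composite $\iota\colon\ker h\hookrightarrow B_0\twoheadrightarrow\coker h$ is an isomorphism in degree $0$.

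With this in hand the comparison is immediate. Using $\psi_d=h-1$, for $x\in\pi_0TMF_{(3)}$ I compute $\delta^0(x)=(\psi_d-1)(x)=h(x)-2x$, which indeed lies in $\ker h$ because $h^2=2h$. Applying $\iota$ and reducing modulo $\operatorname{im}h$ gives $\iota(\delta^0(x))=\overline{h(x)-2x}=-2\,\overline{x}=2\,\delta^1(x)$ in $\coker h$. Thus $\iota\circ(\delta^0|_{\pi_0TMF_{(3)}})=2\,(\delta^1|_{\pi_0TMF_{(3)}})$, and since $2\in\ZLOC^\times$ and $\iota$ is an isomorphism, the two restrictions have the same kernel and the same image. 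This yields both the kernel and the cokernel assertions of (b).

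The step I expect to be the main obstacle is making the comparison in (b) well posed rather than the algebra itself: $\delta^0$ and $\delta^1$ land in the genuinely different modules $\ker h$ and $\coker h$, so the statement only acquires meaning after establishing the degree-$0$ identification $\iota$. Pinning down that $\operatorname{im}h|_{B_0}$ is exactly the $(+1)$-eigenspace of $\psi_d$---so that the symmetric part of each weight-$0$ form, and in particular the class $1$, survives in $\ker\delta^0$ but dies in $\coker h$---is the delicate point, and is what forces $\delta^0$ and $\delta^1$ to agree only up to the unit $2$. Once the involution structure of $\psi_d$ on $B_0$ is in place, everything else is bookkeeping.
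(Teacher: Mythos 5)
Your argument is correct, and it splits naturally into a part that matches the paper and a part that does not. For (a) you and the paper use the same underlying fact --- $\delta^1$ is induced by $-\phi_f$, which kills the ideal $(r)$ in the cobar resolution, while every torsion class is represented there --- though your reformulation via Adams--Novikov filtration (torsion lives in positive filtration, and $\pi_*TMF_0(2)_{(3)}$ has nothing in positive filtration) treats all torsion classes uniformly rather than naming $\alpha$ and $\beta$ and deferring the rest to the cobar pairing. For (b) you genuinely diverge: the paper simply imports the identity $\delta^1(C_v^0)=\tfrac12\delta^0(C_v^0)$ from the explicit computation in (\cite{Larson:ANSS}, Eq.~26), whereas you derive it a priori from Definition \ref{themaps} by observing that $\psi_d$ interchanges $\sigma$ and $\tau$ up to the factor $4$, hence restricts to an involution of $B_0$, so that $h=\psi_d+1$ satisfies $h^2=2h$ and $B_0=\ker h\oplus\operatorname{im}h$. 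This buys two things the paper leaves implicit: it makes the comparison of $\delta^0$ (valued in $\ker h$) with $\delta^1$ (valued in $\coker h$) well posed by exhibiting the degree-zero isomorphism $\iota\colon\ker h\to(\coker h)_0$, and it replaces an external citation by a short self-contained eigenspace computation giving $\iota\circ\delta^0=2\,\delta^1$ on $\pi_0TMF_{(3)}$. The paper's route has the advantage of feeding the same citation chain used for the rest of Section \ref{connecting}; for this particular lemma your derivation is cleaner and more transparent about why the factor is exactly the unit $2$.
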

\begin{proof}
Part (a) follows from the facts that $\delta^1$
is induced by $-\phi_f$, $\phi_f:r\mapsto0$ (see
Definition \ref{themaps}), and
$\alpha$ (resp.\ $\beta$) is represented by $r$
(resp.\ $r^2\otimes r-r\otimes r^2$) in the cobar
resolution (see Remark \ref{cobar}).

The computation in (\cite{Larson:ANSS}, Eq.\ 26) implies
that for $v\geq0$,
\begin{equation}\label{delonejmf}
\delta^1(C_v^0)
=\frac12\delta^0(C_v^0)
\end{equation}
which proves (b).
\end{proof}
By Lemma \ref{zerolinedecomp}, it remains
to study $\delta^1\bigg|_{W^{\epsilon,m}}$
for $(\epsilon,m)\neq(0,0)$, and we shall
do so for the remainder of this subsection.
First, however, we establish some convenient
notational conventions.
\begin{defn}\label{paradigm}
\begin{enumerate}[(a)]
\item
If $x=\gamma_v C_v^m
\in\BMF^{0,m}$
(resp.\ $y=\theta_v D_v^m\in\BMF^{1,m}$),
the $A_w^m$ term of
$\delta^1(x)$
(resp.\ the $B_w^m$ term of 
$\delta^1(y)$)
with the {\em greatest} subscript $w$
will be denoted the {\em leading term}, 
and we will refer to the remaining
terms as {\em lower order terms}
(see Lemma \ref{deltaonesetup}).
\item The symbol $\doteq$ will denote equality
up to multiplication by a unit in $\ZLOC$.
\item If $M$ is a matrix with columns
$M_1,\ldots,M_v$ and $N$ is a matrix
with (possibly infinitely many) columns
$N_1,N_2,\ldots$, then $M\boxplus N$
will denote the matrix with columns
\[
M_1,\ldots,M_v,N_1,N_2,\ldots.
\]
\end{enumerate}
\end{defn} 
The following lemma sets up
our subsequent computations with $\delta^1$.
\begin{lem}\label{deltaonesetup}
\begin{enumerate}[(a)]
\item The source of $\delta^1\bigg|_{W^{0,m}}$
has basis $\mathcal B_{TMF}^{0,m}=
\{\gamma_0C_0^m,\gamma_1C_1^m,\ldots\}$
and its target has basis
$\{A_0^m,A_1^m,\ldots\}\subset\coker h$.
Moreover, if $m<0$,
\begin{equation}\label{deltaonefirst}
\delta^1(C^m_v)\doteq A^m_{\left\lfloor\frac{m-1}2
\right\rfloor-2\LMAX+2v}+\HOT
\end{equation}
and if $m>0$,
\begin{equation}\label{deltaonesecond}
\delta^1(C^m_v)\doteq
\begin{cases}
A^m_{\left\lfloor\frac{m-1}2\right\rfloor-\LMAX+v}
+\HOT, & 0\leq v<\LMAX\\
0, & v=\LMAX\\
A^m_{\left\lfloor\frac{m-1}2\right\rfloor-2\LMAX+2v}
+\HOT, & v>\LMAX.
\end{cases}
\end{equation}
\item The source of $\delta^1\bigg|_{W^{1,m}}$
has basis $\mathcal B_{TMF}^{1,m}=\{
\theta_0D_0^m,\theta_1D_1^m,\ldots
\}$ and its target has basis
$\{
B_0^m,B_1^m,\ldots
\}\subset\coker h$.  Moreover, if $m\leq0$,
\begin{equation}\label{deltaonethird}
\delta^1(D^{m}_v)\doteq B^{m}_{\left\lfloor\frac{m-1}2
\right\rfloor-2\LMAXB+2v}+\HOT
\end{equation}
and if $m>0$,
\begin{equation}\label{deltaonefourth}
\delta^1(D^{m}_v)\doteq
\begin{cases}
B^{m}_{\left\lfloor\frac{m-1}2
\right\rfloor-\LMAXB+v}+\HOT, & 0\leq v<\LMAXB \\
*, & v=\LMAXB \\
B^{m}_{\left\lfloor\frac{m-1}2
\right\rfloor-2\LMAX+2v}+\HOT, & v>\LMAXB
\end{cases}
\end{equation}
where $*=0$ if $m\nequiv13\, (27)$ and
$*\neq0$ otherwise.
\end{enumerate}
\end{lem}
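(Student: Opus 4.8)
The plan is to exploit the identification of $\delta^1$ with $-\phi_f$ recorded just after Definition~\ref{defgh}. Since $\phi_f$ fixes $q_2$ and $q_4$ (Definition~\ref{themaps}), its restriction to the invariant subring $\Ext_\Gamma^{0,*}(B,B)\subset B$ is the identity; hence for a class represented by an invariant $x$ we have $\delta^1([x])=-[x]\in\coker h$. In other words $\delta^1$ is simply reduction of the invariant representative modulo $\operatorname{im}h$. Using the representatives of Lemma~\ref{basisTMF}(b), the task becomes: expand $\phi_f(C_v^m)=c_4^{\,n}\Delta^{\ell}$ (resp.\ $\phi_f(D_v^m)=c_4^{\,n'}c_6\Delta^{\ell'}$) in the monomial basis $\{\sigma^i\tau^j\}$ of $B$ and read off its image in $\coker h$.

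The reduction rule comes from diagonalizing $h=\psi_d+1$. A direct computation from Definition~\ref{themaps} gives $\psi_d(\sigma)=4\tau$, $\psi_d(\tau)=4\sigma$ and $\psi_d(q_2)=-2q_2$, so $\psi_d$ interchanges $\sigma$ and $\tau$ up to the scalar $4$. Consequently the antisymmetric vectors $a_{i,j}=\sigma^i\tau^j-\sigma^j\tau^i$ and $b_{i,j}=q_2 a_{i,j}$ are eigenvectors of $h$ with eigenvalues $1-4^{i+j}$ and $1+2\cdot 4^{i+j}$, while the symmetric vectors $\sigma^i\tau^j+\sigma^j\tau^i$ and $q_2(\sigma^i\tau^j+\sigma^j\tau^i)$ have eigenvalues $1+4^{i+j}$ and $1-2\cdot 4^{i+j}$. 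By Lemma~\ref{adic} the first two valuations are $\nu_3(i+j)+1$ and $\nu_3(2i+2j+1)+1$ (matching Proposition~\ref{hprop}(b)), whereas the two symmetric eigenvalues are $3$-adic units. Thus modulo $\operatorname{im}h$ every symmetric monomial dies and $\sigma^i\tau^j\doteq a_{i,j}$, $q_2\sigma^i\tau^j\doteq b_{i,j}$. Since $\Delta\doteq\sigma^2\tau$, and $c_4$ (resp.\ $c_6$) is linear in $\sigma,\tau$ (resp.\ $q_2$ times a linear form in $\sigma,\tau$), the internal-degree bookkeeping forces every monomial in $\phi_f(C_v^m)$ (resp.\ $\phi_f(D_v^m)$) to satisfy $i+j=m$, so its class lies in the span of $\{A_v^m\}$ (resp.\ $\{B_v^m\}$), as asserted.

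To extract the leading term I expand $\phi_f(C_v^m)=c_4^{\,n}\Delta^{\ell}$ by the binomial theorem into monomials $\sigma^{2\ell+p}\tau^{\ell+n-p}$, $0\le p\le n$. The reflection of the extreme monomial — at $p=0$ when $\ell<0$, at $p=n$ when $\ell>0$ — corresponds to an out-of-range value of $p$, so that extreme monomial is unpaired and reduces, with a power-of-two (hence unit) coefficient, to a single $a_{i,j}$. A short analysis of $\min(2\ell+p,\ell+n-p)$ identifies it as the $A_w^m$ of largest $w$: the smaller index is $2\ell$ when $\ell<0$ and $\ell$ when $\ell>0$, which through $\ell=\LMAX-v$ yields exactly the subscripts in \eqref{deltaonefirst}--\eqref{deltaonesecond}, the three cases of the latter being precisely $\ell>0$, $\ell=0$, $\ell<0$. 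In the middle case $v=\LMAX$ one has $\phi_f(C_v^m)=c_4^{\,m}$ with no extreme unpaired monomial; each antisymmetrized coefficient is $\binom{m}{i}(A^iB^{m-i}-A^{m-i}B^i)$, and via Lemma~\ref{adic} together with a Kummer-type estimate on $\binom{m}{i}$ one checks its valuation is at least $\nu_3(m)+1$, the order of $a_{i,m-i}$, giving $\delta^1(C_{\LMAX}^m)=0$ unconditionally. Part (b) runs identically, the factor $c_6$ pushing everything into the $b_{i,j}$-summands and replacing $\LMAX$ by $\LMAXB$ throughout.

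The one genuinely delicate point is the value of $*$ at $v=\LMAXB$ in \eqref{deltaonefourth}, where $\ell'=0$ and $\phi_f(D_v^m)=c_4^{\,m-1}c_6$. Here every target summand $\Z/3^{\nu_3(2i+2j+1)+1}\{b_{i,j}\}$ with $i+j=m$ has the \emph{same} order $3^{\nu_3(2m+1)+1}$, so the class is nonzero precisely when some antisymmetrized coefficient of $c_4^{\,m-1}c_6$ fails to be divisible by $3^{\nu_3(2m+1)+1}$. These coefficients are fixed integer combinations of binomial coefficients and the two structure constants of $c_6$, so the question is a pure $3$-adic valuation comparison whose threshold now tracks $2m+1$ rather than $m$; the relevant Kummer-type inequality holds exactly when $\nu_3(2m+1)+1\le 3$, and the boundary $\nu_3(2m+1)\ge 3$ is reached precisely when $2m+1\equiv 0\ (27)$, i.e.\ $m\equiv 13\ (27)$. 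I expect this valuation analysis — proving divisibility of the antisymmetrized coefficients for $m\nequiv 13\ (27)$ and its failure for $m\equiv 13\ (27)$ — to be the main obstacle, and to rest on the explicit binomial identities and $3$-adic estimates of \cite{Larson:ANSS}.
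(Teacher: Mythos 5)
Your proposal is correct in substance, but it is a genuinely more self\nobreakdash-contained route than the paper's, whose entire proof of this lemma is a pointer to the proof of (\cite{Larson:ANSS}, Proposition 9) \emph{mutatis mutandis}. You reconstruct that computation from first principles: $\psi_d$ interchanges $\sigma$ and $\tau$ up to the scalar $4$, so $B$ splits into $h$-eigenlines spanned by the antisymmetric vectors $a_{i,j},b_{i,j}$ (eigenvalues $1-4^{i+j}$ and $1+2\cdot4^{i+j}$, whose valuations recover Proposition \ref{hprop}(b) via Lemma \ref{adic}) and their symmetric counterparts, which have unit eigenvalues and hence die in $\coker h$; then, since $\delta^1$ is induced by $-\phi_f$ and $\phi_f$ is the identity on invariants, one expands $c_4\doteq 2\sigma+8\tau$, $\Delta\doteq\sigma^2\tau$, $c_6\doteq q_2(4\sigma-32\tau)$ and reads off the image in $\coker h$. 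Your key observation --- that the extreme monomial ($p=0$ when $\ell<0$, $p=n$ when $\ell>0$) is unpaired under the $\sigma\leftrightarrow\tau$ reflection and so contributes a unit multiple of a single $a_{i,j}$ --- does produce exactly the leading subscripts in \eqref{deltaonefirst}--\eqref{deltaonefourth}; I checked the index arithmetic in all three regimes, as well as the vanishing at $v=\LMAX$, where the needed inequality $\nu_3\bigl(\binom{m}{p}\bigr)+\nu_3(m-2p)\geq\nu_3(m)$ follows from $p\binom{m}{p}=m\binom{m-1}{p-1}$. Your reading of the last case of \eqref{deltaonefourth} with $\LMAXB$ in place of $\LMAX$ is also the intended one. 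What this buys is a proof of the lemma that can be verified without consulting \cite{Larson:ANSS}.

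The one claim you do not actually establish is the dichotomy for $*=\delta^1(D^m_{\LMAXB})$: the assertion that the ``relevant Kummer-type inequality holds exactly when $\nu_3(2m+1)+1\leq3$'' is a heuristic, and the nonvanishing for $m\equiv13\ (27)$ in particular requires exhibiting an antisymmetrized coefficient of $c_4^{m-1}c_6$ of valuation at most $\nu_3(2m+1)$. Since the paper outsources precisely this point to (\cite{Larson:ANSS}, Lemma 12(a)), this is not a gap relative to the paper's own standard, but in your write-up this case should be flagged as cited rather than derived.
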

\begin{proof}
Equations \eqref{deltaonefirst},
\eqref{deltaonesecond}, \eqref{deltaonethird},
and \eqref{deltaonefourth} follow from
the proof of (\cite{Larson:ANSS}, Proposition 9)
and the results cited therein,
{\em mutatis mutandis}. Lemma \ref{bdirectsum},
Proposition
\ref{TMFBASES}, and 
Remark \ref{degreecounting} imply the remaining
statements.
\end{proof}
Guided by Lemma \ref{deltaonesetup}, we shall now
divide our remaining computations of $\delta^1$ into cases
depending on the values of $\epsilon$ and $m$.
In each case we will find matrix representations
of $\delta^1$, just as we did
with $\delta^0$ in Eq.\ \eqref{deltazeromatrix}.
\begin{rmk}
In Eq.\ \eqref{deltaonefourth} when $m\equiv13\, (27)$,
the quantity $*=\delta^1(D^m_{\LMAXB})$ is a 
{\em nontrivial} linear combination of the generators $B_v^m$,
which is precisely the reason for the undetermined
submodule $U^*\subset\,_{BK}E_2^{1,*}\QTWO$ in
Theorem \ref{main}.  This is explained further below
in Case 5.  An analogous phenomenon occured in
our computation in \cite{Larson:ANSS}: see, e.g., 
(\cite{Larson:ANSS}, Remark 2).
\end{rmk}

\begin{proof}[Case 1: $\epsilon=0$, $m<0$.]\let\qed\relax
By Eq.\ \eqref{deltaonefirst}, the matrix representation
with respect to the bases given in Lemma \ref{deltaonesetup}(a)
is
\begin{equation}\label{MATRIX1}
\delta^1\bigg|_{W^{0,m}}=
\left[
\begin{array}{cccc}
\vdots & \vdots & \, & \,  \\
\gamma_0u_0 & * & \, & \,  \\
0 & * & \vdots & \,  \\
\vdots & \gamma_1u_1 & * & \,  \\
\, & 0 & * & \,  \\
\, & \vdots & \gamma_2u_2 & \,  \\
\, & \, & 0 & \ddots  
\end{array}\right]
\end{equation}
where $u_0,u_1,u_2,\ldots\in\ZLOC^{\times}$
and $u_0$ is in the row corresponding to
$A^m_{\floor{\frac{m-1}2}-2\LMAX}$.  Motivated by the 
structure of this matrix,
we may construct an alternative 
ordered basis for the image, namely
\begin{align*}
\{A_0^m,\ldots,A_{\floor{\frac{m-1}2}-2\LMAX-1}^m,
&\delta^1(C_0^m),A_{\floor{\frac{m-1}2}-2\LMAX+1}^m,\\
&\delta^1(C_1^m),A_{\floor{\frac{m-1}2}-2\LMAX+3}^m,\\
&\delta^1(C_2^m),A_{\floor{\frac{m-1}2}-2\LMAX+5}^m,\ldots
\}.
\end{align*}
From this, we may deduce the kernel and cokernel.
\begin{lem}\label{LEMMAONE}
For $m<0$,
\[
\ker\delta^1\bigg|_{W^{0,m}}=
\begin{cases}
\Z/(3)\{3C_1^m,3C_2^m,3C_4^m,3C_5^m,3C_7^m,3C_8^m,\ldots\}, & 
m\equiv0,1,2\ (9),\\
\Z/(3)\{3C_0^m;3C_2^m,3C_3^m,3C_5^m,3C_6^m,3C_8^m,3C_9^m,\ldots\}, & 
m\equiv3,4,5\ (9),\\
\Z/(3)\{3C_0^m,3C_1^m,3C_3^m,3C_4^m,3C_6^m,3C_7^m,\ldots\}, &
\text{otherwise}
\end{cases}
\]
and 
\begin{align*}
\coker\delta^1\bigg|_{W^{0,m}}=
\Z/(3)\{\mathcal A\}&\oplus
\Z/(3^{\nu_3(m)+1})
\left\{
A^m_0,\ldots,A^m_{\left\lfloor\frac{m-1}2
\right\rfloor-2\LMAX-1}\right\}\\
&\oplus\Z/(3^{\nu_3(m)+1})\left\{
A^m_{\left\lfloor\frac{m-1}2
\right\rfloor-2\LMAX+i}:i\geq1,\rm{\, odd}
\right\}
\end{align*}
where 
\[
\mathcal A:=\begin{cases}
\{\delta^1(C_1^m),\delta^1(C_2^m),\delta^1(C_4^m),
\delta^1(C_5^m),\delta^1(C_7^m),
\delta^1(C_8^m),\ldots\},&m\equiv0,1,2\ (9),\\
\{\delta^1(C_0^m);\delta^1(C_2^m),\delta^1(C_3^m),
\delta^1(C_5^m),\delta^1(C_6^m),
\delta^1(C_8^m),\delta^1(C_9^m),\ldots\},
&m\equiv3,4,5\ (9),\\
\{\delta^1(C_0^m),\delta^1(C_1^m),\delta^1(C_3^m),
\delta^1(C_4^m),\delta^1(C_6^m),
\delta^1(C_7^m),\ldots\},&\text{otherwise}.
\end{cases}
\]
\end{lem}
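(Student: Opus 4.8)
The plan is to turn the staircase matrix \eqref{MATRIX1} into an honest linear-algebra computation over the torsion ring $\Z/(3^{\nu_3(m)+1})$. First I would pin down the source and target as explicit sums of cyclic modules. Since $\deg_t(\gamma_v C_v^m)=4m$ by Remark \ref{degreecounting} and $\nu_3(4m)=\nu_3(m)$, the relation $\sim_{\coker g}$ forces $3^{\nu_3(m)+1}\cdot\gamma_v C_v^m=0$, so the source of $\delta^1\big|_{W^{0,m}}$ inside $\coker g$ is $\bigoplus_{v\geq0}\Z/(3^{\nu_3(m)+1})\{\gamma_v C_v^m\}$. On the target side, Definition \ref{defnAB} gives $A_v^m=a_{i,j}$ with $i+j=m$ independently of $v$, so by Proposition \ref{hprop}(b) each $A_w^m$ generates a copy of $\Z/(3^{\nu_3(m)+1})$ in $\coker h$; by Lemma \ref{deltaonesetup}(a) the image lands in $\bigoplus_{w\geq0}\Z/(3^{\nu_3(m)+1})\{A_w^m\}$, which is therefore the relevant target.

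Next I would justify the change of target basis indicated after \eqref{MATRIX1}. By Eq.\ \eqref{deltaonefirst} we have $\delta^1(C_v^m)=u_v A^m_{r_0+2v}+\HOT$ with $u_v\in\ZLOC^{\times}$, where I set $r_0:=\floor{\frac{m-1}2}-2\LMAX$, and by Definition \ref{paradigm}(a) the lower order terms involve only $A_w^m$ with $w<r_0+2v$. Ordering the classes by increasing $v$, the submatrix recording the $A^m_{r_0+2\bullet}$-coefficients of $\{\delta^1(C_v^m)\}$ is unitriangular (diagonal $u_v$, off-diagonal entries only from lower order terms landing on earlier replaced rows), hence invertible; because each $\delta^1(C_v^m)$ is a finite combination and the triangular system back-solves, replacing $\{A^m_{r_0+2v}\}_{v\geq0}$ by $\{\delta^1(C_v^m)\}_{v\geq0}$ yields a genuine $\Z/(3^{\nu_3(m)+1})$-basis of the target, the retained basis vectors being $A_0^m,\dots,A^m_{r_0-1}$ together with the odd-indexed $A^m_{r_0+2k+1}$.

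In this basis the image is the span of the columns $\gamma_v\,\delta^1(C_v^m)$, so kernel and cokernel split as direct sums indexed by $v$ (the image part) and by the retained $A_w^m$ (the rest). For each $v$ I read off the contribution by cases on $\gamma_v\in\{1,3\}$ from Proposition \ref{TMFBASES}: when $\gamma_v=1$ the map $\Z/(3^{\nu_3(m)+1})\{\gamma_v C_v^m\}\to\Z/(3^{\nu_3(m)+1})\{\delta^1(C_v^m)\}$ is an isomorphism and contributes nothing; when $\gamma_v=3$ it is multiplication by $3$, contributing a $\Z/(3)$ to the kernel (generated by the appropriate $3$-power multiple of $\gamma_v C_v^m$, which the statement abbreviates as $3C_v^m$) and a $\Z/(3)\{\delta^1(C_v^m)\}$ to the cokernel. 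The retained generators $A_0^m,\dots,A^m_{r_0-1}$ and the odd-indexed $A^m_{r_0+2k+1}$ pass untouched to full $\Z/(3^{\nu_3(m)+1})$ summands of the cokernel. Finally I would feed the three residue patterns of $\{\gamma_v\}$ modulo $9$ from Proposition \ref{TMFBASES} into the condition $\gamma_v=3$; these are exactly the three cases in the statement, which identifies the index sets of the kernel generators and the set $\mathcal A$ spanning $\Z/(3)\{\mathcal A\}$.

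The main obstacle I anticipate is bookkeeping rather than conceptual, concentrated in two places. First, one must verify that after imposing $\sim_{\coker g}$ the source is honestly $3^{\nu_3(m)+1}$-torsion, so that on the $\gamma_v=3$ columns $\delta^1$ really is a multiplication-by-$3$ map between cyclic groups of the \emph{same} order $3^{\nu_3(m)+1}$; this is what produces a $\Z/(3)$ in both kernel and cokernel rather than some larger group. Second, one must check that the infinite triangular transition—column-finite and back-solvable—defines a genuine basis and not merely a spanning set, so that the cokernel really splits off the asserted full $\Z/(3^{\nu_3(m)+1})$ summands on the retained rows. Once these two points are secured, the case analysis on $\gamma_v$ is routine.
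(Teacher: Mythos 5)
Your proposal is correct and follows essentially the same route as the paper: read off the staircase matrix \eqref{MATRIX1} from Eq.\ \eqref{deltaonefirst}, replace each leading-term generator $A^m_{\lfloor\frac{m-1}2\rfloor-2\ell^{0,m}+2v}$ by $\delta^1(C_v^m)$ via the (column-finite, unit-diagonal triangular) change of target basis, and then sort the cyclic summands according to whether $\gamma_v$ is $1$ or $3$ using Proposition \ref{TMFBASES}. The extra care you take with the torsion order $3^{\nu_3(m)+1}$ on both sides and with the legitimacy of the infinite basis change is exactly the detail the paper leaves implicit.
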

\end{proof}
\begin{proof}[Case 2: $\epsilon=0$, $m>0$.]\let\qed\relax
By Eq.\ \eqref{deltaonesecond}, the matrix
representation in this case is
\begin{equation}\label{MATRIX2}
\delta^1\bigg|_{W^{0,m}}=
\left[
\begin{array}{ccccc}
\vdots & \vdots & \, & \, & \,\\
\gamma_0u_0 & * & \, & \, & \,\\
0 & \gamma_1u_1 & \, & \vdots & \,\\
\vdots & 0 & \ddots & * & {\pmb\vdots}\\
\, & \vdots & \, & \gamma_yu_y & {\pmb 0}\\
\, & \, & \, & 0 & {\pmb 0}\\
\, & \, & \, & \vdots & {\pmb\vdots}\\
\end{array}
\right]\boxplus
\left[
\begin{array}{cccc}
\vdots & \vdots & \,& \,\\
\gamma_{y+2}u_{y+2} & * & \, & \,\\
0 & * & \vdots & \,\\
\vdots & \gamma_{y+3}u_{y+3} & *& \,\\
\, & 0 & * & \, \\
\, & \vdots & \gamma_{y+4}u_{y+4} & \, \\
\, & \, & 0 & \ddots
\end{array}
\right]
\end{equation}
where 
the $u_i$ are units in $\ZLOC$.
Here, $u_0$ is in the row
corresponding to $A^m_{\left\lfloor\frac{m-1}2\right\rfloor
-\LMAX}$, $u_y$
is in the row corresponding to $A^m_{\left\lfloor\frac{m-1}2\right\rfloor
-1}$, $u_{y+2}$ is in the row corresponding to
$A^m_{\left\lfloor\frac{m-1}2\right\rfloor+2}$,
and the zero column in bold corresponds to 
$C^m_{\LMAX}$. As in the previous case, we may 
deduce the kernel and cokernel by 
constructing
an alternative basis for the image; this time it is
\begin{align*}
\{A_0^m,\ldots,A^m_{\floor{\frac{m-1}2}-\LMAX-1};
\delta^1(C_0^m),\ldots,\delta^1(C^m_{\LMAX-1});
A_{\floor{\frac{m-1}2}}; 
& A_{\floor{\frac{m-1}2}+1},\delta^1(C^m_{\LMAX+1}),\\
& A_{\floor{\frac{m-1}2}+3},\delta^1(C^m_{\LMAX+2}),\\
& A_{\floor{\frac{m-1}2}+5},\delta^1(C^m_{\LMAX+3}),\ldots
\}.
\end{align*}
\begin{lem}\label{LEMMATWO}
For $m<0$,
$\ker\delta^1\bigg|_{W^{0,m}}=
\Z/(3^{\nu_3(m)+1})\{C^m_{\LMAX}\}\oplus K$ where
\[
K:=
\begin{cases}
\Z/(3)\{3C_1^m,3C_2^m,3C_4^m,3C_5^m,3C_7^m,3C_8^m,\ldots\}, & 
m\equiv0,1,2\ (9),\\
\Z/(3)\{3C_0^m;3C_2^m,3C_3^m,3C_5^m,3C_6^m,3C_8^m,3C_9^m,\ldots\}, & 
m\equiv3,4,5\ (9),\\
\Z/(3)\{3C_0^m,3C_1^m,3C_3^m,3C_4^m,3C_6^m,3C_7^m,\ldots\}, &
\text{otherwise}
\end{cases}
\]
and
\begin{align*}
\coker\delta^1\bigg|_{W^{0,m}}=
\Z/(3)\{\mathcal A\}&\oplus
\Z/(3^{\nu_3(m)+1})
\left\{
A^m_0,\ldots,A^m_{\left\lfloor\frac{m-1}2
\right\rfloor-\LMAX-1};A^m_{\floor{\frac{m-1}2}}\right\}\\
&\oplus\Z/(3^{\nu_3(m)+1})\left\{
A^m_{\left\lfloor\frac{m-1}2
\right\rfloor+i}:i\geq1,\rm{\, odd}
\right\}
\end{align*}
where $\mathcal A$ is defined as in Lemma 
\ref{LEMMAONE}.
\end{lem}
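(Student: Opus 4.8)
The plan is to read off $\ker$ and $\coker$ of $\delta^1\big|_{W^{0,m}}$ from the matrix in Eq.~\eqref{MATRIX2} and the alternative ordered basis of the target displayed immediately above it, following the staircase argument of Case~1. The source is $\bigoplus_v\Z/(3^{\nu_3(m)+1})\{\gamma_vC^m_v\}$ and, since each generator $A^m_w=a_{i,j}$ has $i+j=m$ (Definition~\ref{defnAB}), Proposition~\ref{hprop}(b) identifies the target as $\bigoplus_w\Z/(3^{\nu_3(m)+1})\{A^m_w\}$. The one new feature relative to Case~1 is the bold zero column for $C^m_{\LMAX}$, which splits the staircase into the two $\boxplus$-blocks: a first block with leading units in the consecutive rows $A^m_{\floor{\frac{m-1}2}-\LMAX},\dots,A^m_{\floor{\frac{m-1}2}-1}$, and a second block with leading units in the even-offset rows $A^m_{\floor{\frac{m-1}2}+2},A^m_{\floor{\frac{m-1}2}+4},\dots$. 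Because each $\delta^1(C^m_v)$ has a unit leading coefficient by Eq.~\eqref{deltaonesecond}, the transition from $\{A^m_w\}$ to the displayed basis is triangular with unit diagonal, hence an automorphism of the target; in the new basis the image is generated by $\gamma_v\,\delta^1(C^m_v)$, with $\gamma_v\in\{1,3\}$ read off from Proposition~\ref{TMFBASES}.

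For the cokernel I would collect the generators $A^m_w$ at the non-leading positions, each surviving as a full copy of $\Z/(3^{\nu_3(m)+1})$: the rows $A^m_0,\dots,A^m_{\floor{\frac{m-1}2}-\LMAX-1}$ below the first block, the single row $A^m_{\floor{\frac{m-1}2}}$ skipped by the zero column, and the odd-offset rows $A^m_{\floor{\frac{m-1}2}+i}$ ($i\geq1$ odd) left by the second block. At each leading position the relation $\gamma_v\,\delta^1(C^m_v)=0$ annihilates the summand when $\gamma_v=1$ and leaves a copy of $\Z/(3)$ when $\gamma_v=3$; indexing the latter by Proposition~\ref{TMFBASES} produces the set $\mathcal A$ and the asserted formula for $\coker\delta^1\big|_{W^{0,m}}$.

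For the kernel I would solve $\sum_v c_v\gamma_v\,\delta^1(C^m_v)=0$; since the $\delta^1(C^m_v)$ with $v\neq\LMAX$ are independent generators of the target, this forces $c_v\gamma_v\equiv0\pmod{3^{\nu_3(m)+1}}$ for $v\neq\LMAX$ and leaves $c_{\LMAX}$ free, the column of $C^m_{\LMAX}$ being zero. The free coordinate contributes $\Z/(3^{\nu_3(m)+1})\{C^m_{\LMAX}\}$; the key observation is that the divisibility condition $\LMAX-v\equiv0\,(3)$ defining $\gamma_v=1$ is satisfied at $v=\LMAX$, so $\gamma_{\LMAX}=1$ by Proposition~\ref{TMFBASES} and $C^m_{\LMAX}$ is a genuine basis element of $W^{0,m}$ of full order $3^{\nu_3(m)+1}$. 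Each $v$ with $\gamma_v=1$ forces $c_v=0$, while each $v$ with $\gamma_v=3$ permits $c_v\in3^{\nu_3(m)}\Z/(3^{\nu_3(m)+1})$, an order-$3$ subgroup; enumerating these indices via Proposition~\ref{TMFBASES} yields the module $K$ and finishes the identification of $\ker\delta^1\big|_{W^{0,m}}$. The main obstacle is the combinatorial bookkeeping: the subscript pattern of the surviving $A^m_w$ and the $\gamma_v$-pattern indexing the $\Z/(3)$ summands both depend on $m\bmod 9$ through Proposition~\ref{TMFBASES}, so the two enumerations must be kept synchronized—but once $\gamma_{\LMAX}=1$ fixes the free summand, the remaining analysis is Case~1 applied separately to each block.
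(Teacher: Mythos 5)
Your proof is correct and takes essentially the same route as the paper's Case 2 argument: the matrix \eqref{MATRIX2}, the column-finite unitriangular change of basis replacing each leading $A^m_w$ by $\delta^1(C^m_v)$, and the coordinate-wise reading of the kernel and cokernel from the $\boxplus$-decomposition, including the key observation that $\gamma_{\LMAX}=1$ so that $C^m_{\LMAX}$ contributes a full $\Z/(3^{\nu_3(m)+1})$ to the kernel. (You also correctly treat the hypothesis as $m>0$; the ``$m<0$'' in the statement is evidently a typo, since the lemma sits inside Case 2 and relies on the zero column of \eqref{deltaonesecond}.)
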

\end{proof}
\begin{proof}[Case 3: $\epsilon=1$, $m\leq0$.]\let\qed\relax
By Eq.\ \eqref{deltaonethird}, the matrix representation with
respect to the bases given in Lemma \ref{deltaonesetup}(b) is
$\delta^1\bigg|_{W_{1,m}}$
identical in form to the one in (\ref{MATRIX1}), except
with $\gamma_i$ replaced by $\theta_i$
everywhere.
In this case the unit $u_0$ 
appears in the row corresponding to
$B^{m}_{\left\lfloor\frac{m-1}2\right\rfloor-2\LMAXB}$.  
Therefore, we may argue as in Case 1 to obtain
the following lemma.
\begin{lem}\label{LEMMATHREE}
For $m\leq0$,
\[
\ker\delta^1\bigg|_{W^{1,m}}=
\begin{cases}
\Z/(3)\{3D_1^m,3D_2^m,3D_4^m,3D_5^m,3D_7^m,3D_8^m,\ldots\}, & 
m\equiv1,2,3\ (9),\\
\Z/(3)\{3D_0^m;3D_2^m,3D_3^m,3D_5^m,3D_6^m,3D_8^m,3D_9^m,\ldots\}, & 
m\equiv4,5,6\ (9),\\
\Z/(3)\{3D_0^m,3D_1^m,3D_3^m,3D_4^m,3D_6^m,3D_7^m,\ldots\}, &
\text{otherwise}
\end{cases}
\]
and
\begin{align*}
\coker\delta^1\bigg|_{W^{1,m}}=
\Z/(3)\{\mathcal B\}&\oplus
\Z/(3^{\nu_3(2m+1)+1})
\left\{
B^m_0,\ldots,B^m_{\left\lfloor\frac{m-1}2
\right\rfloor-2\LMAX-1}\right\}\\
&\oplus\Z/(3^{\nu_3(2m+1)+1})\left\{
B^m_{\left\lfloor\frac{m-1}2
\right\rfloor-2\LMAX+i}:i\geq1,\rm{\, odd}
\right\}
\end{align*}
where
\[
\mathcal B:=\begin{cases}
\{\delta^1(D_1^m),\delta^1(D_2^m),\delta^1(D_4^m),
\delta^1(D_5^m),\delta^1(D_7^m),
\delta^1(D_8^m),\ldots\},&m\equiv1,2,3\ (9),\\
\{\delta^1(D_0^m);\delta^1(D_2^m),\delta^1(D_3^m),
\delta^1(D_5^m),\delta^1(D_6^m),
\delta^1(D_8^m),\delta^1(D_9^m),\ldots\},
&m\equiv4,5,6\ (9),\\
\{\delta^1(D_0^m),\delta^1(D_1^m),\delta^1(D_3^m),
\delta^1(D_4^m),\delta^1(D_6^m),
\delta^1(D_7^m),\ldots\},&\text{otherwise}.
\end{cases}
\]
\end{lem}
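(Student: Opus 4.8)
The plan is to turn $\delta^1\big|_{W^{1,m}}$ into a diagonal map and then read off its kernel and cokernel summand by summand, exactly mirroring Case 1. First I would pin down the two modules. By Remark \ref{degreecounting} every basis element $\theta_v D_v^m$ of $W^{1,m}$ lies in internal degree $t=4m+2$, so the relation $\sim_{\coker g}$ (Corollary \ref{gcor}) imposes $3^{\nu_3(4m+2)+1}=3^{\nu_3(2m+1)+1}$-torsion. Writing $N:=3^{\nu_3(2m+1)+1}$, the source is $\bigoplus_{v\geq0}\Z/N\{\theta_v D_v^m\}$. On the target side, Lemma \ref{deltaonesetup}(b) shows $\delta^1\big|_{W^{1,m}}$ lands in the span of $\{B_0^m,B_1^m,\ldots\}$, and since $B_v^m=b_{i,j}$ with $i+j=m$ we have $2i+2j+1=2m+1$, so Proposition \ref{hprop}(b) gives that each $B_v^m$ generates a copy of $\Z/N$ in $\coker h$. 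Thus $\delta^1\big|_{W^{1,m}}$ is a $\Z/N$-linear map $\bigoplus_v\Z/N\to\bigoplus_w\Z/N$, and because distinct pairs $(\epsilon,m)$ occupy disjoint summands of $\coker h$ (those indexed by $i+j=m$) it may be analysed in isolation.

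Next I would straighten the staircase by changing basis in the target. Equation \eqref{deltaonethird} gives $\delta^1(D_v^m)\doteq B^m_{w(v)}+\HOT$ with leading index $w(v)=\floor{\frac{m-1}2}-2\LMAXB+2v$ strictly increasing in $v$ and unit leading coefficient $u_v$. Replacing each hit generator $B^m_{w(v)}$ by $\delta^1(D_v^m)$ and retaining every un-hit $B_w^m$ yields a new family related to $\{B_w^m\}$ by a unit triangular transition matrix, hence another $\ZLOC$-basis of the relevant summand of $\coker h$; this is the Case 1 alternative ordered basis for the image. The key point, which makes the reduction work uniformly over $\ZLOC$ rather than merely over a field, is that I use $\delta^1(D_v^m)$ --- with unit leading coefficient --- and not $\delta^1(\theta_v D_v^m)$, so the $\theta_v$-factors play no role in absorbing the lower order terms.

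In this basis $\delta^1$ is diagonal: the generator $\theta_v D_v^m$ is sent to $\theta_v$ times the new basis vector $\delta^1(D_v^m)$, and kernel and cokernel split as a direct sum of the pieces $\Z/N\xrightarrow{\cdot\theta_v}\Z/N$ together with the un-hit $\Z/N\{B_w^m\}$. For $\theta_v=1$ the map is an isomorphism and contributes nothing to either side; for $\theta_v=3$ it has kernel and cokernel each $\cong\Z/3$, the cokernel generated by $\delta^1(D_v^m)$ and the kernel being the order-$3$ subgroup of $\Z/N\{\theta_v D_v^m\}$ generated by $3^{\nu_3(2m+1)}\,\theta_v D_v^m$ (which equals $3D_v^m$ precisely when $3\nmid 2m+1$). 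Every un-hit $B_w^m$ survives as a full $\Z/(3^{\nu_3(2m+1)+1})$ summand of the cokernel; since the first leading term sits in row $B^m_{\floor{\frac{m-1}2}-2\LMAXB}$ and leading terms occupy every other row thereafter, the un-hit generators are precisely $B^m_0,\ldots,B^m_{\floor{\frac{m-1}2}-2\LMAXB-1}$ together with the odd-offset tail $B^m_{\floor{\frac{m-1}2}-2\LMAXB+i}$, $i\geq1$ odd. Finally, feeding in the periodic pattern of Proposition \ref{TMFBASES} --- namely that $\theta_v=1$ exactly for $v$ in one residue class mod $3$, shifted according to $m\bmod9$ --- converts the conditions ``$\theta_v=3$'' into the explicit residue-class lists defining $\mathcal B$ and the kernel.

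The diagonalization itself is routine; the only delicate part is the bookkeeping of the staircase and of the $\theta_v$-pattern, i.e.\ checking that the leading terms skip exactly every other $B_w^m$ starting from row $\floor{\frac{m-1}2}-2\LMAXB$ and translating Proposition \ref{TMFBASES} into residue classes. As all of this is formally identical to Case 1 with $\gamma_i,\LMAX,\nu_3(m)$ replaced throughout by $\theta_i,\LMAXB,\nu_3(2m+1)$, I would conclude by invoking the Case 1 argument (Lemma \ref{LEMMAONE}) \emph{mutatis mutandis}.
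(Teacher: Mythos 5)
Your proposal is correct and follows essentially the same route as the paper: the paper's Case 3 argument simply observes that the matrix of $\delta^1\big|_{W^{1,m}}$ is the Case 1 staircase with $\gamma_i$ replaced by $\theta_i$ and leading unit $u_0$ in row $B^m_{\lfloor\frac{m-1}{2}\rfloor-2\LMAXB}$, then repeats the Case 1 change of basis in the image; your explicit diagonalization over $\Z/(3^{\nu_3(2m+1)+1})$ is just that argument spelled out. Your remark that the order-$3$ kernel generator is really $3^{\nu_3(2m+1)}\theta_v D_v^m$, and your use of $\LMAXB$ rather than $\LMAX$ in the cokernel indices (consistent with Eq.\ \eqref{deltaonethird}), are if anything slightly more careful than the statement as printed.
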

\end{proof}
\begin{proof}[Case 4: $\epsilon=1$, $m>0$, $m\nequiv13\, (27)$.]
\let\qed\relax
By Eq.\ \eqref{deltaonefourth},
the matrix representation in 
this case is identical in form
to the one in (\ref{MATRIX2}), except with
$\gamma_i$ replaced by $\theta_i$ everywhere.
Moreover,
$u_0$ is in the row corresponding to 
$B^{m}_{\left\lfloor\frac{m-1}2\right\rfloor
-\LMAXB}$, $u_y$
is in the row corresponding to $B^{m}_{\left\lfloor\frac{m-1}2\right\rfloor
-1}$, and $u_{y+2}$ 
is in the row corresponding to
$B^{m}_{\left\lfloor\frac{m-1}2\right\rfloor+2}$.
Therefore, we may argue as in Case 2 to obtain
the following lemma.
\begin{lem}\label{LEMMAFOUR}
For $m>0$ and $m\nequiv13\, (27)$, $\ker\delta^1\bigg|_{W^{1,m}}=
\Z/(3^{\nu_3(2m+1)+1})\{D^m_{\LMAXB}\}\oplus K'$
where
\[
K':=
\begin{cases}
\Z/(3)\{3D_1^m,3D_2^m,3D_4^m,3D_5^m,3D_7^m,3D_8^m,\ldots\}, & 
m\equiv1,2,3\ (9),\\
\Z/(3)\{3D_0^m;3D_2^m,3D_3^m,3D_5^m,3D_6^m,3D_8^m,3D_9^m,\ldots\}, & 
m\equiv4,5,6\ (9),\\
\Z/(3)\{3D_0^m,3D_1^m,3D_3^m,3D_4^m,3D_6^m,3D_7^m,\ldots\}, &
\text{otherwise}
\end{cases}
\]
and 
\begin{align*}
\coker\delta^1\bigg|_{W^{1,m}}=
\Z/(3)\{\mathcal B\}&\oplus
\Z/(3^{\nu_3(2m+1)+1})
\left\{
B^m_0,\ldots,B^m_{\left\lfloor\frac{m-1}2
\right\rfloor-\LMAX-1};B^m_{\floor{\frac{m-1}2}}\right\}\\
&\oplus\Z/(3^{\nu_3(2m+1)+1})\left\{
B^m_{\left\lfloor\frac{m-1}2
\right\rfloor+i}:i\geq1,\rm{\, odd}
\right\}
\end{align*}
where $\mathcal B$ is defined as in Lemma \ref{LEMMATHREE}.
\end{lem}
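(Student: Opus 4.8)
The plan is to deduce this case from the linear algebra already carried out in Case~2, since by Eq.~\eqref{deltaonefourth} and Lemma~\ref{deltaonesetup}(b) the matrix of $\delta^1\big|_{W^{1,m}}$, taken with respect to the source basis $\{\theta_0D_0^m,\theta_1D_1^m,\ldots\}$ and the target basis $\{B_0^m,B_1^m,\ldots\}\subset\coker h$, has exactly the shape of Eq.~\eqref{MATRIX2} with every $\gamma_i$ replaced by $\theta_i$ and every $A_w^m$ replaced by $B_w^m$. The single fact that licenses this reduction is that the column indexed by $D^m_{\LMAXB}$ is genuinely a zero column: this is precisely where the hypothesis $m\nequiv13\ (27)$ enters, forcing the entry $*=\delta^1(D^m_{\LMAXB})$ of Eq.~\eqref{deltaonefourth} to vanish. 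With that in place, the unit pivots $u_0,u_y,u_{y+2}$ occupy the rows of $B^m_{\floor{\frac{m-1}2}-\LMAXB}$, $B^m_{\floor{\frac{m-1}2}-1}$, and $B^m_{\floor{\frac{m-1}2}+2}$ respectively, so the leading terms climb the target basis in the same staircase pattern as in Case~2.

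Next I would pass to the normal form by constructing the image basis used in Case~2, now relabelled:
\begin{align*}
\{B_0^m,\ldots,B^m_{\floor{\frac{m-1}2}-\LMAXB-1};\
&\delta^1(D_0^m),\ldots,\delta^1(D^m_{\LMAXB-1});\
B^m_{\floor{\frac{m-1}2}};\\
&B^m_{\floor{\frac{m-1}2}+1},\delta^1(D^m_{\LMAXB+1}),
B^m_{\floor{\frac{m-1}2}+3},\delta^1(D^m_{\LMAXB+2}),\ldots\},
\end{align*}
obtained by using each unit pivot to trade the target generator $B^m_{w}$ it occupies for the image vector $\delta^1(D_v^m)$. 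Reading the kernel off this form, the zero column shows that the whole cyclic summand generated by $D^m_{\LMAXB}$ lies in $\ker\delta^1\big|_{W^{1,m}}$; since $\theta_{\LMAXB}=1$ and $\deg_t(D^m_{\LMAXB})=4m+2$ (Remark~\ref{degreecounting}), Corollary~\ref{gcor} gives this summand as $\Z/(3^{\nu_3(2m+1)+1})$, using $\nu_3(4m+2)=\nu_3(2m+1)$. Each remaining column with $\theta_v=3$ has leading coefficient $3u_v$, hence is non-injective on its summand and contributes a single copy of $\Z/(3)$; enumerating the indices $v$ with $\theta_v=3$ via Proposition~\ref{TMFBASES} and reducing modulo~$9$ produces the three displayed alternatives for $K'$.

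For the cokernel I would sort the target generators by the same data. Those $B_w^m$ whose index is never a leading term, together with the generator $B^m_{\floor{\frac{m-1}2}}$ left uncovered because the $D^m_{\LMAXB}$ column vanishes, survive as full copies of $\Z/(3^{\nu_3(2m+1)+1})$; these account for the two $\Z/(3^{\nu_3(2m+1)+1})$ summands. The generators hit by a $\theta_v=1$ column are eliminated outright, while those hit by a $\theta_v=3$ column survive only modulo~$3$ and assemble into the summand $\Z/(3)\{\mathcal B\}$, with $\mathcal B$ indexed exactly as in Lemma~\ref{LEMMATHREE}. Collecting these pieces gives the asserted forms of $\ker\delta^1\big|_{W^{1,m}}$ and $\coker\delta^1\big|_{W^{1,m}}$.

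The only genuinely delicate point---and the one that separates this case from Case~5---is the vanishing $\delta^1(D^m_{\LMAXB})=0$ under $m\nequiv13\ (27)$; I expect this to be the main obstacle, as it rests on the explicit leading-term computation imported through Lemma~\ref{deltaonesetup}(b) from (\cite{Larson:ANSS}, Proposition~9). Everything downstream is the mechanical transcription of the Case~2 staircase argument, with $\gamma\mapsto\theta$, $A\mapsto B$, and $\nu_3(m)\mapsto\nu_3(2m+1)$; the one spot that rewards care is the double role of the $\theta_v=3$ columns, which feed a $\Z/(3)$ into both the kernel and the cokernel.
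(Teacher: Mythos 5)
Your proposal is correct and takes essentially the same route as the paper's own (very terse) argument for this case: identify the matrix of $\delta^1\big|_{W^{1,m}}$ with the Case~2 matrix \eqref{MATRIX2} under $\gamma_i\mapsto\theta_i$, $A\mapsto B$, use the hypothesis $m\nequiv13\ (27)$ to make the $D^m_{\LMAXB}$ column vanish via Eq.~\eqref{deltaonefourth}, and then rerun the Case~2 staircase/alternative-image-basis argument with the torsion order $\nu_3(m)+1$ replaced by $\nu_3(2m+1)+1$. Your write-up in fact supplies more of the "argue as in Case~2" details (the relabelled image basis, the double role of the $\theta_v=3$ columns) than the paper does.
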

\end{proof}
\begin{proof}[Case 5: $\epsilon=1$, $m>0$, $m\equiv13\, (27)$.]
\let\qed\relax
In this final case, Eq.\ \eqref{deltaonefourth}
implies that the matrix
representation is identical in form to
the one in Case 4, 
except that the column
in bold is no longer a 
column of zeros.  Rather, 
by (\cite{Larson:ANSS}, Lemma 12(a)) it has at least one
nonzero entry in and above the row
containing $u_y$. 
This yields the following lemma.
\begin{lem}\label{LEMMAFIVE}
Suppose $m>0$ and $m\equiv13\, (27)$.
\begin{enumerate}[(a)]
\item The cokernel of $\delta^1\bigg|_{W^{1,m}}$ 
has the same
presentation as in Case 4,
but now including a single nontrivial relation
\begin{equation}\label{maintheoremrelation}
\delta^1(D^m_{\LMAXB})=0
\end{equation}
among its generators.
\item The kernel of $\delta^1\bigg|_{W^{1,m}}$
has, as a direct summand, 
\[
K'':=\Z/(3)\{3D^m_{\LMAXB+1},3D^m_{\LMAXB+2},
3D^m_{\LMAXB+4},3D^m_{\LMAXB+5},
3D^m_{\LMAXB+7},3D^m_{\LMAXB+8},
\ldots
\}.
\]
\end{enumerate}
\end{lem}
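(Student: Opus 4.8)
The plan is to exploit the observation made just before the lemma: the matrix for $\delta^1\big|_{W^{1,m}}$ in this case agrees with the Case 4 matrix (Eq.~\eqref{MATRIX2}, with $\gamma_i$ replaced by $\theta_i$) in every column except the bold one, which represents $D^m_{\LMAXB}$ and is now nonzero. By (\cite{Larson:ANSS}, Lemma 12(a)) its nonzero entries lie in and above the row containing $u_y$, i.e.\ among the rows $B^m_w$ with $w\le\floor{\frac{m-1}2}-1$. So the only change from Case 4 is the adjunction of this single column, and I would treat both the cokernel and the kernel as controlled perturbations of the Case 4 computation recorded in Lemma \ref{LEMMAFOUR}.

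For part (a), I would note that adjoining one column adjoins exactly one generator, $\delta^1(D^m_{\LMAXB})$, to the image, so the Case 5 cokernel is the Case 4 cokernel modulo the class of this element. Reducing that class against the staircase formed by the remaining columns rewrites it as a combination of the surviving Case 4 generators; since its nonzero entries sit in rows $w\le\floor{\frac{m-1}2}-1$, the reduction lands among the low-row summands $B^m_0,\dots$ and the classes $\mathcal B$, leaving $B^m_{\floor{\frac{m-1}2}}$ untouched. By (\cite{Larson:ANSS}, Lemma 12(a)) the reduced class is nonzero, so it imposes precisely one nontrivial relation, namely \eqref{maintheoremrelation}, giving the asserted presentation.

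For part (b), I would isolate the second block of columns (those indexed $v>\LMAXB$), which is identical in Cases 4 and 5. By Eq.~\eqref{deltaonefourth} the leading terms of these columns occupy strictly increasing rows $B^m_w$ with $w\ge\floor{\frac{m-1}2}+1$, all strictly above every row reached by the first block and the bold column (which are confined to $w\le\floor{\frac{m-1}2}$). I would use this row separation to detect a kernel class on the second block exactly as in Case 4: projecting the target onto the span of $\{B^m_w:w\ge\floor{\frac{m-1}2}+1\}$ annihilates the first block and the bold column while preserving the second-block leading terms, and peeling columns off the top of that staircase shows that each $\theta_v=3$ column with $v>\LMAXB$ contributes an order-$3$ class. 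These assemble into $K''$, and the triangularity of the staircase exhibits $K''$ as a direct summand of $\ker\delta^1\big|_{W^{1,m}}$, independent of the perturbation.

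The delicate point is part (b). Because the second-block columns carry lower order terms that descend into the rows shared with the first block and the now-nonzero bold column, one cannot simply read the kernel off the second block in isolation; the crux is that these lower order terms can be absorbed by the staircase without disturbing the order-$3$ classes. Equivalently, I must show that the row separation of the leading terms genuinely decouples the second-block kernel from the first-block/bold-column perturbation responsible for the undetermined module $U^*$. Establishing this clean splitting, rather than the bookkeeping of the two staircases, is where the real work lies.
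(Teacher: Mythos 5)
Your proposal follows the paper's own argument: both treat the Case 5 matrix as the Case 4 matrix with the single bold column for $D^m_{\LMAXB}$ made nonzero, and both use (\cite{Larson:ANSS}, Lemma 12(a)) to confine that column to the rows $B^m_w$ with $w\leq\floor{\frac{m-1}{2}}-1$. Part (a) is handled exactly as in the paper: one new generator of the image gives one new relation, supported away from $B^m_{\floor{\frac{m-1}{2}}}$ and the high-index generators. Note only that ``nontrivial'' in the lemma means that $\delta^1(D^m_{\LMAXB})$ is a nonzero linear combination of the $B^m_w$, which is literally what Lemma 12(a) supplies; you do not need (and Lemma 12(a) does not obviously give) the stronger claim that its class in the Case 4 cokernel is nonzero.

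For part (b), the point you flag as ``where the real work lies'' is indeed the only substantive content of the statement, and the paper is no more explicit than you are --- it reads the lemma directly off the matrix. You can, however, close most of the gap more cheaply than your projection argument suggests. In Case 4 the $D^m_{\LMAXB}$-column is identically zero, so the back-substitution producing the order-$3$ kernel classes attached to the columns $\theta_vD^m_v$ with $v>\LMAXB$ and $\theta_v=3$ never needs to use $D^m_{\LMAXB}$; those classes therefore involve only coordinates on which $\delta^1$ is unchanged from Case 4, and hence lie in the Case 5 kernel verbatim --- there is no ``perturbation to absorb.'' What still requires an argument, and what neither you nor the paper writes down, is that these classes remain a \emph{direct summand} (equivalently, that their span meets $3\cdot\ker\delta^1$ trivially) once the new kernel elements involving $D^m_{\LMAXB}$ appear; this is precisely the indeterminacy the paper quarantines in $U^{4m+2}$ via Definition \ref{indeterminacy}, and it ultimately rests on the finer $3$-divisibility of the lower order terms imported from \cite{Larson:ANSS}. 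So your route is the paper's route, and the residual delicacy you identify is real but is shared by the source.
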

\begin{defn}\label{indeterminacy}
The graded module $U^*$ in Theorem \ref{main}
is determined by the direct sum decomposition
\[
\ker\delta^1\bigg|_{W^{1,m}}=K''\oplus U^{4m+2}.
\]
\end{defn}
We do not have sufficient knowledge
of the coefficients in the columns of the
matrix in Eq.\ \eqref{MATRIX2}
corresponding to $D^m_0,\ldots,D^m_{\LMAXB}$
(e.g., their 3-divisibility)
to explicitly compute $U$.
\end{proof}
\begin{proof}[Proof of Theorem \ref{cohomology}]
Propositions \ref{filtration} and \ref{deltazeroresults}
yield the result for 
$H^0\mathcal G_*^{\bullet}$.
Proposition \ref{deltazeroresults} 
also implies that
$\coker\delta^0$ is 
concentrated in $t$-degree zero,
while Proposition \ref{deltaoneintro} implies
that $\ker\delta^1$ is a free $\ZLOC$-module
of rank 1 generated by $1_{\pi_*TMF_{(3)}}$.
Thus, the short exact sequence
(\ref{hone}) splits as claimed in Proposition 
\ref{filtration}, which means the result
for $H^1C^*$ follows from
Proposition \ref{deltazeroresults} combined with Lemmas
\ref{LEMMAONE}, \ref{LEMMATWO}, \ref{LEMMATHREE},
\ref{LEMMAFOUR}, and \ref{LEMMAFIVE}(b).
Finally, the result for $H^2C^*$
follows from Proposition \ref{filtration},
Lemmas \ref{LEMMAONE} --
\ref{LEMMAFOUR} and \ref{LEMMAFIVE}(a),
and Definition \ref{indeterminacy}.
\end{proof}

\section{Higher differentials in the BKSS}
\label{higherdiffs}
In this final section we briefly examine
the anatomy of the differentials on
the $E_r$-page of the BKSS for $\QTWO$
for $r\geq2$.  

A schematic diagram of $\,_{BK}E_2^{s,t}\QTWO$
is shown in Figure \ref{ETWO},
with $s$ along the vertical axis
and $t-s$ along the horizontal axis.
Included in the diagram is an
example of a possibly nontrivial $d_2$-differential
$\,_{BK}E_2^{0,8}\QTWO\to
\,_{BK}E_2^{2,7}\QTWO$.  In fact, since
the only nontrivial rows in the diagram
are those corresponding to $s=0$, $s=1$,
and $s=2$ by Theorem \ref{main}, the only
possibly nontrivial $d_2$-differentials
are those that map from the 0-line
to the 2-line.
\begin{figure}[h!]
\centering
\begin{sseq}[entrysize=8mm,ylabelstep=2,xlabelstep=4]{-4...12}
{0...4}
\ssdrop{\longleftarrow\,_{BK}E^{0,*}_2\QTWO\longrightarrow}
\ssmove{0}{1}
\ssdrop{\longleftarrow\,_{BK}E^{1,*}_2\QTWO\longrightarrow}
\ssmove{0}{1}
\ssdrop{\longleftarrow\,_{BK}E^{2,*}_2\QTWO\longrightarrow}
\ssmove{0}{1}
\ssdrop{\longleftarrow0\longrightarrow}
\ssmove{0}{1}
\ssdrop{\vdots}
\ssmoveto{8}{0}
\ssdropbull
\ssmove{-3}{2}
\ssdropbull
\ssstroke[arrowto]
\end{sseq}
\caption{The $E_2$-term of
the BKSS for $\QTWO$.}
\label{ETWO}
\end{figure}
Moreover, by sparseness, 
all $d_2$-differentials mapping into or
out of the 1-line are identically zero,
as are all $d_r$-differentials
for $r\geq3$.
The following theorem summarizes the situation.
\begin{thm}
In the BKSS for $\QTWO$,
\begin{enumerate}[(a)]
\item The only possibly nontrivial $d_2$-differentials
have the form
\[
d_2:\,_{BK}E_2^{0,t}\QTWO\to
\,_{BK}E_2^{2,t-1}\QTWO,
\]
\item All elements in $\,_{BK}E_2^{1,*}\QTWO$
are permanent cycles,
\item The spectral sequence collapses
at $E_3$, i.e., $\,_{BK}E_3^{s,t}\QTWO=
\,_{BK}E_{\infty}^{s,t}\QTWO$.
\end{enumerate}
\end{thm}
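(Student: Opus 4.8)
The plan is to deduce the entire statement from the sparseness of the $E_2$-page recorded in Theorem~\ref{main}, namely that $\,_{BK}E_2^{s,t}\QTWO=0$ for all $s\geq3$ (and trivially for $s<0$), so that only the three rows $s=0,1,2$ can be nonzero. The one external input needed is the standard shape of the Bousfield-Kan differentials: on the homotopy spectral sequence of the cosimplicial spectrum $\QTWO^{\bullet}$ set up in Section~\ref{ANSS_SECTION}, the $r$-th differential $d_r$ raises the cosimplicial degree $s$ by exactly $r$, mapping $\,_{BK}E_r^{s,t}\QTWO$ into a group in cohomological degree $s+r$. First I would record this fact from the general Bousfield-Kan framework; everything else is a degree count against the vanishing of the rows $s\geq3$.

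For part~(c), fix $r\geq3$ and any class in cohomological degree $s\geq0$. Its image under $d_r$ lies in degree $s+r\geq3$, where every page vanishes, since each $E_r$-page is a subquotient of $E_2$ and $\,_{BK}E_2^{s+r,*}\QTWO=0$ by Theorem~\ref{main}. Hence $d_r=0$ identically for every $r\geq3$, which gives $\,_{BK}E_3^{s,t}\QTWO=\,_{BK}E_\infty^{s,t}\QTWO$ and proves collapse at $E_3$.

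For part~(a), the same count applied to $r=2$ shows that a nonzero $d_2$ requires both its source in some degree $s$ and its target in degree $s+2\leq2$, forcing $s=0$; thus the only differentials that can be nonzero are those of the form $d_2:\,_{BK}E_2^{0,t}\QTWO\to\,_{BK}E_2^{2,t-1}\QTWO$, emanating from the $0$-line and landing on the $2$-line. For part~(b), a class on the $1$-line can support no $d_2$ (its target would lie in degree $3$) and can receive no $d_2$ (a source would lie in degree $-1$); combined with the vanishing of all $d_r$ for $r\geq3$ from part~(c), every element of $\,_{BK}E_2^{1,*}\QTWO$ is a permanent cycle.

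There is no real analytic difficulty here: once Theorem~\ref{main} supplies the vanishing of $\,_{BK}E_2^{s,t}\QTWO$ for $s\geq3$, the argument is purely formal. The only points meriting care are confirming that the Bousfield-Kan differentials shift $s$ by $r$ in the convention used for $\QTWO$---so that the three nonzero rows genuinely constrain all higher differentials---and noting convergence of the BKSS if one wants $\,_{BK}E_3=\,_{BK}E_\infty$ to be read homotopically. Neither presents a genuine obstacle, so essentially all of the work has already been absorbed into the proof of Theorem~\ref{main}.
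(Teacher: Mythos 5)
Your argument is correct and is essentially identical to the paper's: the paper likewise deduces all three parts purely from the sparseness $\,_{BK}E_2^{s,t}\QTWO=0$ for $s\geq3$ (Theorem \ref{main}) together with the fact that $d_r$ raises cosimplicial degree by $r$. Nothing further is needed.
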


\footnotesize
\bibliographystyle{plain}
\bibliography{math}

\end{document}